\newtheorem{theorem}{Theorem}
\newtheorem{corollary}[theorem]{Corollary}
\newtheorem{definition}[theorem]{Definition}
\newtheorem{example}[theorem]{Example}
\newtheorem{lemma}[theorem]{Lemma}
\newtheorem{proposition}[theorem]{Proposition}
\newtheorem{remark}[theorem]{Remark}
\newtheorem{convention}{Convention}
\begin{document}
\title[Fr\'echet Generalized Trajectories and Minimizers]{Fr\'echet Generalized Trajectories and Minimizers for Variational Problems of Low Coercivity}%
\author[Manuel Guerra \and Andrey
 Sarychev]{Manuel Guerra$^1$ \and Andrey
 Sarychev$^2$}%
\address{$^1$ISEG, University of Lisbon, and CEMAPRE, Portugal,
\newline
$^2$ DiMaI, University of Florence, Italy}%
\email{mguerra@iseg.utl.pt,asarychev@unifi.it}%

\date{}%

\begin{abstract}
We address consecutively two problems. First we introduce a class of so called Fr\'echet generalized controls for a
multi-input control-affine system with non-commuting controlled vector fields.
For each control of the class one is able to define a unique generalized trajectory,and the input-to-trajectory map turns out to be continuous with respect to the Fr\'echet metric.
On the other side, the class of generalized controls is broad enough to settle the second problem, which is proving existence of generalized minimizers of Lagrange variational problem
with functionals of low (in particular linear) growth. Besides we study possibility of Lavrentiev-type gap between the infima of the functionals in the spaces of ordinary and generalized controls.
\end{abstract}
\maketitle

\section{Introduction}
\label{S introduction}

\subsection{Lagrange optimal control problem: classical setting}
\label{SS classical setting}
Consider a Lagrange optimal control problem with control-affine dynamics:
\begin{align}
J(x,u) = & \int_0^1 L(x(t), u (t))   \, dt \rightarrow \min, \label{lag} \\
\notag
\dot x (t) = & f(x(t)) + \sum \limits _{i
=1}^kg_i(x(t))u_i(t)=\\
\label{affine system}
= & f(x(t))+G(x(t))u(t) \qquad \text{a.e. } t \in [0,1], \\
\label{boundary conditions}
 x(0)=&x^0, \qquad x(1) = x^1 .
\end{align}
We assume the vector fields $f$, $g_i, \ i=1,2,\ldots , k$ to be  locally Lipschitz
in $\mathbb R^n$, and the function $(x,u) \mapsto L(x,u)$ to be continuous in $\mathbb R^{n+k}$, and convex with respect to $u$.

Regarding {\it existence of minimizers} for this problem, the  classical approach, pioneered by L.Tonelli and D.Hilbert more  than a century ago, (see monograph \cite{Cesa} for historical remarks and bibliography) introduced the following assumptions for the  Lagrangian $L$
\begin{itemize}
\item[{\bf A)}]
convexity of the Lagrangian  with respect to $u$ for each fixed $x$;
\item[{\bf B)}]
boundedness of the Lagrangian from below and {\it superlinear  growth} of  the Lagrangian as $|u| \to \infty$.
\end{itemize}
Besides, one must require existence of  an admissible  trajectory of  the controlled dynamics  \eqref{affine system} satisfying the boundary conditions  \eqref{boundary conditions}.

These  assumptions guarantee existence of a minimizing control
$\tilde{u}(t) \in L_1^k[0,1]$, which we call {\it ordinary minimizing control}.

It is well known, at least since the work of L.C.Young in the 1930's, that without convexity of $L$ in $u$, ordinary minimizing controls of the Lagrange problem  may cease to exist and the minimum  can be  achieved by so-called {\it relaxed controls}.
By now, a rich theory  of relaxed controls is developed (see \cite{Wa}).

We will not deal with relaxed controls, {\bf assuming below the convexity assumption (A) to hold}.
Instead, we will {\it weaken the condition of superlinear growth} of the Lagrangian,  as $|u| \to \infty$.

\subsection{Weakening growth assumption and generalized minimizers}
\label{SS linear growth}
If instead of  superlinear growth assumption {\bf (B)} we assume  merely
{\it linear growth}
\begin{itemize}
\item[{\bf B$_\ell$)}]$ \ L(x,u)\geq a+ b|u|, \qquad a \in \mathbb{R}, \ b >0$,
\end{itemize}
then existence of ordinary minimizer
for the problem \eqref{lag}--\eqref{boundary conditions} may cease, as the following simple example shows.
\begin{example}[transfer with minimal fuel consumption]
Consider the optimal control problem
\begin{align}
& J(u) = \int_0^1 | u (t)| dt \rightarrow \min, \\
& \dot x (t) = x(t) + u(t), \
  \  u \in \mathbb{R},  \ x(0)=0, \ x(1)=e, \label{km}
\end{align}
which describes transfer of a point  on a line, with the minimized cost, seen as fuel consumption for such transfer.

From the differential system  and the boundary conditions we get
$$e=x(1)=e\int_0^1e^{-\tau}u(\tau)d\tau$$
and then for any control $u(\cdot) \in L_1[0,1]$, compatible with the boundary condition \eqref{km},
$$\int_0^1|u(\tau)|d\tau >   \left|\int_0^1e^{-\tau}u(\tau)d\tau \right|=1. $$
On the other side for a  sequence of needle-like controls
$$u_i(\tau)=\bar{u}_i\chi_{[0,1/i]}(\tau), \qquad  \bar{u}_i=1/(1-e^{-1/i}),$$
which are compatible with the boundary condition,
there holds $J(u_i) \to 1$ as $i \to \infty $.

It is easy to see that the sequence $\{u_i\}$ converges in $W_{-1,1}$-norm to the Dirac measure or optimal {\it impulsive generalized control} $\tilde{u}=\delta(\tau)$;
the corresponding generalized trajectory is a discontinuous function $\tilde{x}(\tau)=e,  \forall \tau >0$, with $\tilde{x}(0)=0. \ \square$
\end{example}

If the Lagrangian $L$ has linear growth with respect to control, then each  sequence of minimizing controls $\{u_i\}_{i \in \mathbb N}$ will be bounded in $L_1$-norm.
If the fields $f, g_1, \ldots g_k$ have linear or sublinear growth with respect to the state variables, then the corresponding sequence of trajectories $\{x_{u_i} \}_{i \in \mathbb N}$ is bounded in total variation.
Helly's selection theorem \cite{Durrett} guarantees that there is a function $x:[0,1] \mapsto \mathbb R^n$ of bounded variation (not necessarily continuous),  such that $x_{u_i}$ converges pointwise to $x$ at every point of continuity of $x$.
It is reasonable to conjecture that there is a space of {\it generalized trajectories} including  discontinuous curves, and a space of {\it generalized controls} including impulses, for which  the problem \eqref{lag}--\eqref{boundary conditions} admits a solution.

\subsection{Generalized and impulsive controls in involutive and non-involutive cases}
\label{SS involutive case}
Study of optimal impulsive controls for linear systems has been initiated in the 1950's, particularly for  applications in spacecraft dynamics.
Later, a more general {\it nonlinear theory} has been developed;  it  englobes the problem \eqref{lag}--\eqref{boundary conditions}
for the cases, where the controlled vector fields $\{g^1, \ldots , g^k\}$ in \eqref{affine system} form an {\it involutive system}.

It turns out that in such cases  one can provide the space of 'ordinary',   controls  $u(\cdot)$ (say
 $L_{1}^{k}[0,T]$) and of the trajectories $x(\cdot)$ with
 weak topologies,  for which one can still guarantee uniform continuity of  the {\it input-to-trajectory map} $u(\cdot)
\mapsto x(\cdot)$. Then one can extend this map by continuity onto the topological completion of the
space of controls, which contains distributions.

Results obtained for {\it nonlinear} control systems by  this approach since the 1970's,  can be found in
\cite{Br87,KrPo,Or,Sa88}.
In particular the method allows to extend the input-to-trajectory map onto the space $W_{-1,\infty}$ of generalized derivatives of measurable essentially bounded functions, with generalized trajectories belonging to $L_\infty$.
Some representation formulae for the generalized trajectories via the generalized primitives of the inputs can be found in \cite{Sa88}.

In the linear-quadratic case, this approach allows for the extension of the input-to-trajectory map \emph{and} the cost functional. Indeed, linear-quadratic Lagrange problems admit a generalized minimizer in some Sobolev space of sufficiently large negative index, provided the boundary conditions can be satisfied and the quadratic functional is bounded from below \cite{Guerra00,ZavalishchinSesekin}.

Problems with the continuous extension of the input-to-trajectory map which arise in the non-involutive  case have been identified in the 1950's (see \cite{Kuzw}).
It has been proved in \cite{KrPo}  that  involutivity of the system of controlled vector fields is necessary for continuity of the map in the weak topology -- a property coined  in \cite{KrPo}  as {\it vibrocorrectness}.

To see, why vibrocorrectness fails in the non-involutive case, look at the following simple example.
\begin{example}
\label{Ex noninvolutive system}
Consider the system
\[
\dot{x}_1=u_1, \ \ \dot{x}_2=u_2, \ \ \dot{x}_3=x_2u_1, \qquad x(0)=(0,0,0) ,
\]
and three bi-dimensional controls, which are  concatenations of needles:
\begin{align*}
&
u^{1,\varepsilon} (t) = \left( \frac 1 \varepsilon \chi_{[0,\varepsilon]}(t), \frac 1 \varepsilon \chi_{[\varepsilon, 2 \varepsilon ]}(t)\right),
\\ &
u^{2,\varepsilon} (t) = \left( \frac 1 \varepsilon \chi_{[0,\varepsilon]}(t),\frac 1 \varepsilon \chi_{[0,\varepsilon ]}(t) \right),
\\ &
u^{3,\varepsilon} (t) = \left( \frac 1 \varepsilon \chi_{[\varepsilon,2\varepsilon]}(t),\frac 1 \varepsilon \chi_{[0,\varepsilon ]}(t) \right).
\end{align*}
For $\varepsilon \rightarrow 0^+$, all the   concatenations tend in $W^2_{-1,1}$ to the bi-dimensional impulsive control
$u(t) = (\delta (t), \delta(t) )$,
while the corresponding trajectories converge pointwise to different discontinuous curves with $x(0^+)=(1,1,0)$, $x(0^+) = \left( 1,1, \frac 1 2 \right)$, and $x(0^+) =(1,1,1)$ respectively. $\square$
\end{example}
Thus, in the noninvolutive case an extension of input-to-trajectory map onto classical spaces of distributions and/or  Sobolev spaces of negative order seems to be impossible.

One approach to the study of noninvolutive systems with impulsive controls proceeds by construction of an appropriate Lie extension of the original system \cite{BressanRampazzo94,Jurdjevic}. The extension is a new system such that: {\it (i)} the extended system of  controlled fields is involutive, {\it (ii)} all the trajectories of the original system are trajectories of the new system, and {\it (iii)} the  trajectories of the extended system can be approximated by trajectories of the original system.
This reduces the noninvolutive case to the involutive and, after some further transformation, to the commutative case.
However, any relation between controls of the extended system and controls of the original system is indirect.

An alternative approach providing a unique extension of the input-to-trajectory map is one of the main issues treated  in  this contribution.

\subsection{Time-reparametrization and "graph completion" techniques in the noncommutative case}
\label{SS noninvolutive case}

For the noncommutative case, a different approach has been adopted.
It is based on a technique of time reparametrization introduced by R.W. Rischel \cite{Rischel65} and J. Warga \cite{Warga65}, and further developed by other authors
\cite{ArutyunovKaramzinPereira10,ArutyunovKaramzinPereira12,
BressanRampazzo88,DykhtaSamsonyuk09,MiRu,MottaRampazzo96,PereiraSilva00,SilvaVinter96,Wa,WargaZhu94}. For a detailed monography and further references, see \cite{MiRu}.
The approach proceeds by introducing a new independent variable with respect to which the trajectories become absolutely continuous.
This creates an auxiliary control system which includes time as an additional state variable.

Several authors  \cite{ArutyunovKaramzinPereira10,ArutyunovKaramzinPereira12,
DykhtaSamsonyuk09,MiRu,MottaRampazzo96,PereiraSilva00,SilvaVinter96}  use the auxiliary system to obtain representations of generalized solutions of \eqref{affine system} by solutions of systems having Radon measures as generalized controls and (right-continuous) functions of bounded variation as generalized trajectories.
The definitions introduced  have a 'sequential form': couples $(x(\cdot), U(\cdot))$ of functions of bounded variation, which are correspondingly the generalized trajectory and the  primitive of generalized control are weak$^*$ limits in {\bf BV} of couples $\left( x_n(\cdot), U_n(\cdot) \right)$ of classical trajectories $x_n$ and primitives $U_n$ of classical controls $u_n$ which generate  $x_n(\cdot)$, with $\sup\limits_n\|u_n(\cdot)\|_{L_1} < \infty$. It is known that in the scope of this approach for the same $U$, different sequences $x_n(\cdot)$, driven by different $U_n$  may converge to different limits,
i.e., each generalized input defines a 'funnel' of generalized trajectories, rather than a well defined unique trajectory.

A different line of argument has been  followed in \cite{BressanRampazzo88}.
Any function $x:[0,1] \mapsto \mathbb R^n$ can be  identified with its {\it graph}, that is the set $\Gamma_x = \left\{ (t,x(t)): t \in [0,1] \right\} \subset \mathbb R^{1+n}$.
If the function $x$ is not continuous, then its graph is not connected. However, if the total variation of $x$ is finite, then there is a {\it graph completion} of $\Gamma_x$ which is connected.
In \cite{BressanRampazzo88}, each control $u \in L_1^k[0,1]$ is identified with the  graph of its primitive $U(t)= \int_0^tu(\tau) d \tau $.
The spaces of generalized controls and generalized trajectories are spaces of graph completions of functions of bounded variation.

The input-to-trajectory map is shown to be continuous over sets of generalized controls equibounded in variation provided with an appropriate metric, into the space of generalized trajectories provided with the Hausdorff metric over the graph completions of generalized primitives.

\subsection{Fr\'echet curves approach to the noncommutative case}
\label{SS new approach}

In what regards the construction of generalized inputs and trajectories, our approach is rather close to the one of  \cite{BressanRampazzo88}.
It is easy to  observe that the graph completions introduced in \cite{BressanRampazzo88} are Fr\'echet curves \cite{Frechet08,Leoni09}, and the metric introduced in the space of generalized controls is the classical Fr\'echet metric.

We prove a stronger version of the main result in \cite{BressanRampazzo88}: the input-to-trajectory map is continuous with respect to a strengthened Fr\'echet metric in both the domain and the image.
Notice that the Fr\'echet metric is topologically stronger than the Hausdorff metric.
Since ordinary controls are densely embedded in the space of Fr\'echet curves, this proves existence and uniqueness of a continuous extension of the input-to-trajectory map into the space of generalized controls.
This map admits a simple representation in the form of an input-to-trajectory map of an equivalent {\it auxiliary system}.
\subsection{Fr\'echet generalized minimizers for Lagrange problems with functionals of linear growth}
\label{SS extended Lagrange problem}
This continuity result together with the representations of generalized trajectories contributes to  proper
extension of the cost functional \eqref{lag} onto the space of Fr\'echet generalized controls.
Thus, we extend the Lagrange variational problem \eqref{lag}--\eqref{boundary conditions} onto the class of Fr\'echet generalized controls and trajectories so that
\begin{itemize}
\item
the cost functional \eqref{lag} is lower semicontinuous in the space of Fr\'echet generalized controls and, under linear growth assumption for the integrand, the problem possesses a Fr\'echet generalized minimizer;
\item
there may exist a (Lavrentiev-type) gap  between the infimum of the cost functional in $L_1^k[0,1]$ and the infimum in the space of generalized controls;
\item
one can formulate regularity conditions which preclude occurrence of the gap.
\end{itemize}

We do not claim finding the weakest topology in the space of controls, which provides continuity of input-to-trajectory map.
In fact, the study in \cite{LiuSussmann99} indicates that, under lack of  {\it involutivity}, the weakest topology should depend on the structure of the Lie algebra generated by the  the vector fields $f, g_1, \ldots, g_k$.
The topology, we introduce does not depend on it.
However, it allows for a proper extension of Lagrange variational problems onto a set of generalized controls, which is broad  enough to guarantee existence of generalized minimizers for integral functionals of low (in particular of linear) growth.

\subsection{Structure of the paper}
\label{SS structure of the paper}
This paper is organized as follows.
In Section \ref{S Generalized trajectories and generalized controls}, we discuss the spaces of Fr\'echet curves and their topologies. We prove the {\it continuous canonical selection} theorem (Theorem \ref{T continuity Frechet to W}), and introduce the spaces of generalized controls and generalized trajectories.
Section \ref{S input-to-trajectory map} deals with the definition of the generalized input-to-trajectory map.
Section \ref{S Problem reduction} discusses the auxiliary problem.
In Section \ref{S the cost functional}, we discuss the extension of the cost functional and its properties.
Existence of minimizers for the extended problems is settled in Section \ref{S existence}.
Possible occurrence  of a Lavrentiev gap is discussed in Section \ref{S Lavrentiev phenomenon}.
In Section \ref{S Example} we present an example of a problem with an integrand of linear growth, whose minimizers are all generalized.
The proofs of some technical results are collected in the appendix (Section \ref{S appendix}).

\section{Fr\'echet generalized controls and generalized paths}
\label{S Generalized trajectories and generalized controls}

The goal of this Section is to introduce the spaces of generalized controls and generalized paths.
Subsection \ref{SS Frechet curves} contains definitions and some basic facts about Fr\'echet curves.
Subsection \ref{SS Continuity of canonical selector} contains the key theorem of {\it continuous canonical selection} (Theorem \ref{T continuity Frechet to W}).
Subsection \ref{SS Frechet curves in space-time}, specialises on Fr\'echet curves defined in space-time.
In Subsections \ref{SS Generalized controls} and \ref{SS Generalized trajectories}, we give definitions of what we call the spaces of {\it Fr\'echet generalized controls and generalized paths}.

\subsection{Fr\'echet curves}
\label{SS Frechet curves}
Various slightly different definitions of Fr\'echet curves can be found in the literature  \cite{Frechet08,Leoni09}.
In this paper we consider curves that are rectifiable and oriented.
Such curves admit absolutely continuous parameterizations, which is a natural requirement when dealing with ordinary differential equations.
We allow Fr\'echet curves to be parameterized by non-compact intervals, which is a convenient way to account for solutions of \eqref{affine system} with a blow up time in the interval $[0,1]$.

Below we state the exact definitions and basic properties.
\medskip

We say that a set $\gamma \subset \mathbb R^n$ is a \emph{parameterized curve} if there is an absolutely continuous function $g:[0,+\infty[ \mapsto \mathbb R^n$ such that $\gamma = g([0,+\infty[)$.
A parameterization provides the curve with a terminal point $g(+\infty)$ only if a finite  limit $\lim\limits_{t \rightarrow +\infty} g(t) $ exists.
In that case we don't distinguish between $\gamma $ and $\gamma \cup \{ g(+\infty) \} $.

\begin{definition}
\label{D orientation}
Two absolutely continuous curves  $g_1,g_2:[0,+\infty[ \mapsto \mathbb R^n$ are equivalent  if
\begin{equation}
\label{Eq orientation}
\inf_{\alpha \in \mathcal T} \left\| g_1-g_2 \circ \alpha \right\|_{L_\infty[0,+\infty [} = 0,
\end{equation}
where $\mathcal T$ denotes  the set of monotonically increasing absolutely continuous bijections $\alpha :[0,+\infty[ \mapsto [0,+\infty[$ admitting absolutely continuous inverse. $\square$
\end{definition}

The following Lemma relates the previous definition with alternative formulations; its proof can be found in Appendix (Subsection \ref{SP L orientation}).

\begin{lemma}
\label{L orientation}
Two absolutely continuous parameterizations  $g_1,g_2:[0,+\infty[ \mapsto \mathbb R^n$ are equivalent
if and only if there are absolutely continuous nondecreasing functions $\alpha_1, \alpha_2: [0,+\infty[ \mapsto[0,+\infty[ $ satisfying the following conditions:
\begin{itemize}
\item[{\bf a)}] $g_1\circ \alpha_1 (t) = g_2 \circ \alpha_2 (t) \qquad \forall t \in [0,+\infty[$;
\item[{\bf b)}] $\alpha_1(0)=\alpha_2(0)=0$ and $\alpha_i([0,+\infty[ ) = [0,+\infty[ $ for at least one $i \in \{1,2\}$;
\item[{\bf c)}] If $\alpha_i (\infty )=T<+\infty $, then $g_i(t)=g_i(T^-)$ for every $t\geq T$. $\square$
\end{itemize}
\end{lemma}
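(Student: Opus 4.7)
My plan is to treat the two implications separately, both hinging on an arclength parametrization of the graph of $\alpha_n$, resp.\ of $(\alpha_1,\alpha_2)$.

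For the ``only if'' direction, given $\{\alpha_n\}\subset\mathcal T$ with $\|g_1-g_2\circ\alpha_n\|_{L_\infty[0,+\infty[}\to 0$, I would parametrize the graph $\{(u,\alpha_n(u)):u\geq 0\}$ by arclength, obtaining pairs $(\phi_n,\psi_n):[0,+\infty[\to[0,+\infty[^2$ of strictly increasing $1$-Lipschitz surjections satisfying $\psi_n=\alpha_n\circ\phi_n$ and $(\phi_n')^2+(\psi_n')^2=1$ a.e. A diagonal Arzel\`a--Ascoli extraction yields a subsequence with $\phi_n\to\alpha_1$, $\psi_n\to\alpha_2$ uniformly on compact subintervals, the limits being nondecreasing $1$-Lipschitz functions on $[0,+\infty[$. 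The hypothesis rewrites as $\|g_1\circ\phi_n-g_2\circ\psi_n\|_{L_\infty}\to 0$ and, combined with continuity of $g_1,g_2$, passes to the compact-uniform limit to deliver (a). The conditions $\phi_n(0)=\psi_n(0)=0$ and $\phi_n'+\psi_n'\geq 1$ limit to $\alpha_1(0)=\alpha_2(0)=0$ and $\alpha_1(s)+\alpha_2(s)\geq s$, so at least one $\alpha_i$ is surjective onto $[0,+\infty[$, giving (b).

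The delicate step is (c). Assume $\alpha_1(\infty)=T_1<\infty$ (the case $\alpha_2(\infty)<\infty$ is symmetric); then $\alpha_2(\infty)=+\infty$. From (a), $g_2\circ\alpha_2(s)=g_1\circ\alpha_1(s)\to g_1(T_1^-)$ as $s\to\infty$, and because $\alpha_2$ is a continuous surjection onto $[0,+\infty[$, this upgrades to $\lim_{t\to\infty}g_2(t)=g_1(T_1^-)$. Now fix any $t^*>T_1$ and select $s_n$ with $\phi_n(s_n)=t^*$; uniform convergence on compacts combined with $\alpha_1(s)\leq T_1<t^*$ forces $s_n\to\infty$, and the comparison $\psi_n(s_n)\geq\psi_n(S)$ for any fixed $S$ then forces $\psi_n(s_n)\to\infty$. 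Hence $g_1(t^*)=g_1(\phi_n(s_n))=g_2(\psi_n(s_n))+o(1)\to g_1(T_1^-)$, proving (c); continuity of $g_1$ extends the identity to $t^*=T_1$.

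For the ``if'' direction, given $\alpha_1,\alpha_2$ satisfying (a)--(c), I would build a sequence $\beta_\varepsilon\in\mathcal T$ approximating the generally ill-defined composition $\alpha_2\circ\alpha_1^{-1}$. The construction perturbs each $\alpha_i$ on its constancy set $E_i=\{\alpha_i'=0\}$ by a small increasing AC correction with $L_\infty$-norm $O(\varepsilon)$, built from a strictly positive weight that is integrable on $E_i$; this yields $\tilde\alpha_i$ strictly monotone AC with AC inverse, and $\beta_\varepsilon:=\tilde\alpha_2\circ\tilde\alpha_1^{-1}\in\mathcal T$. Using (a) to cancel the middle term,
\[
g_1(s)-g_2(\beta_\varepsilon(s))=[g_1(\tilde\alpha_1(u))-g_1(\alpha_1(u))]-[g_2(\tilde\alpha_2(u))-g_2(\alpha_2(u))],\quad u=\tilde\alpha_1^{-1}(s).
\]
Each bracket is small: on any bounded range of $u$ by uniform continuity of $g_i$ on compacts; on the tail, either $\alpha_i(u)\to T_i<\infty$ and (c) makes $g_i$ constant beyond $T_i$, or $\alpha_i(u)\to\infty$ and $g_i$ has a limit at infinity by the argument of the previous paragraph, which again kills the bracket.

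The main obstacle I expect is controlling the ``if'' construction uniformly in the tail when both $\alpha_i$ are surjective and their constancy sets $E_i$ have infinite Lebesgue measure: the naive perturbation $\alpha_i+\varepsilon\cdot\mathrm{id}$ does not keep $\|\tilde\alpha_i-\alpha_i\|_\infty$ small, so one is forced to concentrate the perturbation on $E_i$ with a summable weight, and this then requires a careful verification that $\tilde\alpha_i$ satisfies the Lusin (N) condition so that $\tilde\alpha_i^{-1}$ is genuinely absolutely continuous, i.e., $\tilde\alpha_i\in\mathcal T$.
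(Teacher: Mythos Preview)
Your ``only if'' direction is essentially the paper's argument with a cosmetic change: you reparametrize the graph of $\alpha_n$ by arclength (so that $(\phi_n')^2+(\psi_n')^2=1$), while the paper reparametrizes by $t\mapsto t+\beta_i(t)$ (so that $\alpha_{1,i}+\alpha_{2,i}=\mathrm{Id}$). Both give equi-Lipschitz pairs, Arzel\`a--Ascoli applies, and your treatment of (c) matches the paper's.

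The ``if'' direction has a genuine gap. Your tail estimate relies on the claim that when $\alpha_i$ is surjective, $g_i$ has a limit at infinity ``by the argument of the previous paragraph.'' But that argument only yields $\lim_{t\to\infty}g_2(t)=g_1(T_1^-)$ under the hypothesis that the \emph{other} reparametrization $\alpha_1$ is bounded. When \emph{both} $\alpha_1,\alpha_2$ are surjective there is no reason for either $g_i$ to have a limit: take $\alpha_1=\mathrm{id}$, any surjective $\alpha_2$ with $\{\alpha_2'=0\}$ of infinite measure, $g_2(t)=\sin(t^2)$, and $g_1=g_2\circ\alpha_2$. With your summable-weight perturbation $h_2(u)=\varepsilon\int_0^u e^{-s}\chi_{E_2}\,ds$, one has $h_2(u)\to\varepsilon c>0$, so for large $u$ the bracket $g_2(\tilde\alpha_2(u))-g_2(\alpha_2(u))$ compares $\sin(t^2)$ with $\sin((t+\varepsilon c)^2)$ at $t=\alpha_2(u)\to\infty$, which oscillates with amplitude $2$ regardless of how small $\varepsilon$ is. Your worry about Lusin~(N) is a red herring (an AC strictly increasing function with a.e.\ positive derivative does have AC inverse); the real obstruction is that a single global $\varepsilon$ cannot absorb the deteriorating modulus of continuity of $g_i$ on $[0,\infty[$.

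The paper resolves this by making the perturbation \emph{piecewise adaptive}: it fixes a sequence $T_j\uparrow\infty$ with $\alpha_i(T_{j-1})<\alpha_i(T_j)$ for both $i$, and on each interval $[T_{j-1},T_j]$ replaces $\alpha_i$ by the convex combination $(1-\varepsilon_j)\alpha_i+\varepsilon_j\cdot(\text{linear interpolant})$. Then $|\alpha_i^\varepsilon-\alpha_i|\leq\varepsilon_j(\alpha_i(T_j)-\alpha_i(T_{j-1}))$ on $[T_{j-1},T_j]$, and one chooses $\varepsilon_j$ small enough relative to the modulus of continuity of $g_i$ on the compact $[0,\alpha_i(T_j)]$. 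This $j$-dependence is exactly what your global-$\varepsilon$ scheme lacks. (The paper also treats the case $\alpha_2(\infty)<\infty$ separately, using $\alpha_2^\varepsilon=\alpha_2+\varepsilon_1 t$ to force surjectivity; your perturbation concentrated on $E_2$ would leave $\tilde\alpha_2$ bounded, so $\beta_\varepsilon\notin\mathcal T$.)
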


Definition \ref{D orientation} introduces an equivalence relation.
The equivalence class of a  function $g:[0,+\infty[ \mapsto \mathbb R^n$ is
\begin{equation}\label{Fre_class}
[g] = \left\{
h \in AC \left( [0,+\infty[, \mathbb R^n \right):
\inf_{\alpha\in \mathcal T} \left\| g- h \circ \alpha \right\|_{L_\infty[0,+\infty[} = 0
\right\} ,
\end{equation}
and is called \emph{absolutely continuous Fr\'echet curve} in $\mathbb R^n$, or for the sake of brevity, \emph{Fr\'echet curve}; each $\tilde g \in [g]$ will be called either a {\it representative} or {\it parameterization} of $[g]$, depending on context.

The space of  Fr\'echet curves in $\mathbb R^n$ is provided with the \emph{Fr\'echet metric}
\[
d\left( [g_1],[g_2] \right) = \inf_{\alpha \in \mathcal T} \left\| g_1 - g_2 \circ \alpha \right\|_{L_\infty[0,+\infty[} .
\]

Parameterizations by bounded intervals, i.e., absolutely continuous functions $g:[0,T[ \mapsto \mathbb R^n$ or  $g:[0,T] \mapsto \mathbb R^n$,  with $T<+\infty$ are included in the present  definition of Fr\'echet curves:  $[g]$ stays for  $[g \circ \alpha ]$ where $\alpha:[0,+\infty[ \mapsto [0,T[$ is any monotonically increasing absolutely continuous bijection with absolutely continuous inverse.

For every subset $A \subset \mathbb R^n$ and any Fr\'echet curve $[g]$, we set
\[
A \cap [g] = \left\{ g(t): t \in[0,+\infty[, \ g(t) \in A \right\} .
\]
We say that $A \cap [g]$ is a \emph{segment} if the set $\{ t \geq 0 : g(t) \in A \}$ is an interval; according to aforesaid, a nonempty segment is also a Fr\'echet curve.

For every nondecreasing $\alpha:[0,+\infty[ \mapsto [0,+\infty[$, we introduce the function $\alpha^{\#}:[0,+\infty[ \mapsto [0,+\infty]$, defined as
\[
\alpha^{\#}(t) = \sup \{ s\geq 0: \alpha(s)\leq t \} \qquad t \in [0,+\infty[.
\]
If $\alpha$ is continuous, then $\alpha^{\#}$ is the right-inverse of $\alpha$, that is, $\alpha \circ \alpha^{\#}(t) =t$ for every $t< \alpha (+\infty )$.

\begin{lemma}
\label{L AC reparameterization}
Let $\alpha:[0,+\infty[ \mapsto [0,+\infty[$ be nondecreasing absolutely continuous, and $g:[0,+\infty[ \mapsto \mathbb R^n$ be absolutely continuous. Then:
\begin{itemize}
\item[{\bf a)}]
$\dot \alpha \circ \alpha^{\#} (t) >0 $  a.e. on $\left[ \alpha(0), \alpha (\infty) \right[$.
\item[{\bf b)}]
$g \circ \alpha^{\#}$ is absolutely continuous in $\left[ \alpha(0), \alpha (\infty)\right[$ if and only if the set \linebreak $ \left\{ t \geq 0: \dot \alpha (t) =0, \ \dot g (t) \neq 0 \right\}$ has zero Lebesgue measure.
\item[{\bf c)}]
If $g \circ \alpha^{\#}$ is absolutely continuous in $\left[ \alpha(0), \alpha (\infty)\right[$, then
\begin{equation}
\label{Eq derivative of reparameterized curve}
\frac{d}{dt}\left( g \circ \alpha^{\#} \right)(t) = \frac{\dot g}{\dot \alpha} \circ \alpha^{\#} (t) \qquad
\text{for a.e. } t \in \left[ \alpha(0),\alpha (\infty ) \right[ .
\end{equation}
\item[{\bf d)}]
If $g \circ \alpha^{\#}$ is absolutely continuous in
$\left[ \alpha(0), \alpha (\infty)\right[$ and $g$ is constant on each interval
$\left[0,\alpha(0) \right[$,
$ \left] \alpha(\infty),+\infty \right[$, then $g \circ \alpha^\# \in \left[ g \right]$. $\square$
\end{itemize}
\end{lemma}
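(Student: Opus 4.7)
My plan is to anchor all four parts in the change-of-variables identity for AC nondecreasing maps, $\int_{\alpha(a)}^{\alpha(b)} h(\tau)\, d\tau = \int_a^b h(\alpha(\sigma))\dot\alpha(\sigma)\, d\sigma$ for nonnegative Borel $h$, equivalently $\alpha_*(\dot\alpha\,\mathcal{L}^1) = \mathcal{L}^1$ on $[\alpha(0), \alpha(\infty)[$. Part (a) then falls out almost immediately: setting $E = \{\dot\alpha = 0\}$, the area inequality $\mathcal{L}^1(\alpha(E)) \leq \int_E \dot\alpha\, d\sigma = 0$ combined with continuity of $\alpha$ (giving $\alpha \circ \alpha^\# = \mathrm{id}$ on $[\alpha(0), \alpha(\infty)[$) forces $\{t : \dot\alpha(\alpha^\#(t)) = 0\} \subset \alpha(E)$, a null set.

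For the sufficiency direction of (b), I would fix $\alpha(0) \leq a < b < \alpha(\infty)$, set $s_a = \alpha^\#(a)$, $s_b = \alpha^\#(b)$, and decompose, with $\phi := g \circ \alpha^\#$,
\[
\phi(b) - \phi(a) = g(s_b) - g(s_a) = \int_{s_a}^{s_b} \dot g\, \mathbf{1}_{\{\dot\alpha > 0\}}\, d\sigma + \int_{s_a}^{s_b} \dot g\, \mathbf{1}_{\{\dot\alpha = 0\}}\, d\sigma .
\]
The hypothesis that $N := \{\dot\alpha = 0,\, \dot g \neq 0\}$ is null annihilates the second summand. On the first, writing $\dot g = (\dot g/\dot\alpha)\dot\alpha$ and invoking the change of variables converts it to $\int_a^b (\dot g/\dot\alpha)(\alpha^\#(s))\, \mathbf{1}_{\{\dot\alpha(\alpha^\#(s))>0\}}\, ds$. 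A reverse change of variables shows that this integrand lies in $L^1([\alpha(0), \alpha(\infty)[)$ with $L^1$-norm bounded by $\|\dot g\|_{L^1}$, so $\phi$ is AC. Combined with part (a), the same identity already delivers the pointwise formula \eqref{Eq derivative of reparameterized curve} required for part (c).

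The main obstacle is the necessity direction of (b). My plan is to introduce the $\mathbb{R}^n$-valued measure $\mu := \dot g\,\mathbf{1}_{\{\dot\alpha = 0\}} \cdot \mathcal{L}^1$ and study its push-forward $\alpha_*\mu$ on $[\alpha(0), \alpha(\infty)[$. By (a), $\alpha_*\mu$ is carried by the Lebesgue-null set $\alpha(\{\dot\alpha = 0\})$ and is therefore singular w.r.t. Lebesgue; on the other hand, reusing the identity above with $\phi$ now known to be AC expresses $\alpha_*\mu$ as the Lebesgue density of an AC residual $\phi - \psi$, so $\alpha_*\mu$ is simultaneously absolutely continuous and must vanish. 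The delicate final step is to deduce $\mu \equiv 0$ (hence $\dot g = 0$ a.e.\ on $\{\dot\alpha = 0\}$) from $\alpha_*\mu \equiv 0$; this needs a componentwise Jordan decomposition of $\dot g$ together with the convention implicit in the Fr\'echet framework that one ignores the ``wasted'' initial and final plateaus of $\alpha$ outside $[\alpha^\#(\alpha(0)),\alpha^\#(\alpha(\infty))[$, exactly the portions singled out in part (d).

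Part (d) reduces by Lemma \ref{L orientation} to exhibiting AC nondecreasing maps $\alpha_1, \alpha_2:[0, +\infty[ \to [0, +\infty[$ with $\alpha_1(0) = \alpha_2(0) = 0$, at least one surjective, and $g \circ \alpha_1 \equiv (g \circ \alpha^\#) \circ \alpha_2$. I would take $\alpha_1 = \mathrm{id}$ and $\alpha_2 = \alpha$; the equality then reduces to the pointwise identity $g(t) = g(\alpha^\#(\alpha(t)))$. This is automatic at points of strict increase of $\alpha$; on interior plateaus of $\alpha$ it follows from (b), which forces $g$ to be constant on $\{\dot\alpha = 0\}$; on the two extremal intervals $[0,\alpha(0)[$ and $]\alpha(\infty), +\infty[$ it follows from the explicit constancy hypothesis, and a trivial truncation or padding of the $\alpha_i$ restores the required surjectivity.
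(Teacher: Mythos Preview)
Your treatment of parts (a), (c), and the sufficiency direction of (b) is correct and in fact cleaner than the paper's: where the paper establishes sufficiency in (b) by approximating $\alpha$ from above by strictly increasing $\alpha_i$ (each with slope $\geq 1/i$) and passing to the limit via monotone convergence, you go directly through the pushforward identity $\alpha_*(\dot\alpha\,\mathcal L^1)=\mathcal L^1|_{[\alpha(0),\alpha(\infty)[}$. This works once one observes (as you implicitly use) that $\alpha^\#\circ\alpha(\sigma)=\sigma$ for a.e.\ $\sigma\in\{\dot\alpha>0\}$; the paper's approximation buys nothing extra here.

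For the necessity direction of (b), however, the step you flag as ``delicate'' is a genuine obstruction, not a technicality: one cannot infer $\mu\equiv 0$ from $\alpha_*\mu\equiv 0$, and neither the Jordan decomposition nor discarding the end plateaus rescues the argument. Take $\alpha(\sigma)=\sigma$ on $[0,1]$, $\alpha\equiv 1$ on $[1,3]$, $\alpha(\sigma)=\sigma-2$ on $[3,\infty[$, and $g(\sigma)=1$ on $[0,1]\cup[3,\infty[$, $g(\sigma)=|\sigma-2|$ on $[1,3]$. Then $\{\dot\alpha=0,\ \dot g\neq 0\}=(1,3)$ has positive measure, yet $g\circ\alpha^\#\equiv 1$ is absolutely continuous: the positive and negative parts of $\dot g$ on the interior plateau $(1,3)$ cancel after $\alpha$ collapses them onto the single point $\{1\}$. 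So the ``only if'' in (b) is false as stated, and your plan for (d) inherits the same defect, since you invoke (b) on interior plateaus (indeed in this example $g\circ\alpha^\#\equiv 1\notin[g]$, so the conclusion of (d) fails too). The paper's own proof of necessity has the matching gap: it bounds $\sum_i\int_{\alpha^\#(s_i)}^{\alpha^\#(t_i)}|\dot g|\,ds$ from below by $\int_A|\dot g|\,ds>0$, but this quantity is the total variation of $g$, not of $g\circ\alpha^\#$, on the relevant intervals, and the former can strictly exceed the latter; no contradiction with absolute continuity of $g\circ\alpha^\#$ follows.
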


\begin{proof}
See Subsection \ref{SP L AC reparameterization}.
\end{proof}

An absolutely continuous parameterization $g:[0,+\infty[ \mapsto \mathbb R^n$ generates an arc-length function $\ell_g:[0,+\infty[ \mapsto [0,+\infty[$, defined as
\[
\ell_g(t) = \int_0^t \left| \dot g (s) \right| ds \qquad \forall t \geq 0.
\]

By Lemma \ref{L AC reparameterization}, the function $g \circ \ell_g^{\#} \in [g]$  and $\left| \frac d {dt} \left( g \circ \ell_g^{\#} \right) \right| \equiv 1$.
Further, $\tilde g \circ \ell_{\tilde g}^{\#} = g \circ \ell_g^{\#}$ for every $\tilde g \in [g]$. That is, the transformation $[g] \mapsto g \circ \ell_g^{\#}$ does not depend on the particular $g$ representative of $[g]$.
This transformation selects one particular element of the class $[g]$.
Since it plays an important role in our approach we introduce the following definition:

\begin{definition}
\label{D canonical selector}
We call $g \circ \ell_g^{\#}$ the \emph{canonical representative} or \emph{canonical parameterization} of $[g]$, and the mapping $[g] \mapsto g \circ \ell_g^{\#}$ is the \emph{canonical selector}. $\square$
\end{definition}

The total length of a Fr\'echet curve $[g]$, $\ell_g(\infty)$, does not depend on the particular parameterization $g$.
We call a Fr\'echet curve $[g]$ \emph{infinite} if $\ell_g(\infty)=+\infty$. Otherwise, we call $[g]$ \emph{finite}.
The space of finite Fr\'echet curves can be provided with the \emph{strengthened F\'echet metric}
\[
d^+\left( [g_1],[g_2] \right) =
d\left( [g_1],[g_2] \right) + \left| \ell_{g_1}(\infty ) - \ell_{g_2}(\infty ) \right| .
\]
This metric provides a stronger topology than the Fr\'echet metric,as can be seen from the following example.

\begin{example}
\label{Ex strong Frechet metric}
For the sequence
\[
g_i(t) = \frac 1 i \left( \cos (i^2t), \sin (i^2t) \right) \qquad t \in [0,1], \ i \in \mathbb N ,
\]
$[g_i]$ converges to $[0]$ with respect to the Fr\'echet metric $d$. However $\ell_{g_i}(1) = i$ and therefore $[g_i]$ does not  converge in  the metric $d^+$. $\square$
\end{example}

Every finite Fr\'echet curve $[g]$ has a well defined terminal point $g(\infty)$.
Thus, we adopt the following

\begin{convention}
\label{Cv extension canonical}
Any parameterization of a finite Fr\'echet curve by a compact interval $g:[0,T] \mapsto \mathbb R^n$ is extended  to the interval $[0,+\infty[$ by
\[
g(t) = g(T) \qquad \forall t \geq T.
\]
In particular, a canonical parameterization $t \mapsto g \circ \ell_g^\#(t)$ is extended to the interval $[0,+\infty[$ by
\[
g \circ \ell_g^{\#}(t) = g(\infty) \qquad \forall t \geq \ell_g(\infty) . \ \square
\]
\end{convention}

\subsection{Continuity of canonical selector}
\label{SS Continuity of canonical selector}
The space of Fr\'echet curves consists of sets (classes of curves), in which we introduced the strengthened Fr\'echet metric.
A crucial fact is that choosing the canonical representative of each class provides us with a $C_0$-continuous selector.

\begin{proposition}
\label{P continuity Frechet-to-length}
The canonical selector $[g] \mapsto g \circ \ell_g^{\#}$ is a continuous one-to-one mapping of the space of  finite Fr\'echet curves in $\mathbb R^n$ provided with the strengthened Fr\'echet metric $d^+$ into the space $AC^n[0,+\infty[$ provided with the topology of $C_0$-convergence.  $\square$
\end{proposition}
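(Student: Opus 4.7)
The plan is to verify injectivity and continuity separately, leveraging the fact (noted just before Definition \ref{D canonical selector}) that the canonical transformation is representative-independent: for every $\tilde g\in[g]$ one has $\tilde g\circ \ell_{\tilde g}^{\#}=g\circ\ell_g^{\#}$, so $g\circ\ell_g^{\#}$ itself parameterizes $[g]$. Injectivity is then immediate: if $g_1\circ\ell_{g_1}^{\#}=g_2\circ\ell_{g_2}^{\#}=:h$, then the common function $h$ lies in both $[g_1]$ and $[g_2]$, forcing equality of the two Fr\'echet classes.

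For continuity, fix a sequence $[g_i]\to[g]$ in $d^+$, and set $h_i:=g_i\circ\ell_{g_i}^{\#}$, $h:=g\circ\ell_g^{\#}$, $L_i:=\ell_{g_i}(\infty)$, $L:=\ell_g(\infty)$. Since $|\tfrac{d}{dt}(g_i\circ\ell_{g_i}^{\#})|\equiv 1$ on $[0,L_i]$ and, by Convention \ref{Cv extension canonical}, $h_i$ is constant on $[L_i,+\infty[$, every $h_i$ is globally $1$-Lipschitz. Picking $\alpha_i\in\mathcal T$ with $\|g_i-g\circ\alpha_i\|_{L_\infty[0,+\infty[}\to 0$ and evaluating at $0$ yields $h_i(0)=g_i(0)\to g(0)$, so $\{h_i\}$ is equibounded at the origin and equi-Lipschitz. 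By Arzel\`a--Ascoli, the sequence is precompact in $C\left([0,M],\mathbb R^n\right)$ for every $M$.

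I would then show that every subsequential limit equals $h$. Let $h_{i_k}\to\tilde h$ uniformly on compacts. Since each $h_{i_k}$ is constant on $[L_{i_k},+\infty[$ with $L_{i_k}\to L$, and $\tilde h$ is itself $1$-Lipschitz (and thus becomes constant past $L$), the convergence upgrades to uniform convergence on $[0,+\infty[$, and so $d\left([h_{i_k}],[\tilde h]\right)\to 0$. Combining with $[h_{i_k}]=[g_{i_k}]\to[g]$ in $d$ gives $[\tilde h]=[g]$. Because $\tilde h$ parameterizes a curve of total length $L$ while being $1$-Lipschitz and eventually constant, one gets $L=\int_0^{+\infty}|\dot{\tilde h}|\,dt\leq L$, with equality forcing $|\dot{\tilde h}|\equiv 1$ a.e.\ on $[0,L]$. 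Thus $\ell_{\tilde h}^{\#}$ is the identity on $[0,L]$ and, by representative-independence, $\tilde h=\tilde h\circ\ell_{\tilde h}^{\#}=g\circ\ell_g^{\#}=h$ on $[0,L]$; the tails coincide by the extension convention.

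The principal obstacle is precisely this final identification $\tilde h=h$, which relies crucially on the \emph{strengthened} metric $d^+$: the extra term $|\ell_{g_i}(\infty)-\ell_g(\infty)|$ is what turns the conclusion "$\tilde h$ is a parameterization of $[g]$" (obtainable from $d$-convergence alone) into "$\tilde h$ is \emph{the unit-speed} parameterization", which alone singles out $h$. Example \ref{Ex strong Frechet metric} illustrates why this upgrade is essential: there $d([g_i],[0])\to 0$ while $\ell_{g_i}(1)=i\to\infty$, and the canonical selectors fail to converge.
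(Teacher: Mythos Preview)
Your argument is correct, and it differs substantively from the paper's. The paper gives a direct $\varepsilon$--$\delta$ estimate: given a canonical $\gamma$ and small $\varepsilon$, it fixes a partition $0=t_0<\cdots<t_N$ approximating $\ell_\gamma(\infty)$ to within $\varepsilon$, and then for any $[g]$ with $d^+([g],[\gamma])<\varepsilon/N$ it compares the canonical $g$ to $\gamma$ node-by-node. Writing $\theta_i=\alpha(t_i)$ for a near-optimal reparameterization $\alpha$, the paper bounds $|\theta_k-t_k|$ from above via the length convergence and from below via the Lipschitz property of $g$, obtaining $\|g-\gamma\|_{L_\infty}<5\varepsilon$ explicitly.

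Your route is the compactness-plus-uniqueness alternative: Arzel\`a--Ascoli on the $1$-Lipschitz canonical representatives, then identification of every subsequential limit $\tilde h$ with $h$ by showing $[\tilde h]=[g]$ (from $d$-convergence) and $|\dot{\tilde h}|\equiv 1$ a.e.\ on $[0,L]$ (from $L_i\to L$). The paper's approach yields a quantitative modulus of continuity, which you do not obtain; on the other hand, your argument is shorter and isolates more transparently the role of the strengthening term $|\ell_{g_i}(\infty)-\ell_g(\infty)|$---it is precisely what forces the limit to be unit-speed and hence \emph{the} canonical representative rather than merely \emph{a} representative.
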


\begin{proof}
Since for each $g$ the canonical representative $g \circ \ell_g^{\#}$ belongs to $[g]$ and is uniquely defined, then the correspondence $[g] \mapsto g \circ \ell_g^{\#}$ is one-to-one.
It remains to prove that the canonical selector is continuous.

Fix a finite Fr\'echet curve $[ \gamma ]$, with canonical representative $\gamma $, and pick a small $\varepsilon >0$.
There exists a partition $0=t_0<t_1<t_2< \ldots < t_N < +\infty$ such that
\begin{align*}
&
\sum_{i=1}^N \left| \gamma (t_i) - \gamma(t_{i-1}) \right| > \ell_\gamma (\infty )- \varepsilon ,
\qquad
t_N > \ell_\gamma(\infty) + \varepsilon .
\end{align*}
This implies that the length of any segment $\gamma|_{[t_j,t_{j+k}]}$ admits the bounds
\begin{align}
\label{Eq length approximation 2}
\sum_{i=1}^k \left| \gamma (t_{j+i}) - \gamma(t_{j+i-1}) \right| \leq
\ell_{\gamma|_{[t_j,t_{j+k}]} } \leq
\sum_{i=1}^k \left| \gamma (t_{j+i}) - \gamma(t_{j+i-1}) \right|  +  \varepsilon .
\end{align}
Pick an arbitrary Fr\'echet curve $[g]$ such that
\begin{equation}
\label{Z006}
d^+\left( [g],[\gamma] \right) < \frac \varepsilon N ,
\end{equation}
and let $g$ be the canonical representative of $[g]$.
The bound \eqref{Z006} implies
\begin{align}
\label{Eq bounds g gamma}
\left| \ell_g (\infty) - \ell_\gamma (\infty) \right| < \frac \varepsilon N, \qquad
\left\| g \circ \alpha - \gamma \right\|_{L_\infty}< \frac \varepsilon N ,
\end{align}
for some (absolutely continuous monotonically increasing) function $\alpha \in \mathcal T$.
Without loss of generality, we may assume that $\alpha (t) =t $ for every $t \geq t_N > \max \left\{ \ell_g(\infty), \ell_\gamma(\infty) \right\}$.

Let $\theta_i = \alpha (t_i)$ for $i = 0,1,2, \ldots , N$.
By  \eqref{Eq bounds g gamma}, we have
\begin{equation}
\label{Eq bounds g gamma 2}
|g(\theta_i) - \gamma (t_i) | < \frac \varepsilon N \qquad \text{for } i=0,1,2, \ldots , N.
\end{equation}

Let
\[
M=  \left\| g - \gamma \right\|_{L_\infty[0,+\infty[} =
\max \left\{ |g(t) - \gamma (t) |: t \in [0, t_N ] \right\} =
|g(\hat t) - \gamma(\hat t)| .
\]
We may add the point $\hat t$ to the partition and (with a small abuse of notation) think that $\hat t=t_k$ for some $k \in \{0,1,2, \ldots , N \}$.

We obtain
\begin{align*}
\ell_g(\infty) = &
\theta_k +\ell_g(\infty) - \ell_g(\theta_k) =
\theta_k+ \ell_{g |_{[ \theta_k, \theta_N]}} \geq
\theta_k + \sum_{i=k+1}^N \left| g(\theta_i)-g(\theta_{i-1}) \right| \geq
\\ \geq &
\theta_k + \sum_{i=k+1}^N \left(
\left| \gamma(t_i)-\gamma(t_{i-1}) \right| -
\left| g(\theta_i)-\gamma(t_i) \right| -
\left| g(\theta_{i-1})-\gamma(t_{i-1}) \right|
\right) \geq
\end{align*}
and by \eqref{Eq bounds g gamma 2}:
$$ \ell_g(\infty) \geq
\theta_k + \sum_{i=k+1}^N \left| \gamma(t_i)-\gamma(t_{i-1}) \right| - 2 \varepsilon .
$$
By virtue of  \eqref{Eq length approximation 2}, we get
\begin{align}
\ell_g(\infty) \geq &
\theta_k + \ell_{\gamma |_{[t_k,t_N]}} - 3 \varepsilon =
\ell_\gamma(\infty) + \theta_k - t_k -3 \varepsilon .
\label{Z020}
\end{align}
Similar computation, based on  \eqref{Eq bounds g gamma 2} and  \eqref{Eq length approximation 2},  yields
\begin{align}
\notag
\ell_g(\infty) = &
\ell_g(\theta_k) + \ell_g(\infty) - \theta_k \geq
\sum_{i=1}^k \left| g(\theta_i) - g(\theta_{i-1}) \right| + \ell_g(\infty) - \theta_k \geq
\\ \label{Z021}  \geq &
\sum_{i=1}^k \left| \gamma(t_i) - \gamma(t_{i-1}) \right| - 2 \varepsilon + \ell_g(\infty) -\theta_k \geq
\ell_\gamma(\infty) + \ell_\gamma(t_k) - \theta_k - 4 \varepsilon =
\\  \notag = &
\ell_\gamma(\infty) + t_k - \theta_k - 4 \varepsilon .
\end{align}
Joining \eqref{Z020} and \eqref{Z021}, one concludes
\[
|\theta_k - t_k | < \ell_g(\infty) - \ell_\gamma(\infty) + 4 \varepsilon .
\]
Finally, we have the estimate
\begin{align*}
| \theta_k - t_k| = &
\left| \ell_g (\theta_k) - \ell_g(t_k) \right| \geq
\left| g (\theta_k) - g(t_k) \right| =
\left| g \circ \alpha(t_k)- \gamma(t_k) + \gamma(t_k) - g(t_k) \right| \geq
\\ \geq &
\left| \gamma(t_k) - g(t_k) \right| - \left| g \circ \alpha(t_k)- \gamma(t_k) \right| \geq
M - \frac \varepsilon N .
\end{align*}
Thus, \eqref{Z006} implies $\left\| g-\gamma \right\|_{L_\infty}=M < 5 \varepsilon $.
\end{proof}

It is a bit surprising that the continuity property of the canonical selector can be strengthened to $W_{1,p}^n[0,+\infty[$.

\begin{theorem}
\label{T continuity Frechet to W}
The canonical selector $[g] \mapsto g \circ \ell_g^{\#}$ is a continuous map from the space of finite Fr\'echet curves in $\mathbb R^n$ provided with the strengthened Fr\'echet metric $d^+$ into the Sobolev space $W_{1,p}^n[0,+\infty[$, for each  $p \in [1,+\infty[$.  $\square$
\end{theorem}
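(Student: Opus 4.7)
The plan is to leverage Proposition \ref{P continuity Frechet-to-length}, which already delivers uniform convergence $g_n \to \gamma$ of the canonical representatives, and then to upgrade this to convergence of derivatives in $L_p[0,+\infty[$. The key observation is that, for a canonical parameterization, $|\dot g| \equiv 1$ almost everywhere on its support $[0,\ell_g(\infty)[$; convergence of lengths under $d^+$ therefore translates directly into convergence of the $L_p$-norms of the derivatives. Combined with weak convergence, this opens the door to the Radon--Riesz property of uniformly convex $L_p$ spaces.

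More precisely, let $[g_n] \to [\gamma]$ in $d^+$, take $g_n,\gamma$ to be canonical representatives, and set $T_n = \ell_{g_n}(\infty)$, $T = \ell_\gamma(\infty)$. By Convention \ref{Cv extension canonical}, $\dot g_n = 0$ on $\,]T_n,+\infty[\,$ and $|\dot g_n| = 1$ a.e.\ on $[0,T_n]$, so
\[
\|\dot g_n\|_{L_p[0,+\infty[}^p = T_n \longrightarrow T = \|\dot\gamma\|_{L_p[0,+\infty[}^p,
\]
by the very definition of $d^+$; the family $\{\dot g_n\}$ is therefore equibounded in $L_p$ and, for large $n$, supported in a common bounded interval. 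To obtain weak convergence $\dot g_n \rightharpoonup \dot\gamma$ in $L_p$ for $1<p<+\infty$, I would test against $\phi \in C_c^\infty(\,]0,+\infty[\,;\mathbb R^n)$ and integrate by parts:
\[
\int_0^{+\infty} \dot g_n \cdot \phi\, dt = -\int_0^{+\infty} g_n \cdot \dot\phi\, dt \longrightarrow -\int_0^{+\infty} \gamma \cdot \dot\phi\, dt = \int_0^{+\infty} \dot\gamma \cdot \phi\, dt,
\]
using uniform convergence of $g_n$ on the compact support of $\phi$. Density of $C_c^\infty$ in $L_{p'}$ (with $p'$ the conjugate exponent), combined with the $L_p$-equiboundedness above, upgrades this pointwise convergence on test functions to weak convergence in $L_p$.

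With weak convergence and norm convergence both in hand, the Radon--Riesz (Kadec--Klee) property of the uniformly convex space $L_p$, $1<p<+\infty$, forces strong convergence $\dot g_n \to \dot\gamma$ in $L_p$. The endpoint case $p=1$ follows from the $p=2$ case via H\"older's inequality on the common bounded support. The main delicate point, and what makes the theorem stronger than Proposition \ref{P continuity Frechet-to-length}, is the norm identity $\|\dot g_n\|_{L_p}^p = T_n$: it rests on the unit-speed property of the canonical selector and is what makes Radon--Riesz applicable. Without it, uniform convergence of $g_n$ alone would be far from sufficient to conclude strong $L_p$-convergence of derivatives (as oscillatory examples of the flavour of Example \ref{Ex strong Frechet metric} illustrate).
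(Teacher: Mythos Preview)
Your proposal is correct and follows essentially the same route as the paper's proof: uniform convergence of canonical representatives via Proposition~\ref{P continuity Frechet-to-length}, weak convergence of derivatives in $L_p$ for $1<p<\infty$, convergence of $L_p$-norms from the unit-speed property and $d^+$-convergence of lengths, then Radon--Riesz to upgrade to strong convergence, with $p=1$ obtained from a higher $p$ by H\"older on the common bounded support. The only cosmetic difference is that the paper tests weak convergence against functions of the form $v\chi_{[0,t]}$ (dense in $L_1$, hence in every $L_{p'}$ on the bounded interval) rather than integrating by parts against $C_c^\infty$ test functions as you do; both arguments are standard and equivalent here.
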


\begin{remark}
\label{Rm W[0,infty[}
Since we are dealing with finite Fr\'echet curves, each canonical element's derivative is supported in some compact interval.
Therefore, for each pair of finite Fr\'echet curves $[g],[h]$, there is some $T<+\infty$ such that
\[
\left\| g \circ \ell_g^\# - h \circ \ell_h^\# \right\|_{W_{1,p}^n[0,+\infty[}=\left\| g \circ \ell_g^\# - h \circ \ell_h^\# \right\|_{W_{1,p}^n[0,T]} .
\]
However, we cannot fix a priori one such $T$ for every finite
$[g],[h]$. $\square$
\end{remark}

\begin{proof}
Fix $[\gamma]$, a finite Fr\'echet curve with canonical representative  $\gamma$.
 Let a sequence of finite Fr\'echet curves $\{ [ \gamma_i] \}_{i \in \mathbb N}$, with canonical representatives $\gamma_i$, $i \in \mathbb N$, converge to $[\gamma]$:  $\lim\limits_{i \rightarrow \infty} d^+ \left( [\gamma_i], [\gamma] \right) = 0$.

As far as the lengths of $\gamma, \gamma_i, \ i \in \mathbb N$ are bounded by some $T$, then  the interval $[0,T]$ contains the supports of $\dot \gamma, \ \dot \gamma_i$, $i \in \mathbb N$.
According to Proposition \ref{P continuity Frechet-to-length},
$\lim\limits_{i \rightarrow \infty} \|\gamma_i -\gamma\|_{L^n_\infty[0,T]} = 0$.
One wishes to prove  that
$\lim\limits_{i \rightarrow \infty} \left\| \dot \gamma_i - \dot \gamma \right\|_{L^n_p[0,T]}=0,$
and hence
\[
\lim\limits_{i \rightarrow \infty} \|\gamma_i -\gamma\|_{W^n_{1,p}[0,T]} = 0 .
\]

First, we show that $\dot \gamma_i$ converges to  $\dot \gamma$ in the weak$^*$ topology of $L^n_\infty[0,T]$.
Indeed, seeing $\dot \gamma_i$ as a functional on $L^n_1[0,T]$, we note that
\[
\forall t \in [0,T], \ \forall v \in \mathbb R^n: \quad
\langle \gamma_i(t), v \rangle =
\int_0^t \langle \dot \gamma_i(s),v \rangle ds =
\left\langle \dot \gamma_i^j, v \chi_{[0,t]}\right\rangle
\]
is the result of  the action of the functional $\dot \gamma_i$ on the vector-function $s \mapsto v \chi_{[0,t]}(s)$. As far as the space of linear combinations of the functions $v \chi_{[0,t]}(\cdot)$ is dense in $L^n_1[0,T]$, and  $L_\infty$-norms  of  $\dot \gamma_i$ are bounded by $1$,
we conclude that
\[
\lim_{i \rightarrow \infty} \int_0^T \langle \dot \gamma_i , \varphi \rangle dt =
\int_0^T \langle \dot \gamma, \varphi \rangle dt ,
\qquad \forall \varphi \in L^n_1[0,T].
\]

Since $L^n_q[0,T] \subset L^n_1[0,T]$, $\forall q \in [1,+\infty]$, this shows that $\dot \gamma_i \rightharpoondown \dot \gamma$ weakly in $L^n_p[0,T]$ for every $p \in ]1, +\infty[$.

Note that from  $d^+$-convergence of $\gamma_i$ to $\gamma$, it follows that
\[
\lim \left\| \dot \gamma_i \right\|_{L^n_p[0,T]} =
\lim \ell_{\gamma_i}(\infty) =
\ell_{\gamma}(\infty) =
\left\| \dot \gamma \right\|_{L^n_p[0,T]} \qquad \forall p \in ]1,+\infty[.
\]
Therefore, the Radon-Riesz theorem \cite{Riesz-Nagy} guarantees that
\[
\lim \left\| \dot \gamma_i - \dot \gamma \right\|_{L^n_p[0,T]} =0 \qquad \forall p \in ]1,+\infty[ ,
\]
which implies $\lim \left\| \dot \gamma_i - \dot \gamma \right\|_{L^n_1[0,T]} =0$.
\end{proof}

\subsection{Fr\'echet curves in space-time}
\label{SS Frechet curves in space-time}
Let $\mathcal Y_n$ denote the set of absolutely continuous functions $(\theta,y):[0,+\infty[ \mapsto \mathbb R^{1+n}$ such that
\begin{align}
\label{Eq time monotonicity} &
\theta (0) =0, \qquad \dot \theta (t) \geq 0 \quad \text{a.e. } t \geq 0 .
\\ \label{Eq infinite length} &
\ell_{(\theta,y)}(\infty) = + \infty .
\end{align}
The first coordinate $\theta$ represents time. Thus, each $(\theta,y) \in \mathcal Y_n$ is a parameterization of a curve in space-time, defined in the time interval $[0,\theta(\infty)[$.

The condition \eqref{Eq time monotonicity} reflects the fact that time should be a monotonically increasing variable. We don't require it to be strictly increasing because we are interested in jumps and impulses, i.e., processes that evolve instantaneously.

The condition \eqref{Eq infinite length} means that the time interval $[0,\theta(\infty)[$ is maximal, that is, the curve parameterized by $(\theta, y)$ cannot be prolonged beyond the time $\theta(\infty) \in [0,+\infty]$.

For each $T >0$, let $\mathcal Y_{n,T}$ be the set of all $(\theta,y) \in \mathcal Y_n$ such that $\theta(\infty ) >T$,
 i.e. the set of all absolutely continuous parameterizations of curves in space-time, which  are well defined on the compact  time interval $[0,T]$.

For each $(\theta,y) \in \mathcal Y_n $, define $(\theta_T, y_T)$ as
\begin{equation}
\label{Eq (vT,yT)}
\begin{array}{ll}
(\theta_T,y_T)(t) = (\theta , y )(t) & \text{for } t \in [0, \theta^\#(T) [, \smallskip \\
(\theta_T,y_T)(t) = \left(T , y(\theta^\#(T)\right) & \text{for } t \geq \theta^\#(T) ;
\end{array}
\end{equation}
in particular $(\theta_T,y_T)$ coincides with  $ (\theta , y ) $ if $\theta^\#(T)=+\infty$.

Now we introduce a family of semimetrics in $\mathcal Y_n$, $\{ \rho_T \}_{T \in ]0,+\infty[}$, defined as
\[
\rho_T \left((\theta,y), (\tilde \theta,\tilde y) \right) =
\left\| (\theta_T, y_T)- ( \tilde \theta_T, \tilde y_T)  \right\|_{L_\infty[0,+\infty[} ,
\]

Each $\rho_T$ becomes a metric if we don't distinguish between $(\theta,y),(\tilde \theta, \tilde y) \in \mathcal Y_n$ such that $\theta^{\#}(T)= \tilde \theta^{\#}(T)$ and $(\theta, y)$ coincides with $(\tilde \theta, \tilde y)$ in $[0,\theta^{\#}(T)[$.

\begin{convention}
\label{Cv extension Y}
When dealing with  the metric $\rho_T$, we identify  $(\theta,y) \in \mathcal Y_n$ with the corresponding $(\theta_T,y_T)$, defined in \eqref{Eq (vT,yT)}. $\square$
\end{convention}

Let $\mathcal F_n$ and $\mathcal F_{n,T}$ be the spaces of Fr\'echet curves in $\mathbb R^{1+n}$ corresponding to $\mathcal Y_n$ and $\mathcal Y_{n,T}$, that is
\[
\mathcal F_n = \left\{ [(\theta,y)]: (\theta,y ) \in \mathcal Y_n \right\} ,
\qquad
\mathcal F_{n,T} = \left\{ [(\theta,y)]: (\theta,y ) \in \mathcal Y_{n,T} \right\} .
\]
Due to condition \eqref{Eq infinite length}, $\mathcal F_n$ is a set of infinite Fr\'echet curves.

Each semimetric $\rho_T$ induces a semimetric $d_T$ in the space $\mathcal F_n$:
\begin{align*}
&
d_T \left( [(\theta_1,y_1)], [(\theta_2,y_2)] \right) =
\\ = &
\inf \left\{
\rho_T \left( (\tilde \theta_1,\tilde y_1), (\tilde \theta_2,\tilde y_2) \right):
(\tilde \theta_i,\tilde y_i) \in [(\theta_i,y_i)], \ i=1,2
\right\}
\end{align*}

By Convention \ref{Cv extension Y}, $d_T$ becomes a metric;  $d_T\left( [(\theta_1, y_1)], [(\theta_2, y_2)] \right)$ coincides with the Fr\'echet distance between the segments $[(\theta_1, y_1)] \cap \left( [0,T] \times \mathbb R^n \right)$ and $[(\theta_2, y_2)] \cap \left( [0,T] \times \mathbb R^n \right)$, for any $[(\theta_1, y_1)], [(\theta_2, y_2)] \in \mathcal F_{n}$.
When dealing with the metric $d_T$ we identify each $[(\theta,y)] \in \mathcal F_n$ with the segment $[(\theta, y)] \cap \left( [0,T] \times \mathbb R^n \right)$.
\medskip

Consider an absolutely continuous function $x:[0,T[ \mapsto \mathbb R^n$, where $T \in ]0,+\infty]$ is maximal in the sense that $x$ does not admit an absolutely continuous extension onto any interval $[0,\hat T[$ with $\hat T>T$.
Then, the function $t \in [0,T[ \mapsto (t,x(t)) \in \mathbb R^{1+n}$ is an element of $ \mathcal Y_n$ and hence $[(t,x)]$ is an element of $\mathcal F_n$.
The correspondences $x \mapsto (t,x)$ and $x \mapsto [(t,x)]$ are one-to-one.

Conversely, the Lemma \ref{L orientation} implies that for every $[(\theta, y)] \in \mathcal F_n$, the function $y \circ \theta^\#: \left[ 0, \theta (\infty) \right[ \mapsto \mathbb R^n $ does not depend on the particular  representative of $[(\theta,y)]$. In particular, $y \circ \theta^\# = x$ for every $(\theta, y) \in [(t,x)]$.
Following this argument, we identify each absolutely continuous function $x $ defined on a maximal interval with the corresponding Fr\'echet curve $[(t,x)]$.

For every $[(\theta, y)] \in \mathcal F_n$, the function $y \circ \theta^\#$ has bounded variation on compact subintervals of $[0, \theta (\infty)[$.
Due to Lemma \ref{L AC reparameterization}, $y \circ \theta^\#$ is absolutely continuous if and only if the set $\left\{ t \geq 0: \dot \theta (t) =0 , \ \dot y(t) \neq 0 \right\}$ has zero Lebesgue measure.

The following proposition shows that every function of locally bounded variation can be 'lifted' to a Fr\'echet curve by virtue of the transformation $[(\theta,y)] \mapsto y \circ \theta^\#$.

\begin{proposition}
\label{P lift of functions of bounded variation}
If a function $x:[0,T[ \mapsto \mathbb R^n$ has finite variation on every compact subinterval of $[0,T[$, then there exists  $(\theta,y) \in \mathcal Y_n$ such that $x(t) = y \circ \theta^\#(t)$ for every $t \in [0,T[$, a continuity point of $x$. $\square$
\end{proposition}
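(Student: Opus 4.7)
The plan is to realise $x$ as a reparameterization of an absolutely continuous curve in space-time by constructing a \emph{graph completion}: I traverse the graph of $x$ as usual on the set of continuity points, and insert a ``vertical'' arc at each discontinuity, parameterising the whole by a variable proportional to time-plus-traversed-distance.

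Concretely, let $V(t) = \mathrm{Var}(x;[0,t])$, which is finite on each compact subinterval by hypothesis, and set $\phi(t) = t + V(t)$, a strictly increasing (possibly discontinuous) function with $\phi(0)=0$. Denote by $J$ the (at most countable) jump set of $x$ and set $\Phi = \lim_{t \to T^-}\phi(t)\in(0,+\infty]$. I would define $(\theta,y):[0,\Phi[\,\to\,[0,T[\times\mathbb R^n$ piecewise: wherever $\phi$ is continuous and hence locally invertible, put $\theta = \phi^{-1}$ and $y = x\circ\theta$; on each jump interval $[\phi(t_i^-),\phi(t_i^+)]$ associated with $t_i\in J$, set $\theta \equiv t_i$ and let $y$ traverse at unit speed the piecewise linear arc $x(t_i^-)\to x(t_i)\to x(t_i^+)$, whose total length $|x(t_i)-x(t_i^-)|+|x(t_i^+)-x(t_i)|$ equals $\phi(t_i^+)-\phi(t_i^-)$.

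The main technical step is to show that $(\theta,y)$ is absolutely continuous on $[0,\Phi[$. Using $|x(t')-x(t)|\le V(t')-V(t)$ for $t\le t'$, one obtains, for all $0\le s_1<s_2<\Phi$,
\[
s_2-s_1 = \bigl(\theta(s_2)-\theta(s_1)\bigr) + \bigl(V(\theta(s_2))-V(\theta(s_1))\bigr) \ge |\theta(s_2)-\theta(s_1)| + |y(s_2)-y(s_1)|,
\]
so $(\theta,y)$ is $1$-Lipschitz in the $\ell^1$ norm; the pieces fit together continuously by construction, $\theta(0)=0$, and $\dot\theta\ge 0$ almost everywhere.

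Membership in $\mathcal Y_n$ further demands $\ell(\infty)=+\infty$: this is automatic if $\Phi=+\infty$, and otherwise I would extend by setting $\theta(s)=\theta(\Phi^-)+(s-\Phi)$, $y(s)=y(\Phi^-)$ for $s\ge\Phi$, which preserves absolute continuity and monotonicity of $\theta$ while contributing infinite length. The identity $y\circ\theta^{\#}(t)=x(t)$ at a continuity point $t$ of $x$ is then immediate: continuity of $\phi$ at $t$ forces $\theta$ to attain the value $t$ at the single point $s=\phi(t)$, so $\theta^{\#}(t)=\phi(t)$ and $y(\phi(t))=x(t)$ by construction. The main obstacle is the bookkeeping when $J$ is dense in $[0,T[$; but local finiteness of $V$ makes the jump contributions summable, so the piecewise definition remains well-defined and the Lipschitz estimate above absorbs all cases uniformly.
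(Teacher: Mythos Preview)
Your construction is correct in outline but follows a genuinely different route from the paper. The paper does \emph{not} build the graph completion explicitly: instead it takes a refining sequence of partitions of $[0,T]$, forms the piecewise-linear interpolants $x_k$ of $x$ through the partition points, passes to the canonical (arc-length) parameterizations $(\theta_k,y_k)$ of the graphs $(t,x_k(t))$, and extracts a uniform limit $(\theta,y)$ by Ascoli--Arzel\`a, finally checking that $y\circ\theta^{\#}(t)=x(t)$ at continuity points. Your direct construction via $\phi(t)=t+V(t)$ is more explicit and avoids the compactness step entirely; it also makes the Lipschitz constant of $(\theta,y)$ visible. The paper's argument, on the other hand, sidesteps the bookkeeping for dense jump sets by pushing everything through a limit of genuinely piecewise objects, which fits the paper's broader approximation philosophy.

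One point in your write-up needs repair. The displayed identity
\[
s_2-s_1 = \bigl(\theta(s_2)-\theta(s_1)\bigr) + \bigl(V(\theta(s_2))-V(\theta(s_1))\bigr)
\]
is false whenever $s_1$ or $s_2$ lies in a jump interval: there $\theta$ is constant, so the right-hand side vanishes while $s_2-s_1>0$. The identity holds only when both $s_i$ lie in the range of $\phi$, where $s_i=\phi(\theta(s_i))$. The \emph{inequality} you actually need, $(\theta(s_2)-\theta(s_1))+|y(s_2)-y(s_1)|\le s_2-s_1$, is still true in all cases, but it requires a short case split (or, more cleanly, observing that both the ``graph'' portion and the inserted unit-speed arcs have $|\dot\theta|+|\dot y|\le 1$ almost everywhere, and that the pieces concatenate with matching endpoints). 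With that correction the argument goes through.
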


\begin{proof}
See Appendix (Subsection \ref{SP P lift of functions of bounded variation}).
\end{proof}

Note that the mapping $[(\theta,y)] \mapsto y \circ \theta^\#$ is not one-to-one. Indeed for each function of locally bounded variation $x:[0,T[ \mapsto \mathbb R^n$, there are infinitely many $[(\theta,y)] \in \mathcal F_n$, such that $y\circ \theta^\#=x$. To see this, consider the following example.

\begin{example}
\label{Ex lift of discontinuous curve}
For a discontinuous  function $x:[0,+\infty[ \mapsto \mathbb R^2$ defined as
\[
\begin{array}{ll}
x_1(t)=x_2(t)=0, & \text{for } t<1, \smallskip \\
x_1(t)=0, \ \ x_2(t)=1, & \text{for } t\geq 1.
\end{array}
\]
Every Fr\'echet curve $[(\theta,y)]$ in space-time,  consisting of the concatenation of the following arcs
\begin{itemize}
\item[{\bf i.}]
the segment of straight line from the point $(0,0,0)$ to the point $(1,0,0)$;
\item[{\bf ii.}]
an absolutely continuous Fr\'echet curve from the point $(1,0,0)$ to the point $(1,0,1)$, contained in the plane $\left\{ (1,x_1,x_2): (x_1,x_2) \in \mathbb R^2 \right\}$;
\item[{\bf iii.}]
the ray $\left\{ (1+t,0,1), t \geq 0 \right\}$
\end{itemize}
satisfies $y \circ \theta^\# (t) =x(t) \ \forall t \geq 0$ (see Figure \ref{Fg lifts of function}). $\square$
\begin{figure}[h] \vspace{-0.7cm}
\begin{center}
\begin{minipage}[t]{12cm}
\includegraphics*[height=10cm]{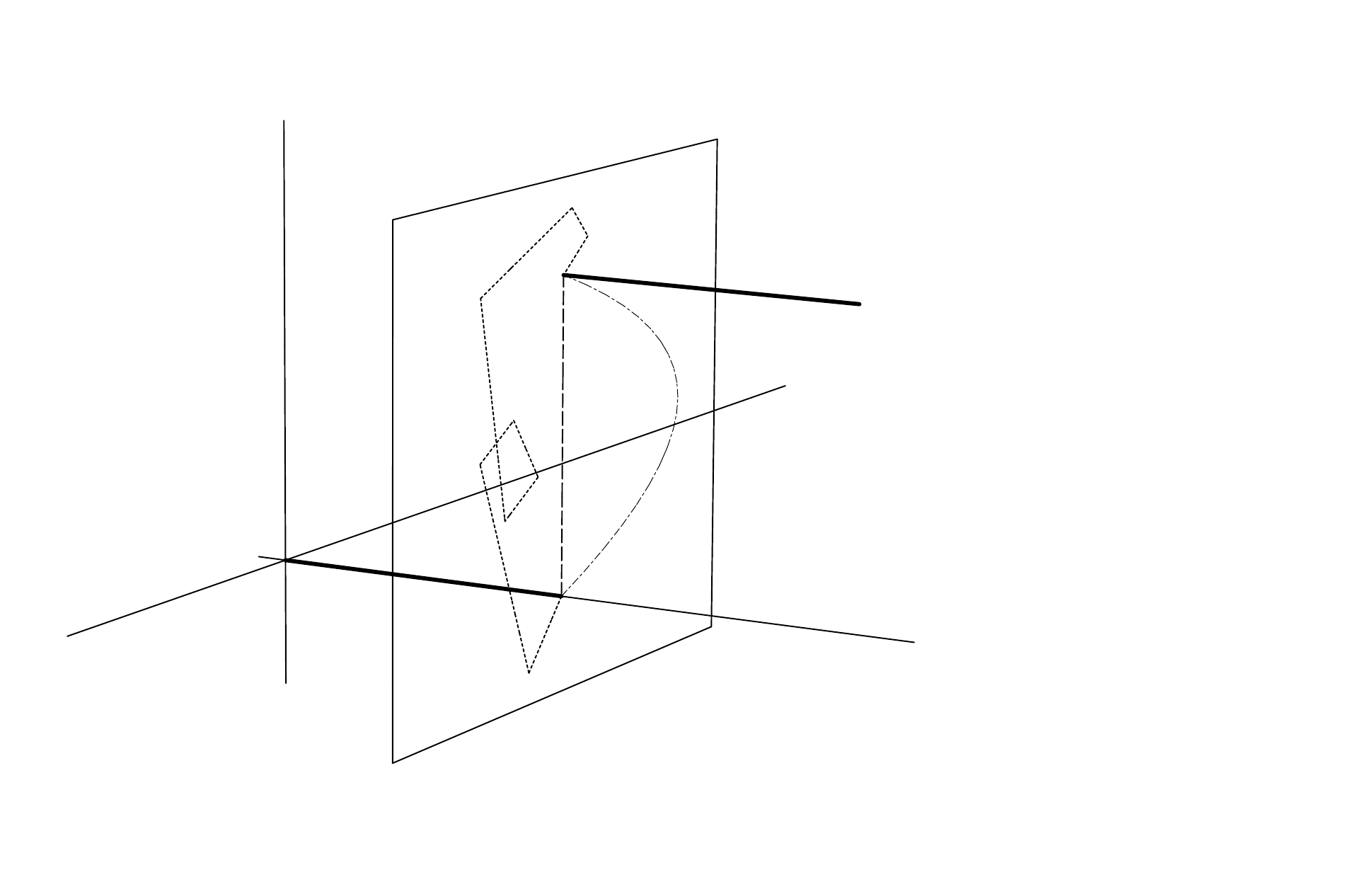}

\vspace{-2.8cm} \hspace{10cm} $t$

\vspace{-3.4cm} \hspace{8.5cm} $x_1$

\vspace{-3.4cm} \hspace{3.2cm} $x_2$

\vspace{5.9cm} \hspace{4.5cm} $t=1$

\end{minipage}
\end{center}
\vspace{-1.5cm}

\caption{
\label{Fg lifts of function}
The discontinuous function $x$ (solid line) and some Fr\'echet curves satisfying $y\circ \theta ^\#=x$ (different dashed lines).
}
\end{figure}
\end{example}

\begin{remark}
\label{Rm graphs}
The \emph{graph} of a function $x:[0,T[ \mapsto \mathbb R^n$ is the set $\Gamma_x = \left\{ (t,x(t)): t \in [0,T[ \right\}$.
If $x(t)$ is absolutely continuous, then the function $t \mapsto (t,x(t))$ is  absolutely continuous parameterization of $\Gamma_x$.
Thus, Fr\'echet curves in space-time can be seen as  generalizations  of absolutely continuous graphs to the class of functions of locally bounded variation. $\square$
\end{remark}

Fr\'echet curves in space-time coincide with {\it graph completions} in the terminology of \cite{BressanRampazzo88}.

\subsection{Fr\'echet generalized controls}
\label{SS Generalized controls}

By  \emph{ordinary control}, we understand any locally integrable function $u: [0,+\infty[ \mapsto \mathbb R^k$.
The control system \eqref{affine system} can be represented as
\begin{equation}
\label{Eq affine system U}
\dot x(t)= f(x(t)) + G(x(t)) \dot U (t) ,
\end{equation}
where $U(t) = \int _0^t u(s) ds $.

The function $t \mapsto (t,U(t))$ is a representative of the Fr\'echet curve $[(t,U)]$. By construction $U(0)=0$; besides local integrability of $u$ guarantees that $(t,U) \in \mathcal Y_{k,T}$, and thus $[(t,U)] \in \mathcal F_{k,T}$, for every $T\in ]0,+\infty[$.

For each $T \in ]0,+\infty[$, we introduce the set
\[
\mathcal Y_{k,T}^0 = \left\{ (V,W) \in \mathcal Y_{k,T}: W(0)=0 \right\} ,
\]
and define the space of \emph{Fr\'echet generalized controls} on the interval $[0,T]$ to be the set of  Fr\'echet curves, whose  representatives belong to   $\mathcal Y_{k,T}^0$, that is
\[
\mathcal F_{k,T}^0 = \{ [(V,W)] \in \mathcal F_{k,T}: W(0)=0 \}
.
\]

According to Convention \ref{Cv extension Y}, we identify each generalized control $[(V,W)] \in \mathcal F_{k,T}^0$ with it's segment $[(V,W)] \cap \left( [0,T] \times \mathbb R^k \right)$, and provide this space with the strengthened Fr\'echet metric:
\begin{align*}
d_T^+\left( [(V_1,W_1)], [(V_2,W_2)] \right) = &
d_T  \left( [(V_1,W_1)], [(V_2,W_2)] \right) +
\\ & +
\left| \ell_{(V_1,W_1)} (V_1^{\#}(T)) -  \ell_{(V_2,W_2)} (V_2^{\#}(T)) \right| .
\end{align*}

A Fr\'echet curve $[(V,W)] \in \mathcal F_{k,T}^0$ coincides with an ordinary control in the interval $[0,T]$ if and only if the set $\{ t: \dot V(t) = 0 \ \text{and} \ \dot W(t) \neq 0 \}$ has zero Lebesgue measure. In that case, $U=W\circ V^\#$ is absolutely continuous, and the corresponding ordinary control is $u(t) = \frac{d}{dt}\left( W \circ V^\# \right)(t)$.

The following proposition demonstrates that every generalized control can be approximated by sequences of ordinary controls.
\begin{proposition}
\label{P density of ordinary controls}
The space of ordinary controls $\left\{ \left[(t,\int_0^t u(s) ds ) \right] : u \in L_1^k[0,T] \right\} $ is dense in $\left( \mathcal F_{k,T}^0, d_T^+\right) $. $\square$
\end{proposition}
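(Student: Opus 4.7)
The plan is to approximate each $[(V,W)] \in \mathcal F_{k,T}^0$ by ordinary controls obtained from a small perturbation of the canonical representative that forces the time component to be strictly monotone. Starting from the canonical parametrization $(V,W)$ (so that $\sqrt{\dot V^2+|\dot W|^2}\equiv 1$ a.e.), I would set
\[
V_n(s) = V(s) + s/n, \qquad W_n(s) = W(s),
\]
and then show, first, that each $[(V_n, W_n)]$ is an ordinary control, and second, that $d_T^+\bigl([(V_n, W_n)], [(V,W)]\bigr) \to 0$.

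Since $\dot V_n \geq 1/n > 0$ a.e., the map $V_n$ is a strictly increasing absolutely continuous bijection of $[0,+\infty[$ with absolutely continuous inverse $\sigma_n := V_n^{-1}$. The exceptional set in Lemma \ref{L AC reparameterization}(b) is empty, so $U_n := W \circ \sigma_n$ is absolutely continuous; combined with $U_n(0)=W(0)=0$, this gives $U_n(t) = \int_0^t u_n(\tau)\,d\tau$ with $u_n := \dot U_n \in L_1^k[0,T]$. The identity $(t, U_n(t)) = (V_n, W_n) \circ \sigma_n$ exhibits $\sigma_n \in \mathcal T$ as a direct witnessing reparametrization for Definition \ref{D orientation}, so $[(V_n, W_n)] = \bigl[(t, \int_0^{\,\cdot}\!u_n)\bigr]$ is an ordinary control.

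For the convergence, set $s_T := V^\#(T)$ and $s_{T,n} := V_n^{-1}(T)$. The relation $V(s_{T,n}) + s_{T,n}/n = T$ together with continuity of $V$ forces $s_{T,n} \uparrow s_T$ as $n \to \infty$. Testing $\rho_T$ against the identity reparametrization and using Convention \ref{Cv extension Y}, I would split the sup-norm estimate into $[0, s_{T,n}]$ (where the $V$-components differ by $s/n \leq s_T/n$ and the $W$-components coincide), the plateau-mismatch interval $[s_{T,n}, s_T]$ (where the canonical $1$-Lipschitz bound gives $|W(s) - W(s_{T,n})| \leq s_T - s_{T,n}$ and $|T - V(s)| \leq s_{T,n}/n$), and $[s_T,+\infty[$ (where only $W$ contributes, still bounded by $s_T - s_{T,n}$). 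All bounds vanish, so $d_T \to 0$; for the length component,
\[
\ell_{(V_n, W)}(s_{T,n}) = \int_0^{s_{T,n}}\! \sqrt{(\dot V + 1/n)^2 + |\dot W|^2}\,ds \longrightarrow \int_0^{s_T}\!1\,ds = \ell_{(V,W)}(s_T),
\]
by dominated convergence, so $d_T^+ \to 0$.

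The main technical obstacle I expect is the bookkeeping on the plateau-mismatch interval $[s_{T,n}, s_T]$, where the two truncated curves behave qualitatively differently: one has already been frozen by Convention \ref{Cv extension Y} while the other is still evolving. What rescues the estimate is precisely that the canonical parametrization makes $W$ globally $1$-Lipschitz, turning the vanishing parameter gap $s_T - s_{T,n}$ into a uniform bound on the $W$-component. This is the technical reason for insisting on the canonical representative as the base point of the perturbation, rather than an arbitrary representative of $[(V,W)]$.
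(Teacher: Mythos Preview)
Your overall strategy matches the paper's---perturb the time component $V$ to make it strictly increasing while keeping $W$ fixed---but the specific perturbation $V_n(s)=V(s)+s/n$ breaks precisely when the generalized control has a jump at the terminal time $t=T$, i.e., when the canonical $V$ has a plateau at level $T$.

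The claim ``$s_{T,n}\uparrow s_T$'' is false in that case. From $V(s_{T,n})=T-s_{T,n}/n<T$ you only get $s_{T,n}\le a:=\sup\{s:V(s)<T\}$, and the limit of $s_{T,n}$ is $a$, not $s_T=V^\#(T)=\sup\{s:V(s)\le T\}$. When $V$ is constant equal to $T$ on a nondegenerate interval $[a,s_T]$, the truncated approximant freezes at $(T,W(a))$ while the truncated target ends at $(T,W(s_T))$; since $|\dot W|=1$ on the plateau, these differ by $s_T-a>0$. Concretely, for $k=1$, $T=1$, and the canonical curve
\[
(V,W)(s)=\begin{cases}(s,0)&s\in[0,1],\\(1,s-1)&s\in[1,2],\\(s-1,1)&s\ge 2,\end{cases}
\]
one finds $s_{T,n}=n/(n+1)\to 1$ while $s_T=2$, and the truncated curves end at $(1,0)$ and $(1,1)$ respectively, so $d_T\bigl([(V_n,W)],[(V,W)]\bigr)\ge 1$ for every $n$. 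The length term fails too: $\ell_{(V_n,W)}(s_{T,n})=1$ for all $n$, while $\ell_{(V,W)}(s_T)=2$.

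The paper avoids this by building the perturbation so that $V_\varepsilon\bigl(V^\#(T)\bigr)=T$ exactly (via the rescaled $\max(\dot V,\varepsilon/T)$), which kills the plateau-mismatch interval entirely. Your argument is repaired by the same device: replace $V_n(s)=V(s)+s/n$ by $V_n(s)=\dfrac{T}{T+s_T/n}\bigl(V(s)+s/n\bigr)$ on $[0,s_T]$, so that $V_n(s_T)=T$. Then $s_{T,n}=s_T$ for every $n$, your ``plateau-mismatch'' interval is empty, and the remaining estimates (uniform convergence on $[0,s_T]$ and dominated convergence for the length) go through as you wrote them.
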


\begin{proof}
We introduce in $\mathcal Y_{k,T}$ a semimetric $\rho_T^+$:
\begin{align}
\notag
\rho_T^+ \left( (V_1,W_1), ( V_2, W_2) \right) = &
\rho_T \left( (V_1,W_1), ( V_2, W_2) \right) +
\\ \label{Eq RhoPlus} & +
\left| \ell_{(V_1,W_1)} (V_1^{\#}(T)) -  \ell_{(V_2,W_2)} (V_2^{\#}(T)) \right| .
\end{align}

Fix arbitrary $(V,W ) \in \mathcal Y_{k,T}^0$. For each $\varepsilon>0$, let
\[
V_\varepsilon(t) = \left\{
\begin{array}{ll}
\frac{T}{\int_0^{ V^{\#}(T)} \max ( \dot V, \frac{\varepsilon}{T}) d\tau}
\int_0^t \max ( \dot V, \frac{\varepsilon}{T}) d\tau ,
&
\text{for } t \leq  V^\#(T),
\smallskip \\
T+t- V^\#(T)
&
\text{for } t > V^\#(T) .
\end{array}
\right.
\]
$V_\varepsilon $ admits absolutely continuous inverse and therefore $U_\varepsilon (t) = W \circ V_\varepsilon^{-1}(t)$ is absolutely continuous with $U_\varepsilon(0)=0$.

   One can check that $V_\varepsilon^{-1}(T)= V^{\#}(T) $, $V_\varepsilon$ converges to $V$ uniformly in  $[0,V^\#(T)]$, as  $\varepsilon \rightarrow 0^+$, and
$$\lim\limits_{\varepsilon \rightarrow 0^+} \ell_{(V_\varepsilon,W)}(V^\#(T)) = \ell_{(V ,W)}(V^\#(T)) . $$
Therefore,
$\lim\limits_{\varepsilon \rightarrow 0^+} \rho_T^+ \left( (V_\varepsilon,W), ( V, W) \right)=0 , $
which implies
$
\lim\limits_{\varepsilon \rightarrow 0^+} d_T^+ \left( [(t,U_\varepsilon), [( V, W)] \right)=0
.$
\end{proof}

The space of generalized controls has the following compactness property:
\begin{proposition}
\label{P compactness}
Every sequence of ordinary controls bounded in $L_1^k[0,T]$ admits a subsequence $\{u_i\}_{i \in \mathbb N}$ such that
$\left\{ \left[ (t,U_i) \right] \right\}_{i \in \mathbb N}$ converges in $\left( \mathcal F_{k,T}^0, d_T^+ \right)$. $\square$
\end{proposition}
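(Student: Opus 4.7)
I plan to prove compactness via Arzel\`a-Ascoli applied to the canonical (arc-length) representatives $\gamma_i := (t,U_i)\circ \ell_i^{\#}$. Writing $M := \sup_i \|u_i\|_{L_1^k[0,T]}$ and $U_i(t)=\int_0^t u_i(s)\,ds$, the length $\ell_i := \ell_{(t,U_i)}(T) = \int_0^T \sqrt{1+|u_i|^2}\,dt$ lies in the compact interval $[T, T+M]$, since $\int_0^T |u_i|\,dt \le M$. Passing to a first subsequence, I arrange $\ell_i \to L^\ast \in [T,T+M]$. Extending $\gamma_i$ by the constant $(T, U_i(T))$ on $[\ell_i, T+M]$ via Convention \ref{Cv extension canonical}, each $\gamma_i$ becomes a $1$-Lipschitz map on the common compact interval $[0,T+M]$, pinned to $0$ at $s=0$ and bounded by $T+M$ in each coordinate.

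Equicontinuity and equiboundedness allow Arzel\`a-Ascoli: along a further subsequence, $\gamma_i \to \gamma = (V,W)$ uniformly on $[0,T+M]$, where $\gamma$ is $1$-Lipschitz, $V$ is nondecreasing, $V(0)=W(0)=0$, and $V(L^\ast)=T$. Prolonging $V$ linearly on $[L^\ast,\infty)$ (keeping $W$ constant) produces a representative $(V,W)\in\mathcal Y_{k,T}^0$, and $[(V,W)]\in\mathcal F_{k,T}^0$ is my candidate limit. Because $\gamma_i$ parametrizes the portion of $[(t,U_i)]$ lying in $[0,T]\times\mathbb R^k$ and $\gamma|_{[0,L^\ast]}$ parametrizes the corresponding portion of $[(V,W)]$, uniform convergence translates directly into $d_T\bigl([(t,U_i)],[(V,W)]\bigr)\to 0$.

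The main obstacle---and the crux of the proof---is the length term of $d_T^+$, namely $\ell_{(V,W)}(V^\#(T))=L^\ast$. Lower semicontinuity of arc-length under uniform convergence only yields $\ell_{(V,W)}(V^\#(T))\le L^\ast$, so I must upgrade this to equality. The natural tool, paralleling the proof of Theorem \ref{T continuity Frechet to W}, is a Radon-Riesz argument: the canonical parametrization gives $|\dot\gamma_i|\equiv 1$ a.e.\ on $[0,\ell_i]$, so the derivatives are bounded in every $L_p([0,T+M])$; extracting a weakly convergent subsequence $\dot\gamma_i\rightharpoondown\dot\gamma$ and verifying that the $L_p$-norms converge to the $L_p$-norm of $\dot\gamma$ (using the already-arranged convergence $\ell_i \to L^\ast$) would upgrade weak to strong convergence and force $|\dot\gamma|\equiv 1$ a.e.\ on $[0,L^\ast]$, yielding the required length equality. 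Matching the norms---i.e.\ ruling out concentration of length in ``microscopic oscillations'' that vanish in the uniform limit---is the delicate point, and I expect that a careful diagonal extraction, possibly combined with the freedom to absorb oscillation into vertical segments of the graph completion $[(V,W)]$, will close the gap.
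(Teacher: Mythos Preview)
Your overall strategy coincides with the paper's: pass to canonical (arc-length) representatives, bound the lengths by $T+M$, and invoke Arzel\`a--Ascoli on the common interval $[0,T+M]$. The paper's proof stops right there, simply asserting that a uniformly convergent subsequence of canonical representatives converges in $\bigl(\mathcal F_{k,T}^0,d_T^+\bigr)$; it does not address the length term at all. You go further and correctly isolate the issue: uniform convergence yields only $d_T$-convergence, while the arc-length of the limit curve is a priori only $\le L^\ast$ by lower semicontinuity.

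Your proposed Radon--Riesz repair, however, is circular. That theorem requires $\|\dot\gamma_i\|_{L_p}\to\|\dot\gamma\|_{L_p}$; the left-hand side tends to $(L^\ast)^{1/p}$, but the right-hand side is $\bigl(\ell_{(V,W)}(V^\#(T))\bigr)^{1/p}$, which is precisely the quantity you want to show equals $(L^\ast)^{1/p}$. In Theorem~\ref{T continuity Frechet to W} the analogous step works only because there $d^+$-convergence (hence length convergence) is the \emph{hypothesis}; here it is the desired \emph{conclusion}. Moreover, the gap is not an artifact of the method: take $T=k=1$ and $u_i(t)=(-1)^{\lfloor 2it\rfloor}$. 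Then $\|u_i\|_{L_1}=1$, every graph $[(t,U_i)]$ has length exactly $\sqrt 2$, and $d_T\bigl([(t,U_i)],[(t,0)]\bigr)\le\|U_i\|_\infty\le 1/(2i)\to 0$. The sequence is $d_T^+$-Cauchy, yet any $d_T$-limit must (under Convention~\ref{Cv extension Y}) be $[(t,0)]$, whose segment has length $1\ne\sqrt 2$; hence no subsequence converges in $d_T^+$. Your suggestion to ``absorb oscillation into vertical segments'' cannot succeed here because the oscillation is spread uniformly over $[0,1]$ rather than concentrated at a point. What the paper's argument (and yours, up to the final step) actually establishes is compactness in $\bigl(\mathcal F_{k,T}^0,d_T\bigr)$.
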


\begin{proof}
Let a sequence $\left\{ u_i \in L_1^k[0,T] \right\}_{i \in \mathbb N}$ be such that
\begin{equation}
\label{Z005}
\|u_i \|_{L_1^k[0,T]} \leq M, \qquad \forall i \in \mathbb N .
\end{equation}
For each $i \in \mathbb N$, let $(V_i,W_i)= \left( \ell_{(t,U_i)}^{-1},U_i \circ \ell_{(t,U_i)}^{-1} \right) $ be the canonical parameterization of $\left[ (t,U_i) \right]$.
By \eqref{Z005}, $\ell_{(t,U_i)}(T) \leq M+T, \ \forall i \in \mathbb N$.
The sequence $\left\{ (V_i,W_i) \right\}_{i \in \mathbb N}$ is uniformly bounded and equicontinuous in  $[0,T+M]$. Therefore by the Ascoli-Arzel\`a theorem, it admits a uniformly converging subsequence. It follows that the corresponding subsequence $\left\{ \left[ (V_{i_j},W_{i_j} ) \right] \right\}_{j \in \mathbb N}$ converges in $\left( \mathcal F_{k,T}^0, d_T^+ \right)$.
\end{proof}

Let us revise Example \ref{Ex noninvolutive system}.

\begin{example}
\label{Ex noninvolutive system c1}
Consider the controls $u^{i,\varepsilon}, \ i=1,2,3$ from Example \ref{Ex noninvolutive system}.
Let $U^{i,\varepsilon}$ be the respective primitives.
It can be shown that the limits
\[
\left[ (V_i,W_i) \right] = \lim_{\varepsilon \rightarrow 0^+} \left[ (t,U^{i,\varepsilon}) \right] , \qquad i=1,2,3,
\]
exist in $\left( \mathcal F_{n,T}^0, d_T^+ \right)$, and differ on the segment from the point $(0,0,0) $ to the point $(0,1,1)$, as shown on Figure \ref{Fg generalized controls}.
\begin{figure}[h] \vspace{-0.7cm}
\begin{center}
\begin{minipage}[t]{12cm}
\includegraphics*[height=10cm]{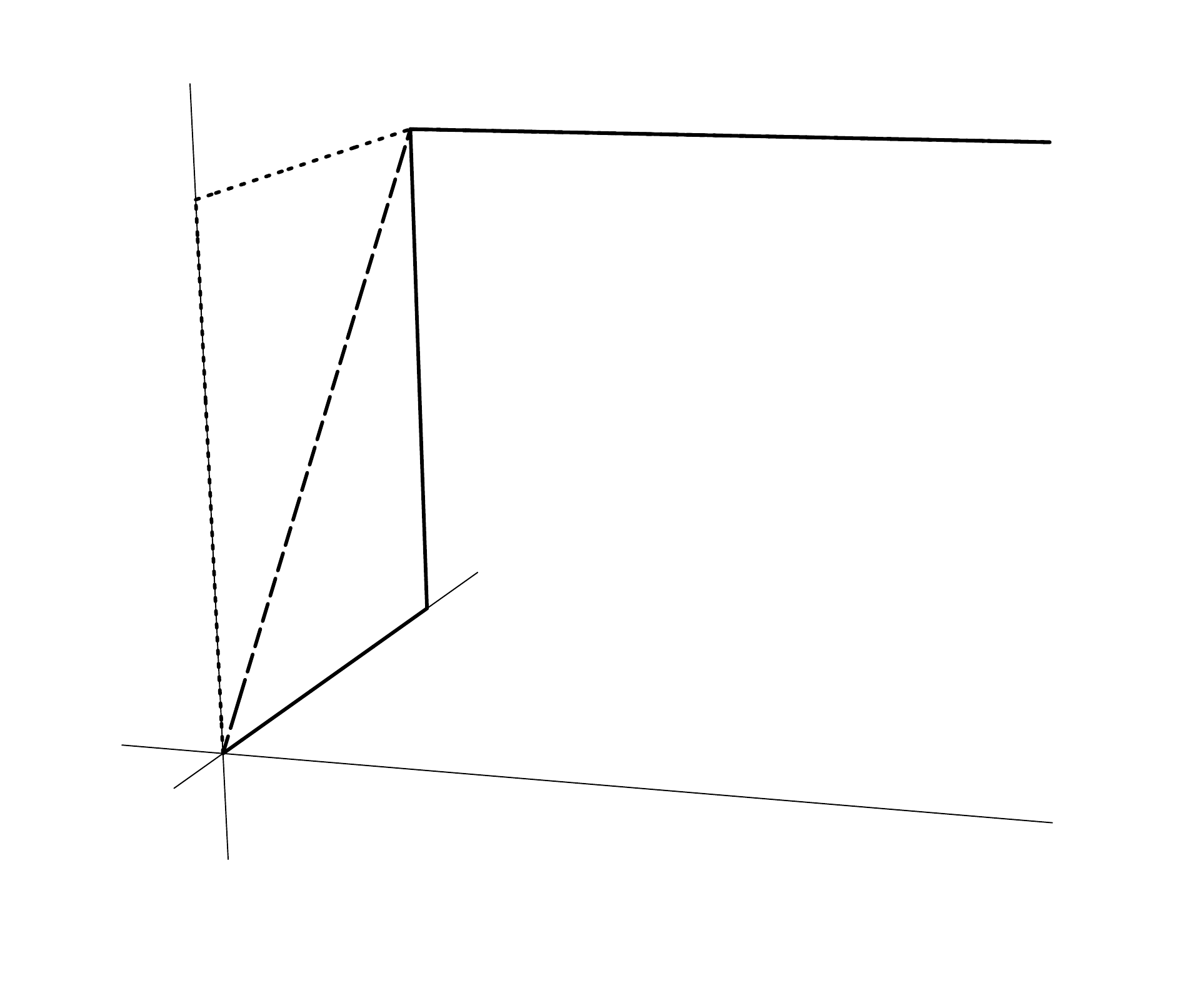}

\vspace{-2.5cm} \hspace{10cm} $t$

\vspace{-2.3cm} \hspace{4.6cm} $U_1$

\vspace{-5.4cm} \hspace{1.2cm} $U_2$

\end{minipage}
\end{center}
\vspace{-1.5cm}

\caption{
\label{Fg generalized controls}
The generalized controls from Example \ref{Ex noninvolutive system c1}: $\left[ (V_1,W_1) \right]$ (solid line), $\left[ (V_2,W_2) \right]$ (dashed line), and $\left[ (V_3,W_3) \right]$ (doted line).
}
\end{figure}
One can directly  compute
\begin{align*}
&
d_T^+ \left( \left[ (V_1,W_1) \right], \left[ (V_2,W_2) \right] \right) =
d_T^+ \left( \left[ (V_3,W_3) \right], \left[ (V_2,W_2) \right] \right) =
2 - \frac{1}{\sqrt{2}},
\\ &
d_T^+ \left( \left[ (V_1,W_1) \right], \left[ (V_3,W_3) \right] \right) = 1 . \ \square
\end{align*}
\end{example}

The ability to characterize impulses  by a path in space-time  provides  a "resolution" of an impulse, which  takes place in zero time, as Example \ref{Ex noninvolutive system c1} illustrates. This feature is crucial for dealing  with the  case,  where the controlled vector fields in the system \eqref{affine system} do not commute.

\subsection{Fr\'echet generalized paths}
\label{SS Generalized trajectories}

We call \emph{ordinary path} any absolutely continuous function $x:[0,T[ \mapsto \mathbb R^n$, with  $T \in ]0,+\infty]$ being  maximal in the sense that $x$ does not admit an absolutely continuous extension onto  any interval $[0,\hat T[\supset [0,T]$.

Using the notation of Section \ref{SS Frechet curves in space-time}, $x \mapsto (t,x)$ and $x \mapsto [(t,x)]$ are one-to-one mappings from the space of ordinary trajectories into $\mathcal Y_n$ and $\mathcal F_n$, respectively.

We identify each ordinary path  $x$ with the corresponding Fr\'echet curve $\left[ (t,x) \right] \in \mathcal F_n$, and define the space of \emph{generalized paths} to be the space $\mathcal F_n$,  provided with the metric $d_T$ (with $T\in ]0,+\infty[$ fixed).

Since the metric $d_T$ is fixed, we follow Convention \ref{Cv extension Y} and identify each generalized path $[(\theta,y)] \in \mathcal F_n$ with its segment $[(\theta,y)] \cap [0,T] \times \mathbb R^n$.
In particular, any ordinary path $x:[0,\tilde T[ \mapsto \mathbb R^n$, with $\tilde T>T$, is identified with its restriction to the interval $[0,T]$.

According to Section \ref{SS Frechet curves in space-time}, $\mathcal F_{n,T}$ is the space of  generalized paths defined on the time interval $[0,T]$ and extendable  beyond $[0,T]$.
Taking into account the conventions above, for any ordinary path $x$, $(t,x) $ is a representative of some $[(\theta,y)] \in \mathcal F_{n,T}$ if and only if $x \in AC^n[0,T]$.
That is, we identify the space of ordinary paths that can be extended beyond $[0,T]$ with the set
\[
\left\{ [(t,x)]: x \in AC^n[0,T] \right\} \subset \mathcal F_{n,T} .
\]

For generalized paths, we have an analogue of  Proposition \ref{P density of ordinary controls}:

\begin{proposition}
\label{P density of ordinary trajectories}
The space of ordinary paths defined in the interval $[0,T]$, $\{[(t,x)] : x \in AC^n[0,T] \}$ is dense in $\left( \mathcal F_{n,T}, d_T^+ \right)$ (and therefore, it is also dense in $\left( \mathcal F_{n,T}, d_T \right)$). $\square$
\end{proposition}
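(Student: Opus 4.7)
The plan is to adapt the construction from the proof of Proposition \ref{P density of ordinary controls} verbatim, with $y$ in the role of $W$; the condition $W(0)=0$ imposed there plays no role in the argument, so dropping it costs nothing. Given a representative $(\theta,y)\in\mathcal{Y}_{n,T}$, so that $\theta^{\#}(T)<+\infty$, I define for each $\varepsilon>0$
\[
\theta_\varepsilon(t)=\begin{cases} c_\varepsilon\displaystyle\int_0^t\max\!\bigl(\dot\theta(s),\tfrac{\varepsilon}{T}\bigr)\,ds, & t\le\theta^{\#}(T),\\[4pt] T+t-\theta^{\#}(T), & t>\theta^{\#}(T),\end{cases}
\]
with $c_\varepsilon=T/\!\int_0^{\theta^{\#}(T)}\max(\dot\theta,\varepsilon/T)\,ds$ chosen so that $\theta_\varepsilon(\theta^{\#}(T))=T$. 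Since $\dot\theta_\varepsilon\ge c_\varepsilon\varepsilon/T>0$ a.e.\ on $[0,\theta^{\#}(T)]$, $\theta_\varepsilon$ admits an absolutely continuous inverse $\theta_\varepsilon^{-1}$ there, and $\theta_\varepsilon$ is strictly increasing on all of $[0,+\infty[$.

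By Lemma \ref{L AC reparameterization}(b), $x_\varepsilon:=y\circ\theta_\varepsilon^{-1}$ is absolutely continuous on $[0,T]$; and applying Lemma \ref{L orientation} with $\alpha_1(t)=t$, $\alpha_2(t)=\theta_\varepsilon^{-1}(t)$ gives $[(t,x_\varepsilon)]=[(\theta_\varepsilon,y)]$, so $x_\varepsilon|_{[0,T]}\in AC^n[0,T]$ represents an ordinary path in $\mathcal{F}_{n,T}$. It therefore suffices to prove $d_T^+\!\bigl([(\theta_\varepsilon,y)],[(\theta,y)]\bigr)\to 0$ as $\varepsilon\to 0^+$. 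Since $(\theta_\varepsilon,y)$ and $(\theta,y)$ share their second coordinate and satisfy $\theta_\varepsilon^{\#}(T)=\theta^{\#}(T)$, I obtain the bound
\[
d_T\!\bigl([(\theta_\varepsilon,y)],[(\theta,y)]\bigr)\le\rho_T\!\bigl((\theta_\varepsilon,y),(\theta,y)\bigr)=\|\theta_\varepsilon-\theta\|_{L_\infty[0,\theta^{\#}(T)]}.
\]

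Now $\max(\dot\theta,\varepsilon/T)\to\dot\theta$ pointwise a.e.\ and, by monotone convergence, in $L^1[0,\theta^{\#}(T)]$; hence $c_\varepsilon\to 1$ and $\theta_\varepsilon\to\theta$ uniformly, killing the Fréchet part. For the length part, write $\ell_{(\theta_\varepsilon,y)}(\theta^{\#}(T))=\int_0^{\theta^{\#}(T)}\sqrt{\dot\theta_\varepsilon^{\,2}+|\dot y|^2}\,dt$; since $c_\varepsilon\le 1$, the integrand is dominated, for $\varepsilon\le\varepsilon_0$, by $\dot\theta+\varepsilon_0/T+|\dot y|\in L^1[0,\theta^{\#}(T)]$, so dominated convergence yields $\ell_{(\theta_\varepsilon,y)}(\theta^{\#}(T))\to\ell_{(\theta,y)}(\theta^{\#}(T))$. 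Together these give $d_T^+\!\bigl([(t,x_\varepsilon)],[(\theta,y)]\bigr)\to 0$, and the parenthetical density in $(\mathcal{F}_{n,T},d_T)$ is immediate from $d_T\le d_T^+$. The main (mild) obstacle is checking that the reparameterization produces the \emph{same} Fréchet curve—this is precisely the role of Lemmas \ref{L orientation} and \ref{L AC reparameterization}—while the remaining convergence analysis is routine.
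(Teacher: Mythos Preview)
Your proof is correct and is precisely the ``trivial adaptation'' the paper alludes to: you reproduce the construction from Proposition~\ref{P density of ordinary controls} with $(\theta,y)$ in place of $(V,W)$, correctly observe that the initial condition $W(0)=0$ plays no role, and supply the routine convergence details (monotone convergence for $c_\varepsilon\to 1$, dominated convergence for the lengths) that the paper leaves implicit. The approach is identical to the paper's.
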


\begin{proof}
A trivial adaptation of the proof of Proposition \ref{P density of ordinary controls}.
\end{proof}

\section{The input-to-trajectory map}
\label{S input-to-trajectory map}

This section is centered on  Theorem \ref{T extension of input-to trajectory map}, which establishes existence and uniqueness of a continuous extension of the input-to-trajectory map onto the space of Fr\'echet generalized controls. The extended map  takes values in  the space of generalized paths.
\medskip

We use capital letters for the  elements of $\mathcal Y_{k,T}^0$ and small letters for  their first-order derivatives (i.e., $(v,w)=(\dot V, \dot W)$, with $(V,W) \in \mathcal Y_{k,T}^0$).

For each locally integrable $u :[0,+\infty [ \mapsto \mathbb R^k$, let $x_u$ denote the corresponding trajectory of the system \eqref{affine system}, starting at a point $x_u(0)=x^0$, defined on a  maximal interval.
We will show that the input-to-trajectory mapping $u \mapsto x_u$ defines a unique mapping $[(t,U)] \mapsto [(t,x_u)]$, which in its turn  admits a unique continuous extension onto $\mathcal F_{k,T}^0$.

There is the following  intuition behind this.
Fix an ordinary control $u$, and suppose that $x_u$ is well defined in the interval $[0,T]$. Pick a  representative  $(V,W) \in [(t,U)]$, and let $y=x_u \circ V$. It follows that
\begin{align}
\label{Eq reduced system y}
y(0)=x_0, \qquad \dot y(t) = f(y(t))v(t) + G(y(t))w(t) \quad \text{a.e. } t \in [0, V^\#(T)].
\end{align}
Thus, if we denote by $y_{(v,w)}$ the unique trajectory of \eqref{Eq reduced system y}, then one may expect that
$(V,y_{(v,w)}) \in [(t,x_u)]$ for every $(V,W) \in [(t,U)]$.

The following Proposition shows that this is also true for  generalized controls, thus providing a "natural" extension of the input-to-trajectory map.

\begin{proposition}
\label{P parameterization invariance of trajectories}
The mapping
\begin{align}
\label{Eq input-to-trajectory map}
[(V,W)] \in \mathcal F_{k,T}^0 \mapsto \left[ (V,y_{(v,w)}) \right] \in \mathcal F_n
\end{align}
is properly defined, i.e., it does not depend on the choice of $(V,W) \in [(V,W)]. \
\square$
\end{proposition}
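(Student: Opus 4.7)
I would proceed by applying Lemma \ref{L orientation} to produce a common reparametrization of two arbitrary representatives of $[(V,W)]$, and then transport the resulting coincidence to the pair $(V_i, y_{(v_i,w_i)})$ through uniqueness for the Carath\'eodory ODE \eqref{Eq reduced system y}. Concretely, fix $(V_1,W_1), (V_2,W_2) \in [(V,W)]$; Lemma \ref{L orientation} supplies absolutely continuous nondecreasing functions $\alpha_1, \alpha_2: [0,+\infty[ \mapsto [0,+\infty[$ with $\alpha_i(0)=0$ such that $V_1 \circ \alpha_1 \equiv V_2 \circ \alpha_2$ and $W_1 \circ \alpha_1 \equiv W_2 \circ \alpha_2$. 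The aim is to show that these same $\alpha_i$ also witness the equivalence of $(V_1, y_{(v_1,w_1)})$ and $(V_2, y_{(v_2,w_2)})$.

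Set $z_i(t) := y_{(v_i,w_i)}(\alpha_i(t))$. Since the composition of an absolutely continuous function with a monotone absolutely continuous function is absolutely continuous, each $z_i$ is AC with $z_i(0)=x^0$. Writing \eqref{Eq reduced system y} in integral form and substituting $\tau = \alpha_i(t)$, the change-of-variable formula for Lebesgue integrals along an AC monotone map yields
\[
z_i(t) = x^0 + \int_0^t \Bigl[ f(z_i(s))\, (V_i \circ \alpha_i)'(s) + G(z_i(s))\, (W_i \circ \alpha_i)'(s) \Bigr]\, ds ,
\]
where I have used the chain-rule identity $(V_i \circ \alpha_i)'(s) = v_i(\alpha_i(s))\, \dot \alpha_i(s)$ almost everywhere. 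Because $V_1 \circ \alpha_1 \equiv V_2 \circ \alpha_2$ and $W_1 \circ \alpha_1 \equiv W_2 \circ \alpha_2$, the right-hand sides depend on $i$ only through $z_i$, so $z_1$ and $z_2$ solve identical Carath\'eodory initial value problems with locally Lipschitz coefficients. Uniqueness forces $z_1 \equiv z_2$, hence $(V_1, y_{(v_1,w_1)}) \circ \alpha_1 \equiv (V_2, y_{(v_2,w_2)}) \circ \alpha_2$, and Lemma \ref{L orientation} applied in the reverse direction gives $[(V_1, y_{(v_1,w_1)})] = [(V_2, y_{(v_2,w_2)})]$ in $\mathcal F_n$.

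The main technical care is required on sets where $\dot \alpha_i$ vanishes: there $v_i(\alpha_i(s))$ need not be meaningfully defined (the AC function $\alpha_i$ can send a positive-measure set into a null set where $v_i$ is unspecified), but the derivatives $(V_i \circ \alpha_i)'$ and $(W_i \circ \alpha_i)'$ that actually enter the reparametrized ODE are well defined a.e.\ as derivatives of AC functions, and they equal $v_i(\alpha_i)\,\dot\alpha_i$ interpreted as $0$ on the zero-velocity set. A secondary concern is that $y_{(v_1,w_1)}$ and $y_{(v_2,w_2)}$ may possess different finite maximal domains; here condition \textbf{(c)} of Lemma \ref{L orientation} together with Convention \ref{Cv extension canonical} and the monotonicity of the $\alpha_i$ ensure that the segments of the Fr\'echet curves $[(V_i, y_{(v_i,w_i)})]$ terminate at the same space-time point, so that the equivalence extends consistently to all of $[0,+\infty[$.
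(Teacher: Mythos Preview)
Your argument is correct and proceeds by a genuinely different route from the paper. The paper does not compare two arbitrary representatives directly; instead it routes everything through the \emph{canonical} representative. Concretely, given any $(V,W) \in [(\hat V,\hat W)]$ with $(\hat V,\hat W)$ canonical, the paper invokes Lemma~\ref{L AC reparameterization} (the arc-length reparametrization lemma, not Lemma~\ref{L orientation}) to show that $y_{(v,w)} \circ \ell_{(V,W)}^{\#}$ is absolutely continuous, computes its derivative via part~(c) of that lemma, verifies that it solves \eqref{Eq reduced system y} driven by $(\hat v,\hat w)$, and concludes by uniqueness that $y_{(v,w)} \circ \ell_{(V,W)}^{\#} = y_{(\hat v,\hat w)}$. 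Thus every $(V,y_{(v,w)})$ reparametrizes to the single curve $(\hat V,y_{(\hat v,\hat w)})$.

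Your approach instead pulls back two representatives to a common parameter via the $\alpha_1,\alpha_2$ of Lemma~\ref{L orientation}, so that the \emph{same} pair $(\alpha_1,\alpha_2)$ witnesses the equivalence of controls and of trajectories simultaneously. This is symmetric and conceptually transparent, and it avoids singling out the canonical element. The paper's route is marginally more economical (one comparison per representative rather than a pairwise one) and benefits from the explicit derivative formula in Lemma~\ref{L AC reparameterization}(c), which makes the ODE verification a one-line substitution; by contrast you must handle the change-of-variables and chain-rule issues on the zero-velocity set of $\alpha_i$ by hand, which you do correctly. Both arguments ultimately rest on Carath\'eodory uniqueness for \eqref{Eq reduced system y}, and both must confront the same maximal-domain bookkeeping; your closing remarks on condition~\textbf{(c)} of Lemma~\ref{L orientation} address this adequately.
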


\begin{proof}
Fix a generalized control $[(\hat V,\hat W)] \in \mathcal F_{k,T}^0$ and the  canonical representative  $(\hat V, \hat W)$. For any $(V,W) \in [(\hat V, \hat W)]$,
\[
(\hat V, \hat W) = (V,W) \circ \ell_{(V,W)}^{\#},
\qquad
(\hat v, \hat w) = \frac{(v,w)}{\sqrt{v^2+|w|^2}} \circ \ell_{(V,W)}^{\#} ,
\]
and it follows that
\[
\left( V, y_{(v,w)} \right) \circ \ell_{(V,W)}^{\#} =
\left( V\circ \ell_{(V,W)}^{\#} , y_{(v,w)} \circ \ell_{(V,W)}^{\#}  \right) =
\left( \hat V , y_{(v,w)} \circ \ell_{(V,W)}^{\#}  \right) .
\]
Since $\{ t: (v(t),w(t))=0 \wedge \dot y_{(v,w)}(t) \neq 0 \} $ has zero Lebesgue measure, the Lemma \ref{L AC reparameterization} guarantees that the function $t \mapsto y_{(v,w)} \circ \ell_{(V,W)}(t)$ is absolutely continuous and
\begin{align*}
&
\frac d {dt}\left( y_{(v,w)} \circ \ell_{(V,W)}^{\#}  \right) =
\left( \dot y_{(v,w)} \circ \ell_{(V,W)}^{\#}  \right) \frac{1}{\sqrt{v^2+|w|^2}} \circ \ell_{(V,W)}^{\#} =
\\ = &
f\left( y_{(v,w)} \circ \ell_{(V,W)}^{\#} \right) \frac{v}{\sqrt{v^2+|w|^2}} \circ \ell_{(V,W)}^{\#} +
G\left( y_{(v,w)} \circ \ell_{(V,W)}^{\#} \right) \frac{w}{\sqrt{v^2+|w|^2}} \circ \ell_{(V,W)}^{\#} =
\\ = &
f\left( y_{(v,w)} \circ \ell_{(V,W)}^{\#} \right) \hat v +
G\left( y_{(v,w)} \circ \ell_{(V,W)}^{\#} \right) \hat w ,
\end{align*}
 then $y_{(\hat v, \hat w)}= y_{(v,w)}\circ \ell_{(V,W)}^{\#} $, and therefore, $\left[(V,y_{(v,w)}) \right] = \left[ (\hat V, y_{\hat v, \hat w}) \right ]$.
\end{proof}

To formulate the result on continuity of the input-to-trajectory map we introduce the set
\[
\mathcal W_T = \left\{
[(V,W)] \in \mathcal F_{k,T}^0: [(V, y_{(v,w)})] \in \mathcal F_{n,T}
\right\} ,
\]
of  generalized controls,  such that the generalized trajectory assigned to them by \eqref{Eq input-to-trajectory map} is well defined on the time interval $[0,T]$.

The following is a stronger version of Theorem 2, Corollary 1 in \cite{BressanRampazzo88}.

\begin{theorem}
\label{T extension of input-to trajectory map}
The set $\mathcal W_T$ is an open subset of $\left( \mathcal F_{k,T}^0, d_T^+ \right)$.

The mapping $[(V,W)] \in \mathcal W_T \mapsto \left[ (V,y_{(v,w)}) \right] \in \mathcal F_{n,T}$ is the unique extension of the input-to-trajectory map that is continuous with respect to the metrics $d_T^+$ in the domain and in the image. $\square$
\end{theorem}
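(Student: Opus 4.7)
The plan is to reduce everything to the auxiliary ODE \eqref{Eq reduced system y} in canonical parameterization, where $|(\dot V, \dot W)| \equiv 1$ a.e. makes the right-hand side $f(y)v + G(y)w$ a uniformly bounded Carath\'eodory field, and then lift continuous dependence for this ODE back to the $d_T^+$-topology via the continuous canonical selector (Theorem \ref{T continuity Frechet to W}). Proposition \ref{P parameterization invariance of trajectories} justifies working exclusively with canonical representatives $(V, W)$. Because the vector fields $f, g_1, \dots, g_k$ are locally Lipschitz, $y_{(v,w)}$ has a well-defined maximal solution on some interval $[0, \tau^*[$ with $\tau^* \in \,]0, +\infty]$, and the condition $[(V,W)] \in \mathcal W_T$ is equivalent to $V^\#(T) < \tau^*$, i.e., the trajectory reaches canonical time $V^\#(T)$ before any blow-up.

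For the continuity claim, suppose $[(V_i, W_i)] \to [(V,W)]$ in $d_T^+$. Replacing each by its canonical representative and identifying it with its truncation $(V_{i,T}, W_{i,T})$, which is a finite Fr\'echet curve of length $V_i^\#(T)$, I would apply Theorem \ref{T continuity Frechet to W} to the truncations to get $(V_{i,T}, W_{i,T}) \to (V_T, W_T)$ in $W_{1,p}^{1+k}[0, +\infty[$ for every $p \in [1, +\infty[$. In particular $V_i^\#(T) \to V^\#(T)$ (since in canonical parameterization the arc-length function is the identity, so the length component of $d_T^+$ controls this quantity directly) and $(v_i, w_i) \to (v,w)$ in $L_1$ on every compact interval, while staying uniformly bounded by $1$ in $L_\infty$. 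Fix $\tau \in \,]V^\#(T), \tau^*[$. A standard Gronwall estimate, using local Lipschitz continuity of $f$ and $G$ together with the $L_\infty$-bound and $L_1$-convergence of $(v_i, w_i)$, yields $y_{(v_i, w_i)} \to y_{(v,w)}$ uniformly on $[0, \tau]$; since $V_i^\#(T) \to V^\#(T) < \tau$, this simultaneously shows $[(V_i, W_i)] \in \mathcal W_T$ for all large $i$ (so $\mathcal W_T$ is open) and gives uniform convergence $(V_{i,T}, y_{(v_i, w_i), T}) \to (V_T, y_{(v,w), T})$, i.e., $d_T$-convergence of the images. To upgrade to $d_T^+$-convergence, I would prove convergence of the truncated image lengths $\int_0^{V_i^\#(T)} \sqrt{v_i^2 + |f(y_{(v_i,w_i)})v_i + G(y_{(v_i,w_i)})w_i|^2}\, dt$, which follows by dominated convergence from uniform convergence of $y_{(v_i,w_i)}$, $L_1$-convergence of $(v_i, w_i)$, convergence $V_i^\#(T) \to V^\#(T)$, and local boundedness of $f$ and $G$.

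Uniqueness of the continuous extension is immediate from Proposition \ref{P density of ordinary controls}: any two continuous maps $\mathcal W_T \to (\mathcal F_{n,T}, d_T^+)$ that agree on the dense set of ordinary controls must coincide. The principal obstacle is bridging the gap between the parametrization-invariant topology $d_T^+$ and the strong-norm convergence of \emph{derivatives} needed to control solutions of \eqref{Eq reduced system y}; this is precisely where Theorem \ref{T continuity Frechet to W} and the extra length-correction term in $d_T^+$ (compared with the plain Fr\'echet metric $d_T$, for which the analogous statement would fail, cf.\ Example \ref{Ex strong Frechet metric}) do the essential work. A secondary point is making sure the openness argument handles the possibility that $V_i^\#(T)$ could a priori exceed the blow-up time $\tau^*_i$ of the $i$-th auxiliary system; this is ruled out by the simultaneous facts that $V_i^\#(T)$ stays in a compact subset of $[0, \tau^*[$ eventually and that continuous dependence gives existence on any compact subinterval of $[0, \tau^*[$.
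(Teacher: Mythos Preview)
Your proposal is correct and follows essentially the same route as the paper: factor the map through the canonical selector (Theorem~\ref{T continuity Frechet to W}), apply standard $L_1$-to-$W_{1,1}$ continuous dependence for the auxiliary ODE \eqref{Eq reduced system y}, and project back to Fr\'echet curves. The paper packages the length-convergence step for $d_T^+$ inside the $W_{1,1}$ convergence of $(V,y_{(v,w)})$ via the inequality $d_T^+ \leq \rho_T^+$, whereas you isolate it as a separate dominated-convergence argument; and you make the uniqueness-via-density step explicit (Proposition~\ref{P density of ordinary controls}), which the paper leaves implicit.
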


\begin{proof}
The transformation $[( V, W)] \mapsto \left[ (  V,y_{( v, w)}) \right] $ can be decomposed into a chain of mappings
\begin{align}
\notag
[(V, W)] \in \mathcal F_{k,T}^0 \mapsto &
(V, W) \in W_{1,1}^{1+k}[0,+\infty[ \mapsto
\\ \mapsto & \label{Eq transformation chain}
(V,y_{(v,w)}) \in \mathcal Y_{n,T} \mapsto
[(V,y_{(v,w)})] \in \mathcal F_{n,T} ,
\end{align}
where the first transformation is the canonical selector, which is continuous  by Theorem \ref{T continuity Frechet to W}.

We provide the space $\mathcal Y_{n,T}$ with the metric $\rho_T^+$ defined in \eqref{Eq RhoPlus}.
By definition, $d_T^+\left( [(\theta_1,y_1)],[(\theta_2,y_2)] \right) \leq \rho_T^+ \left( (\theta_1,y_1) , (\theta_2,y_2)  \right) $ for every $(\theta_1,y_1) $, $(\theta_2,y_2) \in \mathcal Y_{n,T}$. Hence, the last transformation in \eqref{Eq transformation chain} is continuous.

Fix a generalized control $[(\hat V,\hat W)] \in \mathcal W_T$.
Under Conventions \ref{Cv extension canonical} and \ref{Cv extension Y}, the support of $(\hat v,\hat w)$ is contained in the compact interval $[0,\hat V^\#(T)]$, and for every $[( V, W)] \in \mathcal F_{k,T}^0 $ such that $ d_T^+ \left( [( V, W)], [(\hat V, \hat W)] \right) < \varepsilon $, the support of $( v,  w)$ is contained in $[0, \hat V^\#(T)+\varepsilon]$

By standard continuity result, there is some $\varepsilon>0$ such that the trajectory of the system \eqref{Eq reduced system y} is well defined on the interval $[0,\hat V^\#(T)+\varepsilon]$ for every $( V,  W) \in W_{1,1}^{1+k}[0,+\infty[$ such that $\left\| ( v, w) - (\hat v, \hat w) \right\|_{L_1^{1+k}[0,+\infty[} < \varepsilon$.

Since the input-to-trajectory map of system \eqref{Eq reduced system y} $(v,w) \mapsto y_{(v,w)}$ is continuous with respect to the norms $\| \cdot \|_{L_1^{1+k}[0, \hat V^\#(T) + \varepsilon]}$ in the domain and $\| \cdot \|_{W_{1,1}^{1+n}[0, \hat V^\#(T) + \varepsilon]}$ in the image, the Theorem follows.
\end{proof}

We compare Theorem \ref{T extension of input-to trajectory map} with the corresponding result (Theorem 2, Corollary 1) of \cite{BressanRampazzo88}.
There, the generalized controls have equibounded variations and the metric in the space of impulses is (in our terminology) the Fr\'echet metric.
The topology in the space of generalized trajectories is defined by the Hausdorff metric on the graphs of generalized trajectories (i.e., in the images of the curves $(\theta (t), y(t) )$, $t \in [0, \theta^\#(T)]$).

By introducing the strengthened Fr\'echet metric $d^+$, we automatically require convergence of the full variations, and therefore guarantee equiboundedness of converging sequences of inputs.
The main difference lies in the fact that we prove continuity of the input-to-trajectory map when the space of generalized trajectories is provided with the $d^+$ metric instead of the weaker Hausdorff metric.
Continuity of the canonical selector (Theorem \ref{T continuity Frechet to W}) is essential for this result.

To see that the topology generated by $d^+$ is strictly stronger than the topology generated by the Hausdorff metric, consider the following simple example:

\begin{example}
\label{Ex Hausdorff}
Consider two curves $[(\theta, y)],[(\tilde \theta, \tilde y)] \in \mathcal F_{2,T} $, with canonical elements $(\theta, y), (\tilde \theta, \tilde y)$. Suppose that $(\theta(t), y(t))= (\tilde \theta(t), \tilde y(t))$ for $t\geq 2\pi $ and
\[
(\theta(t), y(t))= ( 0, \cos t, \sin t ) , \ \
(\tilde \theta(t), \tilde y(t))= ( 0, \cos (2\pi -t), \sin (2\pi- t) ) \quad \text{for } t \in [0,2\pi] .
\]
It is simple to check that the Hausdorff distance between $[(\theta, y)]$ and $[(\tilde \theta, \tilde y)]$ is zero but $d\left( [(\theta, y)],[(\tilde \theta, \tilde y)] \right) = d^+\left( [(\theta, y)],[(\tilde \theta, \tilde y)] \right) = 2$. $\square$
\end{example}

To illustrate the extended input-to-trajectory mapping, we return to Example~\ref{Ex noninvolutive system}:

\begin{example}
\label{Ex noninvolutive system c2}
Consider  Example \ref{Ex noninvolutive system}. By  Theorem \ref{T extension of input-to trajectory map}, the input-to-trajectory map is well defined. To compute the generalized trajectories corresponding to the generalized controls in the example, notice that here the system \eqref{Eq reduced system y} reduces to
\[
\dot y_1=w_1, \quad \dot y_2 = w_2, \quad \dot y_3 = y_2 w_2 .
\]
Let $(V_i,W_i), \ i=1,2,3$ be the parameterizations by length of the generalized controls $[(V_i,W_i)], \ i=1,2,3$ in Example \ref{Ex noninvolutive system c1}. Then,
\begin{align*}
&
(v_1,w_1)(t)= (0,1,0) \chi_{[0,1]}(t) + (0,0,1) \chi_{]1,2]}(t) + (1,0,0) \chi_{]2,+\infty[}(t),
\\ &
(v_2,w_2)(t)= \left(0,\frac{1}{\sqrt{2}},\frac{1}{\sqrt{2}}\right) \chi_{[0,\sqrt{2}]}(t) + (1,0,0) \chi_{]\sqrt{2},+\infty[}(t),
\\ &
(v_3,w_3)(t)= (0,0,1) \chi_{[0,1]}(t) + (0,1,0) \chi_{]1,2]}(t) + (1,0,0) \chi_{]2,+\infty[}(t) .
\end{align*}
Therefore,
\begin{align*}
&
(V_1,y_{(v_1,w_1)})(t) = (0,t,0,0) \chi_{[0,1]}(t) + (0,1,t-1,0) \chi_{]1,2]}(t) + (t-2,1,1,0) \chi_{]2,+\infty[}(t),
\\ &
(V_2,y_{(v_2,w_2)})(t) = \left(0,\frac{t}{\sqrt{2}},\frac{t}{\sqrt{2}}, \frac{t^2}{4}\right) \chi_{[0,\sqrt{2}]}(t) + \left(t-\sqrt{2},1,1,\frac 1 2 \right) \chi_{]\sqrt{2},+\infty[}(t) ,
\\ &
(V_3,y_{(v_3,w_3)})(t) = (0,0,t,0) \chi_{[0,1]}(t) + (0,t-1,1,t-1) \chi_{]1,2]}(t) + (t-2,1,1,1) \chi_{]2,+\infty[}(t)
\end{align*}
are parameterizations of the generalized trajectories corresponding to $[(V_i,W_i)], \ i=1,2,3$, respectively
(see Figure \ref{Fg generalized trajectories}).
\begin{figure}[h] \vspace{-0.3cm}
\begin{center}
\begin{minipage}[t]{12cm}
\includegraphics*[height=10cm]{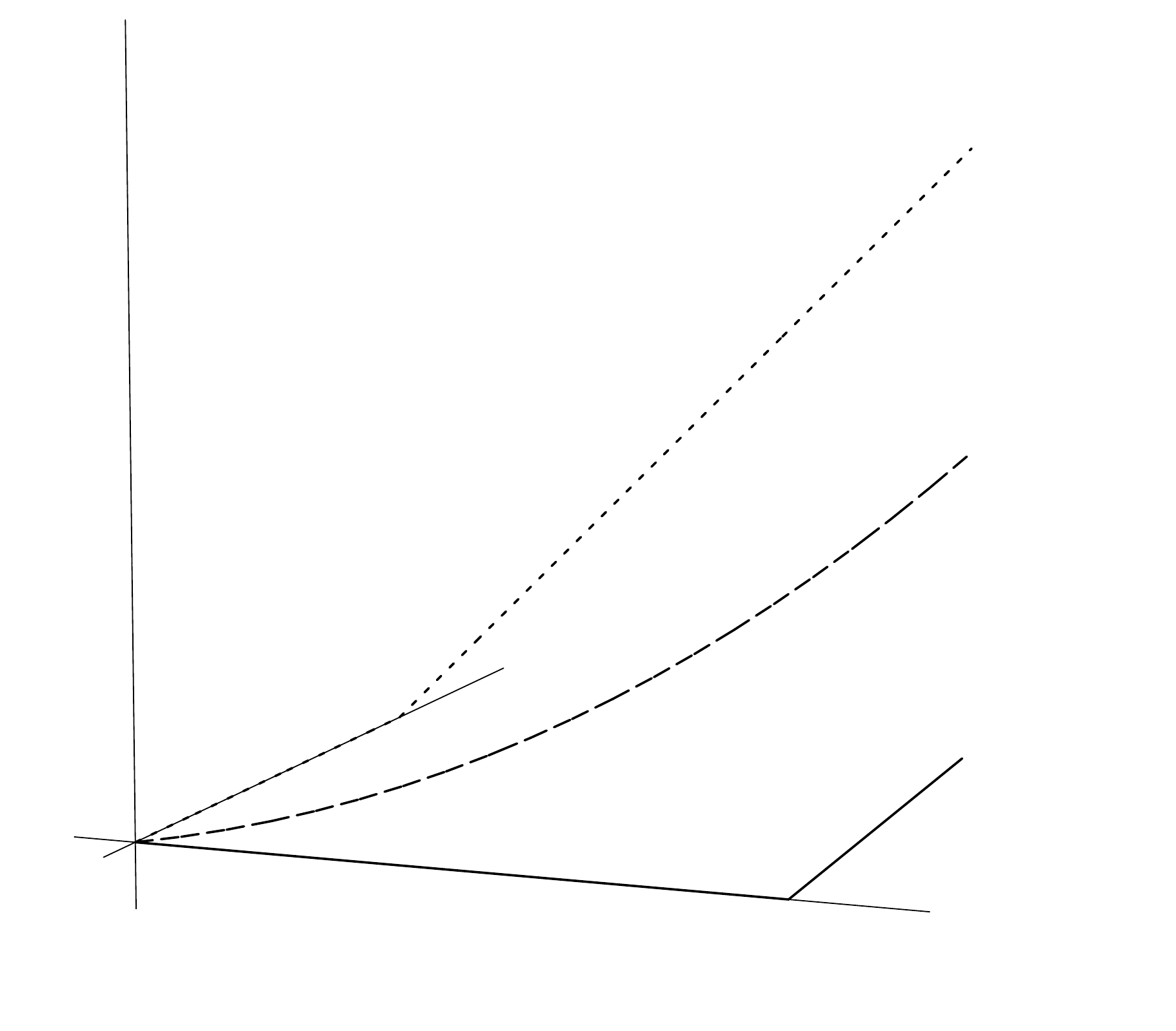}

\vspace{-1.2cm} \hspace{8.5cm} $x_1$

\vspace{-2.9cm} \hspace{4.8cm} $x_2$

\vspace{-6.5cm} \hspace{1.2cm} $x_3$

\end{minipage}
\end{center}
\vspace{-0.9cm}

\caption{
\label{Fg generalized trajectories}
The jumps of the generalized trajectories from Example \ref{Ex noninvolutive system c2}: $\left[ (V_1,y_{(v_1,w_1)}) \right]$ (solid line), $\left[ (V_2,y_{(v_2,w_2)}) \right]$ (dashed line), and $\left[ (V_3,y_{(v_3,w_3)}) \right]$ (doted line).
}
\end{figure}
Notice that
\begin{align*}
&
y_{(v_1,w_1)} \circ V_1^\#(t) =(1,1,0), \qquad
y_{(v_2,w_2)} \circ V_2^\#(t) =\left(1,1,\frac 1 2 \right),
\\ &
y_{(v_3,w_3)} \circ V_3^\#(t) =(1,1,1),
\end{align*}
for every $t >0$. $\square$
\end{example}

To compare our approach with the one developed by Miller and Rubinovich \cite{MiRu}, we formulate the following result, which  shows that every generalized trajectory of system \eqref{affine system}, as defined in \cite{MiRu} coincides with some $y_{(v,w)}\circ V^\#$, with $(V,W) \in \mathcal Y^0_{k,T}$.

\begin{proposition}
\label{P Miller trajectories}
Consider a sequence of ordinary controls $\left\{ u_i \in L_1^k[0,T] \right\}_{i \in \mathbb N} $, equibounded in $L_1[0,T]$-norm, such that the corresponding sequence of trajectories $\left\{ x_{u_i} \right\}_{i \in \mathbb N} $ is equibounded in $L_\infty[0,T]$-norm.

There is a subsequence $\{ u_{i_j} \}$ such that $\left[ ( t,U_{i_j}) \right] $ converges  towards some $[(V,W)] \in \mathcal F_{k,T}^0$, and $x_{u_{i_j}} (t) $ converges pointwise to $y_{(v,w)} \circ V^\#(t)$ in $[0,T]$, with the possible exception of a countable set of points. $\square$
\end{proposition}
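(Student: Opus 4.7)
The plan is to combine the compactness result Proposition~\ref{P compactness}, the continuity of the extended input-to-trajectory map Theorem~\ref{T extension of input-to trajectory map}, and the BV structure of the limit to deduce pointwise convergence except on a countable set.

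\emph{Extraction of the limit generalized control.} Since $\{u_i\}$ is bounded in $L_1^k[0,T]$, Proposition~\ref{P compactness} supplies a subsequence $\{u_{i_j}\}$ such that $[(t, U_{i_j})] \to [(V,W)]$ in $\left( \mathcal{F}_{k,T}^0, d_T^+ \right)$ for some $[(V,W)] \in \mathcal{F}_{k,T}^0$; by Theorem~\ref{T continuity Frechet to W}, the canonical derivatives $(v_{i_j}, w_{i_j})$ converge to $(v,w)$ in $L_1$ on a common compact interval. I then verify that $[(V,W)] \in \mathcal{W}_T$. By Proposition~\ref{P parameterization invariance of trajectories}, the ordinary trajectory $x_{u_{i_j}}$ corresponds to the Fr\'echet curve $[(V_{i_j}, y_{(v_{i_j}, w_{i_j})})] = [(t, x_{u_{i_j}})]$, so the hypothesis $\sup_j \|x_{u_{i_j}}\|_{L_\infty} \leq K < \infty$ transfers to uniform boundedness of $\{y_{(v_{i_j}, w_{i_j})}\}$. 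A finite blow-up of $y_{(v,w)}$ at some $s^\ast \leq V^\#(T)$ would, by local continuous dependence for \eqref{Eq reduced system y} on $[0, s^\ast - \varepsilon]$, force $|y_{(v_{i_j}, w_{i_j})}(s^\ast - \varepsilon)|$ to exceed $K$ for suitable $\varepsilon > 0$ and large $j$, a contradiction. Hence $y_{(v,w)}$ exists on the whole interval $[0, V^\#(T)]$ and $[(V,W)] \in \mathcal{W}_T$.

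\emph{Convergence of the generalized trajectories.} Theorem~\ref{T extension of input-to trajectory map} now yields
\[
[(t, x_{u_{i_j}})] = [(V_{i_j}, y_{(v_{i_j}, w_{i_j})})] \longrightarrow [(V, y_{(v,w)})] \qquad \text{in } \left( \mathcal{F}_{n,T}, d_T^+ \right),
\]
and Proposition~\ref{P lift of functions of bounded variation} guarantees that the function $\xi := y_{(v,w)} \circ V^\#$ has bounded variation on $[0,T]$, hence at most countably many discontinuities.

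\emph{Pointwise convergence at continuity points.} Fix a continuity point $t_0 \in [0,T]$ of $\xi$. The $d_T^+$-convergence of the graphs produces, for each $j$, parametrizations of $[(t, x_{u_{i_j}})]$ and $[(V, y_{(v,w)})]$ at uniform $L_\infty$-distance $\varepsilon_j \to 0$; selecting a parameter at which the second parametrization hits $(t_0, \xi(t_0))$ yields times $s_j \to t_0$ with $x_{u_{i_j}}(s_j) \to \xi(t_0)$. To replace $s_j$ by $t_0$, I argue by contradiction: if $\limsup_j |x_{u_{i_j}}(t_0) - x_{u_{i_j}}(s_j)| > 0$ along a subsequence, then the graphs of $x_{u_{i_j}}$ inside arbitrarily small time-windows around $t_0$ would exhibit an $x$-oscillation bounded away from zero, forcing any $d_T^+$-limit of these graphs to contain a non-trivial vertical segment over $t = t_0$, contradicting the continuity of $\xi$ at $t_0$. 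I expect this last paragraph to be the main technical obstacle: the metric $d_T^+$ controls only the ambient geometry of the graphs and not any equicontinuity of the underlying ordinary trajectories, so the continuity of $\xi$ at $t_0$ must be leveraged to rule out a concentration of the mass of $u_{i_j}$ near $t_0$.
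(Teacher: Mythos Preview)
Your extraction of the convergent subsequence via Proposition~\ref{P compactness} and your blow-up argument to place $[(V,W)]$ in $\mathcal W_T$ are sound, and the paper proceeds the same way up to this point (though it replaces the blow-up argument by the cleaner observation that, since the $x_{u_i}$ stay in a fixed compact set, one may cut off $f,g_1,\dots,g_k$ outside that set and work with globally Lipschitz fields).

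The genuine gap is in your final paragraph. You choose the exceptional set to be the discontinuities of $\xi = y_{(v,w)}\circ V^\#$ and then argue that at a continuity point $t_0$ of $\xi$ the limit Fr\'echet curve $[(V,y_{(v,w)})]$ cannot carry a non-trivial vertical segment over $t_0$. This implication is false: $V$ may well be constant on an interval $[s_1,s_2]$ with $V(s_1)=V(s_2)=t_0$ while $y_{(v,w)}$ makes a loop, i.e.\ $y_{(v,w)}(s_1)=y_{(v,w)}(s_2)$ but $y_{(v,w)}$ is not constant on $[s_1,s_2]$. Then $\xi$ is continuous at $t_0$, yet the limit curve has a non-degenerate vertical segment there, and the approximating graphs $(t,x_{u_{i_j}}(t))$, which must track that loop in $d_T^+$, can place the point $(t_0,x_{u_{i_j}}(t_0))$ anywhere along it. So convergence of $x_{u_{i_j}}(t_0)$ to $\xi(t_0)$ need not hold, and your contradiction does not close. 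The correct countable exceptional set is the set of discontinuities of $V^\#$, not of $\xi$.

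The paper's argument avoids this entirely by working with canonical parameterizations and exploiting the compact-support reduction: once the fields are bounded, the curves $(V_i,y_{(v_i,w_i)})$ are uniformly Lipschitz and converge uniformly to $(V,y_{(v,w)})$. One then writes
\[
\left| y_{(v_i,w_i)}\!\circ V_i^\#(t) - y_{(v,w)}\!\circ V^\#(t)\right|
\le C\,\bigl|V_i^\#(t)-V^\#(t)\bigr| + \bigl\| y_{(v_i,w_i)}-y_{(v,w)}\bigr\|_{L_\infty},
\]
and uses the elementary fact that $V_i^\#(t)\to V^\#(t)$ at every continuity point of $V^\#$ (the $\liminf$ and $\limsup$ are trapped between $V^\#(t^-)$ and $V^\#(t^+)$). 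That set of discontinuities is countable, and the proof is complete without any graph-geometry argument.
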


\begin{proof}
The Proposition \ref{P compactness} guarantees existence of a convergent subsequence $\left\{ \left[ ( t,U_i) \right] \right\}_{i \in \mathbb N} $.
Thus, we can assume without loss of generality that $\left\{ \left[ ( t,U_i) \right] \right\}_{i \in \mathbb N} $ converges to some $[(V,W)] \in \mathcal F_{k,T}^0$. Let $(V,W)$ be the canonical parameterization of $[(V,W)]$, and $(V_i,W_i)$ be the canonical parameterization of $\left[ ( t,U_i) \right]$, for $i \in \mathbb N$.
Notice that $x_{u_i} = y_{(v_i,w_i)} \circ V_i^\# $ for every $i \in \mathbb N$.

Since the sequence $\left\{ x_{u_i} \right\}_{i \in \mathbb N} $ is equibounded, we can assume that the vector fields $f$, $g_1, \ldots , g_k$ have compact support.
In that case, the sequence $\left\{ (V_i,y_{(v_i,w_i)} ) \right\}$ is uniformly Lipschitz and converges uniformly towards $(V,y_{(v,w)})$ in the interval $\left[ 0, 1 + V^\#(T)\right]$.
Then, for any $t \in [0,T]$:
\begin{align*}
\left| y_i \circ V_i^\#(t) - y \circ V^\#(t) \right| \leq &
\left| y_i \circ V_i^\#(t) - y_i \circ V^\#(t) \right| +
\left| (y_i-y) \circ V_i^\#(t) \right| \leq
\\ \leq &
C \left| V_i^\#(t) -  V^\#(t) \right| +
\left\| y_i-y \right\|_{L_\infty\left[0,1 + V^\#(T)\right]} .
\end{align*}
Since
\[
\liminf_{i \rightarrow \infty} V_i^\# (t) \geq V^\#(t^-) , \quad
\limsup_{i \rightarrow \infty} V_i^\# (t) \leq V^\#(t^+),  \qquad \forall t \in [0,T],
\]
we see that $\lim\limits_{i \rightarrow \infty} x_{u_i}(t) = y \circ V^\#(t)$, with exceptions only at the points of discontinuity of $V^\#$. This  set is at most countable, and the result follows.
\end{proof}

\section{Auxiliary problem}
\label{S Problem reduction}

Coming back to the optimal control problem  \eqref{lag}--\eqref{boundary conditions} we show that parameterization by the arc length of the curves $[(t,U)]$ results in   its equivalent reformulation. Canonical parameterizations where introduced in \cite{BressanRampazzo88}, for Cauchy problems. Here we extend the analysis to Lagrange problems \eqref{lag}--\eqref{boundary conditions}. A similar reparameterization was introduced in \cite{MiRu}. \medskip

For each ordinary control $u \in L_1^k[0,1]$, the length function $\ell_{(t,U)}$ coincides with the function $\tau_u:[0,+\infty[ \mapsto [0,+\infty[$, given by
\begin{equation}
\label{Eq arc length function U}
\tau_u(t) = \int_0^t \sqrt{1+ |u|^2} \, ds,
\end{equation}
and the canonical parameterization of the curve $[(t,U)]$ is the function $(V,W)= \left( \tau_u^{-1}, U \circ \tau_u^{-1} \right) $.
The function $\tau_u$ admits an absolutely continuous inverse $\tau_u^{-1}$ with $\frac{d}{dt} \tau_u^{-1}= \frac{1}{\sqrt{1+|u|^2}}\circ \tau_u^{-1}$.

The reparameterized trajectory $x_u \circ \tau_u^{-1}$ coincides with $y_{(v,w)}$, the unique solution of \eqref{Eq reduced system y} with
\begin{equation}
\label{Eq transformation u to vw}
(v,w)= \left( \frac{1}{\sqrt{1+|u|^2}}\circ \tau_u^{-1}, \frac{u}{\sqrt{1+|u|^2}}\circ \tau_u^{-1} \right) .
\end{equation}
By the change of variable $t= \tau_u^{-1}$, the cost functional \eqref{lag} becomes
\begin{align*}
\int_0^1 \! L(x_u,u) dt =
\int_0^{\tau_u(1)} \!\!\! L(y_{(v,w)}, u \circ \tau_u^{-1}) \frac{1}{\sqrt{1+|u|^2}}\circ \tau_u^{-1} dt =
\int_0^{\tau_u(1)} \!\!\! L\left(y_{(v,w)}, \frac w v \right) v \, dt .
\end{align*}
Conversely, for any $T \in ]0,+\infty[$ and any measurable function $t \mapsto (v(t),w(t)) \in \mathbb R^{1+k}$ satisfying \begin{align*}
& v(t) >0, \ \ v(t)^2+|w(t)|^2 = 1 \quad \text{a.e. } t \in [0,T] ,
\qquad
\int_0^Tv(t) dt =1,
\end{align*}
there is a unique $u \in L_1^k[0,1]$ satisfying \eqref{Eq transformation u to vw}.

Therefore, the problem \eqref{lag}--\eqref{boundary conditions} is equivalent to
\begin{align}
\label{Eq reduced lagrangian}
& I(v,w,T)=\int_0^TL\left( y(t), \frac{w(t)}{v(t)}\right) v(t) dt \rightarrow \min , \\
\label{Eq reduced system}
& \dot \theta(t) = v(t) , \quad \dot y(t) = f(y(t))v(t) + G(y(t))w(t), \\
\label{Eq control constraints}
& v(t) >0, \quad v(t)^2+|w(t)|^2 =1 \qquad \text{a.e. } t \in [0,T], \\
\label{Eq boundary conditions}
& \theta(0)=0, \ \theta(T)=1, \ y(0)=x^0, \ y(T)=x^1,
\end{align}
with free $T\in ]0,+\infty[$.

It is crucial that the integrand in \eqref{Eq reduced lagrangian} is parametric in the terminology of L.C.Young \cite{Young}, i.e. invariant with respect to the dilation $(v,w) \to (\kappa v, \kappa w), \ \kappa \in \mathbb{R}_+$. This will later allow us to extend the functional \eqref{Eq reduced lagrangian} onto the class of Fr\'echet generalized controls.

The proof of the following Lemma is given in Appendix (Subsection \ref{SP L convexity of reduced lagrangean}).

\begin{lemma}
\label{L convexity of reduced lagrangean}

1. For every $y \in \mathbb R^n$, the function $(v,w)\mapsto L\left( y, \frac w v \right) v $ is convex in $]0,+\infty[\times \mathbb R^k$.

2. For every $(\hat y, \hat v, \hat w) \in \mathbb R^n \times [0,+\infty[\times \mathbb R^k $,
\begin{align*}
\liminf_{\scriptsize\begin{array}{c}
(y,v,w) \rightarrow (\hat y, \hat v, \hat w) \\
v>0
\end{array}} L\left( y, \frac w v \right) v = &
\liminf_{\scriptsize \begin{array}{c}
(v,w) \rightarrow (\hat v, \hat w) \\
v>0
\end{array}} L\left( \hat y, \frac w v \right) v =
\\ = &
\lim_{v \rightarrow \hat v, \ v >0 } L\left( \hat y, \frac {\hat w} v  \right) v . \ \square
\end{align*}
\end{lemma}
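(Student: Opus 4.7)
For part 1, I would invoke the standard perspective construction. Given $(v_i, w_i) \in\; ]0,+\infty[\;\times\; \mathbb R^k$, $i=1,2$, and $\lambda \in [0,1]$, set $v = \lambda v_1 + (1-\lambda) v_2 > 0$ and $w = \lambda w_1 + (1-\lambda) w_2$. The identity
\[
\frac{w}{v} \;=\; \frac{\lambda v_1}{v}\cdot\frac{w_1}{v_1} + \frac{(1-\lambda) v_2}{v}\cdot\frac{w_2}{v_2}
\]
exhibits $w/v$ as a convex combination (nonnegative weights summing to $1$), so convexity of $L(y,\cdot)$ (Assumption A) yields $L(y, w/v) \leq \tfrac{\lambda v_1}{v} L(y, w_1/v_1) + \tfrac{(1-\lambda) v_2}{v} L(y, w_2/v_2)$, and multiplying by $v$ gives the joint convexity claim.

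For part 2, I would split according to $\hat v$. When $\hat v > 0$, joint continuity of $L$ together with $w/v \to \hat w/\hat v$ reduces all three quantities to $L(\hat y, \hat w/\hat v)\hat v$, so the identity is immediate.

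The substantive case is $\hat v = 0$. First I would observe that $v \mapsto L(\hat y, \hat w/v)v$ is convex on $]0,+\infty[$ (part 1 applied along the line $w\equiv \hat w$), hence admits a limit $\alpha \in \mathbb R \cup\{+\infty\}$ as $v\to 0^+$, which makes the rightmost expression in the statement well defined. The trivial chain
\[
\liminf_{(y,v,w)\to (\hat y, 0, \hat w),\; v>0} L(y, w/v)v \;\leq\; \liminf_{(v,w)\to (0,\hat w),\; v>0} L(\hat y, w/v)v \;\leq\; \alpha
\]
holds because each step restricts the admissible sequences further. So the task reduces to showing $\alpha \leq \liminf_{(y,v,w)\to (\hat y,0,\hat w),\,v>0} L(y,w/v)v$. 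Given any sequence $(y_n, v_n, w_n)\to(\hat y, 0, \hat w)$ with $v_n>0$ and $L(y_n, w_n/v_n)v_n\to\beta$, fix $v_0>0$ and $t\in[0,1[$, and apply the perspective inequality of part 1 at $y = y_n$ between the points $(v_0,\hat w)$ and $(v_n,w_n)$:
\[
L\!\left(y_n,\, \tfrac{(1-t)\hat w + t w_n}{(1-t)v_0 + t v_n}\right)\big((1-t)v_0+tv_n\big) \;\leq\; (1-t)L(y_n,\hat w/v_0)v_0 + tL(y_n, w_n/v_n)v_n.
\]
Letting $n\to\infty$, joint continuity of $L$ applies on the left because $(1-t)v_0+tv_n\to(1-t)v_0>0$, yielding
\[
L\!\left(\hat y,\, \tfrac{\hat w}{(1-t)v_0}\right)(1-t)v_0 \;\leq\; (1-t)L(\hat y, \hat w/v_0)v_0 + t\beta;
\]
sending $t\to 1^-$, the left-hand side tends to $\alpha$ by definition of $\alpha$, forcing $\alpha\leq\beta$.

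The main obstacle is precisely the $\hat v=0$ case: continuity of $L$ alone is useless there because $w/v$ blows up, so one cannot pass to the limit directly inside $L(y,w/v)v$. The convex interpolation between a safely interior point $(v_0,\hat w)$ and the degenerating $(v_n,w_n)$ is what permits one to apply continuity of $L$ at finite arguments while still controlling the limit of the perspective, and this idea is the crux of the argument.
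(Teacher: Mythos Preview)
Your proof is correct and follows essentially the same strategy as the paper: both establish the nontrivial inequality at $\hat v=0$ by interpolating between a ``safe'' point with $v$ bounded away from zero and the degenerating point, invoking convexity for the interpolation and continuity of $L$ at the safe point. The paper packages this slightly differently---it uses the secant inequality for $u\mapsto L(y,u)$ along the ray through $w$ and an $\varepsilon$--$\delta$/$\limsup$ formulation rather than sequences and the perspective inequality from part~1---but the core mechanism is identical.
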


We define the new Lagrangian on $\mathbb{R}^n \times \mathbb{R}_+ \times \mathbb{R}^k$:
\begin{align}\label{lambda_extended}
& \lambda (y,v,w) =
\left\{ \begin{array}{ll}
L\left( y, \frac w v \right) v, & \text{for } v>0, \smallskip \\
\lim\limits_{\eta \rightarrow 0^+} L\left( y, \frac w \eta \right) \eta , & \text{for } v=0
\end{array} \right.
\end{align}

Lemma \ref{L convexity of reduced lagrangean} implies

\begin{corollary}
The function $\lambda (y,v,w)$, defined by \eqref{lambda_extended}
is the lower semicontinuous envelope of the function $(y,v,w) \mapsto L\left( y, \frac w v \right) v. \ \square$
\end{corollary}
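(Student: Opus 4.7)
The plan is to read off the corollary directly from part 2 of Lemma \ref{L convexity of reduced lagrangean}, since that lemma was formulated precisely to identify the relevant liminf. Let me denote $F(y,v,w) = L(y, w/v) v$, defined on $\mathbb{R}^n \times \,]0,+\infty[\, \times \mathbb{R}^k$ (and extended by $+\infty$, or equivalently by restriction, to $\mathbb{R}^n \times \mathbb{R}_+ \times \mathbb{R}^k$). Recall that the lower semicontinuous envelope of $F$ is given by the pointwise formula
\[
\bar F (\hat y, \hat v, \hat w) = \liminf_{\scriptsize\begin{array}{c} (y,v,w) \to (\hat y, \hat v, \hat w)\\ v > 0 \end{array}} L\left( y, \frac{w}{v}\right) v ,
\]
which is always a lower semicontinuous function, is always $\leq F$ on $\{v>0\}$, and is the largest such minorant. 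So it suffices to identify $\bar F$ pointwise with $\lambda$.

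At a point with $\hat v > 0$, the map $F$ is continuous in a neighborhood (by continuity of $L$ and of the division $w/v$ away from $v=0$), hence $\bar F(\hat y, \hat v, \hat w) = F(\hat y, \hat v, \hat w) = L(\hat y, \hat w/\hat v)\,\hat v = \lambda(\hat y, \hat v, \hat w)$. The only real case to treat is $\hat v = 0$, and there the second assertion of Lemma \ref{L convexity of reduced lagrangean} does the job: it gives
\[
\bar F(\hat y, 0, \hat w) = \liminf_{\scriptsize\begin{array}{c}(y,v,w)\to(\hat y, 0, \hat w)\\ v > 0\end{array}} L\left(y, \tfrac{w}{v}\right) v = \lim_{v \to 0^+} L\left(\hat y, \tfrac{\hat w}{v}\right) v ,
\]
and by the definition \eqref{lambda_extended} the right-hand side is exactly $\lambda(\hat y, 0, \hat w)$.

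Combining the two cases, $\bar F = \lambda$ on all of $\mathbb{R}^n \times \mathbb{R}_+ \times \mathbb{R}^k$, which is the claim. There is essentially no obstacle here; all the work was already done in Lemma \ref{L convexity of reduced lagrangean}, whose part 2 was crafted precisely so that the liminf defining $\bar F$ (a priori taken over the full three-variable approach) collapses to the one-dimensional limit appearing in the definition of $\lambda$ at $v=0$. The one small thing worth noting in the write-up is that the monotone/convex structure from part 1 of the lemma is what guarantees that the limit $\lim_{v \to 0^+} L(\hat y, \hat w/v) v$ in \eqref{lambda_extended} actually exists (in $\mathbb{R} \cup \{+\infty\}$), so that $\lambda$ is well-defined as a function.
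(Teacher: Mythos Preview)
Your proposal is correct and matches the paper's approach: the paper states the corollary as an immediate consequence of Lemma~\ref{L convexity of reduced lagrangean} without further argument, and your write-up simply spells out how part~2 of that lemma identifies the defining $\liminf$ of the lower semicontinuous envelope with the pointwise formula~\eqref{lambda_extended}. Your added remark that part~1 (convexity) guarantees existence of the one-sided limit in~\eqref{lambda_extended} is a helpful clarification, though in fact the lemma's part~2 already asserts this limit exists (as a $\lim$, not merely a $\liminf$).
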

\begin{remark}
Since the lower semicontinuous envelope of a convex function is convex, we conclude that $\lambda $ is convex with respect to $(v,w) \in [0,+\infty[\times \mathbb R^k$. $\square$
\end{remark}

Replacing the condition $v(t)>0 $ by $v(t)\geq 0 $ in \eqref{Eq control constraints}, one obtains a  problem with controls taking values in the compact set $\left\{ (v,w) \in \mathbb R^{1+k}: v\geq 0, v^2+|w|^2 = 1 \right\}$.
This is the so-called compactification technique, started probably in \cite{Gam}. For a recent contribution, see \cite{GuSar}.
By relaxing the control values to the convex hull $B_k^+$ of this set, we introduce the relaxed problem
\begin{align}
\label{Eq relaxed lagrangian}
& \widehat I(v,w,T)=\int_0^T \lambda \left( y(t), v(t), w(t) \right) dt \rightarrow \min , \\
\label{Eq relaxed system}
& \dot \theta(t) = v(t) , \quad \dot y(t) = f(y(t))v(t) + G(y(t))w(t), \\
\label{Eq control relaxed constraints}
& (v(t),w(t)) \in B_k^+=\{(v,w)| \ v \geq 0, \quad v^2+| w |^2 \leq 1\} \qquad \text{a.e. } t\in[0,T], \\
\label{Eq boundary conditions relaxed}
& \theta(0)=0, \ \theta(T)=1, \ y(0)=x^0, \ y(T)=x^1,
\end{align}
with free $T\in [0,+\infty[$.

\section{The cost functional for Fr\'echet generalized trajectories and controls}
\label{S the cost functional}

The argument used above shows that for an ordinary control $u(t)$, and the  graph of its primitive  $(t,U(t))$, one  gets   for each  $(V,W) \in [(t,U)]$:
\[
J(x_u,u) = \int_0^{V^{\#}(1)} \lambda\left( y_{(v,w)},v,w \right) d \tau .
\]
This suggests that the cost functional \eqref{lag} can be extended onto the space $\mathcal F_{k,1}^0$ and this extension should coincide with the functional
\begin{equation}
\label{Eq extended cost}
I([(V,W)])= \int_0^{V^{\#}(1)} \lambda\left( y_{(v,w)},v,w \right) d \tau  \qquad [(V,W)] \in \mathcal W_1 .
\end{equation}

First, note that the functional \eqref{Eq extended cost} is properly defined, that is:

\begin{proposition}
\label{P parameterization invariance of cost}
The mapping
\[
[(V,W)] \in \mathcal W_1 \mapsto \int_0^{V^{\#}(1)} \lambda\left( y_{(v,w)},v,w \right) d \tau
\]
does not depend on a particular representative  $(V,W) \in   [(V,W)].
\ \square$
\end{proposition}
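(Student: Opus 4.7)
The plan is to verify that the value of the integral on any representative equals its value on the canonical one $(\hat V, \hat W) = (V, W) \circ \ell_{(V, W)}^\#$, which is intrinsically attached to the class $[(V, W)]$. The tools are the positive 1-homogeneity of $\lambda$ in $(v,w)$ and the absolutely continuous change of variables formula.

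From \eqref{lambda_extended}, $\lambda(y, \kappa v, \kappa w) = \kappa \lambda(y, v, w)$ for every $\kappa > 0$: the identity is immediate for $v > 0$ and extends to $v = 0$ via the limit defining $\lambda$ on that slice. In particular $\lambda(y, 0, 0) = 0$. Now fix an arbitrary $(V, W) \in [(V, W)]$. By Lemma \ref{L AC reparameterization}(c)--(d) combined with Proposition \ref{P parameterization invariance of trajectories}, the canonical representative satisfies $y_{(\hat v, \hat w)} = y_{(v, w)} \circ \ell_{(V, W)}^\#$, with $(\hat v, \hat w) = (v, w)/\sqrt{v^2 + |w|^2} \circ \ell_{(V, W)}^\#$ on the set $\{v^2 + |w|^2 > 0\}$, and $\hat V^\#(1) = \ell_{(V, W)}(V^\#(1))$.

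Next, apply the change of variables $\tau = \ell_{(V, W)}(s)$, valid for any nondecreasing absolutely continuous map without requiring strict monotonicity:
\[
\int_0^{\hat V^\#(1)} \lambda(y_{(\hat v, \hat w)}, \hat v, \hat w)\, d\tau = \int_0^{V^\#(1)} \lambda\bigl(y_{(\hat v, \hat w)}(\ell(s)), \hat v(\ell(s)), \hat w(\ell(s))\bigr) \sqrt{v^2 + |w|^2}\, ds.
\]
On the set $E = \{s : v(s)^2 + |w(s)|^2 > 0\}$, substituting the explicit expressions above and absorbing the factor $\sqrt{v^2 + |w|^2}$ via 1-homogeneity with $\kappa = 1/\sqrt{v^2 + |w|^2}$ reduces the integrand on the right to $\lambda(y_{(v, w)}, v, w)$. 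On the complement $[0, V^\#(1)] \setminus E$, the weight $\sqrt{v^2 + |w|^2}$ on the right vanishes and $\lambda(y_{(v, w)}, 0, 0) = 0$ on the left, so neither side sees the complement. This delivers $\int_0^{V^\#(1)} \lambda(y_{(v,w)}, v, w)\, ds = \int_0^{\hat V^\#(1)} \lambda(y_{(\hat v, \hat w)}, \hat v, \hat w)\, d\tau$, proving the invariance.

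The main obstacle is the zero-speed set $\{v^2 + |w|^2 = 0\}$: there the formulas for $(\hat v, \hat w)$ display an apparent $0/0$ and $\ell_{(V, W)}^\#$ may lose injectivity, but both $\lambda$ and the change-of-variables weight vanish on this set and $y_{(v, w)}$ is constant on the corresponding intervals, so these issues are cosmetic. The essential structural input is that $\lambda$ is the parametric integrand associated with $L$, i.e., positively 1-homogeneous in $(v, w)$; without this homogeneity, the integral would genuinely depend on the parameterization.
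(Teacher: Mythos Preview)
Your proof is correct and takes essentially the same approach as the paper, which simply instructs the reader to ``follow the argument of the proof of Proposition~\ref{P parameterization invariance of trajectories}.'' You have spelled out exactly that argument: reduce to the canonical representative via $\ell_{(V,W)}^{\#}$, use the formulas for $(\hat v,\hat w)$ and $y_{(\hat v,\hat w)}$ already established there, and then invoke the positive $1$-homogeneity of $\lambda$ in $(v,w)$ together with the change of variables $\tau=\ell_{(V,W)}(s)$---precisely the parametric invariance the paper alludes to when it calls the integrand ``parametric in the terminology of L.C.~Young.''
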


\begin{proof}
Follow the argument of the proof of  Proposition \ref{P parameterization invariance of trajectories}.
\end{proof}

The following property of the functional \eqref{Eq extended cost} holds:
\begin{proposition}
\label{P cost lower semicontinuity}
The functional $I([(V,W)]) = \int_0^{V^{\#}(T)}\lambda (y_{(v,w)},v,w) dt $ is lower semicontinuous in the space of generalized controls $\left( \mathcal F_{k,T}^0,d_T^+ \right)$. $\square$
\end{proposition}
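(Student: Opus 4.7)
The plan is to take a convergent sequence $[(V_i, W_i)] \to [(V, W)]$ in $(\mathcal F_{k,T}^0, d_T^+)$ and prove $I([(V, W)]) \leq \liminf_i I([(V_i, W_i)])$. Without loss of generality I pass to a subsequence realizing the $\liminf$ and assume the $\liminf$ is finite; since $\mathcal W_T$ is open by Theorem \ref{T extension of input-to trajectory map}, I may further assume every $[(V_i, W_i)] \in \mathcal W_T$. Throughout I work with canonical parameterizations, denoted by the same symbols.

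Next I would extract three convergences. First, Theorem \ref{T continuity Frechet to W} gives $(V_i, W_i) \to (V, W)$ strongly in $W_{1,p}^{1+k}[0, +\infty[$ for every $p \in [1, +\infty[$; in particular $(v_i, w_i) \to (v, w)$ strongly in $L_1^{1+k}$. Second, since the canonical parameterization satisfies $\ell_{(V,W)}(t) = t$, the length term in $d_T^+$ forces $V_i^\#(T) \to V^\#(T)$; I fix $T_0$ larger than all the $V_i^\#(T)$ and $V^\#(T)$. Third, Theorem \ref{T extension of input-to trajectory map} yields $[(V_i, y_{(v_i, w_i)})] \to [(V, y_{(v, w)})]$ in $d_T^+$, and the standard $L_1$-continuity of the solution map of system \eqref{Eq reduced system y} on bounded intervals upgrades this to uniform convergence $y_{(v_i, w_i)} \to y_{(v, w)}$ on $[0, T_0]$.

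The heart of the argument is an Ioffe-type lower semicontinuity theorem applied to the integral functional
\[
(y, v, w) \mapsto \int_0^{T_0} \lambda(y(t), v(t), w(t)) \, dt.
\]
By the Corollary following Lemma \ref{L convexity of reduced lagrangean} together with the subsequent Remark, $\lambda$ is lower semicontinuous in $(y, v, w)$ and convex in $(v, w)$. Uniform convergence of the state trajectories $y_i \to y$ combined with strong (a fortiori weak) $L_1$ convergence of the controls $(v_i, w_i) \to (v, w)$ is the classical setting in which such a theorem yields
\[
\int_0^{T_0} \lambda(y, v, w) \, dt \leq \liminf_i \int_0^{T_0} \lambda(y_i, v_i, w_i) \, dt.
\]
Since the canonical parameterizations are extended by $(v, w) = (0, 0)$ past $V^\#(T)$ and $V_i^\#(T)$, and $\lambda(y, 0, 0) = 0$, the integrals over $[0, T_0]$ coincide with $I([(V, W)])$ and $I([(V_i, W_i)])$ respectively, and the conclusion follows.

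The main obstacle is the singular behavior of the original integrand $L(y, w/v) v$ as $v \to 0^+$ with $|w|$ bounded away from $0$, which is precisely the reason $\lambda$ was introduced as the lower semicontinuous envelope. With this replacement, $\lambda$ takes values in $(-\infty, +\infty]$ and is well-behaved at the boundary $v = 0$, so the Ioffe-type step applies cleanly. A secondary technical point is the uniform lower bound needed for any underlying Fatou estimate: under the linear-growth hypothesis $L \geq a + b|u|$ that motivates the paper, one has $\lambda(y, v, w) \geq av + b|w|$, uniformly bounded from below on the unit ball $\{v^2 + |w|^2 \leq 1\}$ where the canonical parameterizations take values, which allows one to bypass Ioffe and conclude by a direct Fatou argument after extracting a subsequence with $(v_i, w_i) \to (v, w)$ a.e.
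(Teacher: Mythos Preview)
Your proposal follows essentially the same route as the paper: pass to canonical parameterizations, use Theorem~\ref{T continuity Frechet to W} to get strong $L_1$ convergence of $(v_i,w_i)$, use Theorem~\ref{T extension of input-to trajectory map} for uniform convergence of the trajectories, extract an a.e.\ convergent subsequence of the controls, and conclude via Fatou and the lower semicontinuity of $\lambda$. The paper does exactly this, skipping the Ioffe detour and going straight to Fatou.

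There is one genuine (if small) gap. The Fatou step needs $\lambda$ to be bounded below by an integrable function along the sequence, and you justify this only under the linear-growth hypothesis $L(x,u)\geq a+b|u|$. That hypothesis is \emph{not} a standing assumption of the paper; Proposition~\ref{P cost lower semicontinuity} is stated and proved under the sole assumptions that $L$ is continuous and convex in $u$. The paper supplies the missing bound by a short convexity argument: for $0<\delta<\varepsilon<1$ and $(v,w)\in B_k^+$,
\[
L\Bigl(y,\tfrac{w}{v+\varepsilon}\Bigr)(v+\varepsilon)
\leq L\Bigl(y,\tfrac{w}{v+\delta}\Bigr)(v+\delta)+L(y,0)(\varepsilon-\delta),
\]
and letting $\delta\to 0^+$ yields $\lambda(y,v,w)\geq L\bigl(y,\tfrac{w}{v+\varepsilon}\bigr)(v+\varepsilon)-\varepsilon L(y,0)$, which is bounded below on compact sets in $(y,v,w)$. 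Since the trajectories $y_{(v_i,w_i)}$ converge uniformly and the canonical controls live in the unit ball, this gives the uniform lower bound Fatou needs---without invoking linear growth. Your Ioffe-type alternative does not escape this issue either, since that theorem also requires an integrable minorant. Once you patch the lower bound this way, your argument and the paper's coincide.
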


\begin{proof}
For any $0< \delta < \varepsilon < 1$ and $y \in \mathbb R^n$, $(v,w) \in B_k^+$ (satisfying the constraints \eqref{Eq control relaxed constraints}), we get
\begin{align*}
L\left( y, \frac{w}{v+\varepsilon} \right)(v+\varepsilon) = &
L\left( y, \frac{v+\delta}{v+\varepsilon} \frac{w}{v+\delta} + \frac{\varepsilon-\delta}{v+\varepsilon} 0\right)(v+\varepsilon) \leq
\\ \leq &
L\left( y, \frac{w}{v+\delta} \right)(v+\delta)
+L\left( y, 0 \right)(\varepsilon-\delta) .
\end{align*}
Passing to the limit at the right-hand side, as $\delta \to 0^+$, we invoke Lemma \ref{L convexity of reduced lagrangean}  to conclude
\[
\lambda (y,v,w) \geq
L\left( y, \frac{w}{v+\varepsilon} \right)(v+\varepsilon)
-L\left( y, 0 \right)\varepsilon ,
\]
and thus  $\lambda$ is bounded from below on compact sets.

Fix a sequence $\left\{ [(V_i,W_i)] \in \mathcal F_{k,T}^0 \right\}_{i \in \mathbb N} $ converging to $[(V,W)]$ with respect to the metric $d_T^+$, and let $\{ (V_i,W_i)\}_{i \in \mathbb N}$, $(V,W)$ be the respective canonical representatives.

By  Theorem \ref{T extension of input-to trajectory map}, $y_{(v_i,w_i)}$ converges uniformly to $y_{(v,w)}$.
By the Theorem \ref{T continuity Frechet to W}, $(v_i,w_i)$ converges to $(v,w)$ with respect to the $L_1$-norm. Therefore, there is a subsequence $(v_{i_j},w_{i_j})$ converging pointwise almost everywhere to $(v,w)$.

Using Fatou's Lemma:
\begin{align*}
&
\liminf_{j \rightarrow \infty} I\left( [(V_{i_j},W_{i_j})] \right) =
\liminf_{j \rightarrow \infty}  \int_0^{V_{i_j}^{\#}(T)}\lambda (y_{(v_{i_j},w_{i_j})},v_{i_j},w_{i_j}) dt \geq
\\ \geq &
\int_0^{V^{\#}(T)}\liminf_{j \rightarrow \infty} \lambda (y_{(v_{i_j},w_{i_j})},v_{i_j},w_{i_j}) dt \geq
\int_0^{V^{\#}(T)}\lambda (y_{(v,w)},v,w) dt = I\left( [(V,W)] \right) .
\end{align*}
\end{proof}

In order to extend the optimal control problem \eqref{lag}--\eqref{boundary conditions} onto the class of Fr\'echet generalized controls $\mathcal F_{k,1}^0$, let us recall some of previously obtained results:
\begin{itemize}
\item
The functional $[(V,W)] \mapsto I\left( [(V,W)] \right)$ is lower semicontinuous in $\mathcal F_{k,1}^0$ and
\[
I\left( [(t,U)] \right) = J(x_u,u) \qquad \forall u \in L_1^k[0,1] .
\]
\item
The input-to-trajectory map $[(V,W)] \mapsto [(V,y_{(v,w)})]$ is the unique continuous extension of the input-to-trajectory of system \eqref{affine system}. A generalized trajectory has a well defined endpoint given by $y_{(v,w)}\circ V^\#(1)$. This point does not depend on a particular $(V,y_{(v,w)}) \in [(V,y_{(v,w)})]$; it is the $x$-component of the point, at  which the Fr\'echet curve $[(V,y_{(v,w)})]$ crosses (leaves) the hyperplane $\{ (t,x)\in \mathbb R^{1+n}: t=1 \}$.
\end{itemize}
Basing on these considerations, we introduce the extended problem:
\begin{align}
\label{Eq generalized cost}
& I\left( [(V,W)] \right) \rightarrow \min ,
\\ &
\label{Eq generalized boundary conditions}
[(V,W)] \in \mathcal F_{k,1}^0, \quad
y_{(v,w)} \circ V^\#(T) = x^1 .
\end{align}
This problem is equivalent to the relaxed problem
\eqref{Eq relaxed lagrangian}--\eqref{Eq boundary conditions relaxed},
in the following sense:
\begin{itemize}
\item[{\bf i)}] If $(v,w)$ is an optimal control for the problem \eqref{Eq relaxed lagrangian}--\eqref{Eq boundary conditions relaxed}, then $[(V,W)]= \left[\int_0^{( \cdot )} (v,w) d \tau \right]$ is optimal for the problem \eqref{Eq generalized cost}--\eqref{Eq generalized boundary conditions} and the corresponding generalized trajectory is $\left[ ( V,y_{(v,w)})\right]$;
\item[{\bf ii)}] If  $[(V,W)]$ is optimal for the problem \eqref{Eq generalized cost}--\eqref{Eq generalized boundary conditions} and $(V,W)$ is its canonical representative, then $(v,w)=(\dot V, \dot W)$ is an optimal control for the problem \eqref{Eq relaxed lagrangian}--\eqref{Eq boundary conditions relaxed}.
\end{itemize}

\section{Existence of Fr\'echet generalized minimizers for integrands with linear growth }
\label{S existence}

Following the aforesaid, the problem
\eqref{lag}--\eqref{boundary conditions} admits a generalized solution if and only if the problem \eqref{Eq relaxed lagrangian}--\eqref{Eq boundary conditions relaxed} admits a solution.
In this Section we prove existence of minimizer for \eqref{Eq relaxed lagrangian}--\eqref{Eq boundary conditions relaxed}.

To this end we start with a classical Ascoli-Arzel\`a-Filippov argument  to obtain a general necessary and sufficient condition (Proposition \ref{P existence of solution}) for existence  of minimizer for the relaxed problem.  Then we prove that this condition is satisfied in the case of the integrand of linear growth in control variable(s).
\medskip

Consider the set $B_k^+$ defined by \eqref{Eq control relaxed constraints}
and let  for each $y \in \mathbb R^n$:
\begin{align*}
Q(y) = \Big\{
(\phi, v, f(y)v+G(y)w ) :  (v,w) \in B^+ , \phi \geq \lambda (y,v,w)
\Big\} .
\end{align*}
We pass to the differential inclusion form of the problem \eqref{Eq relaxed lagrangian}--\eqref{Eq boundary conditions relaxed}, following  the scheme developed in \cite{Cesa}:
\begin{align}
\label{Eq nonparametric lagrangian}
& C(T) \rightarrow \min, \\
\label{Eq nonparametric dynamics}
& \left( \dot C(t), \dot \theta(t), \dot y(t) \right) \in Q(y(t)) \quad \text{a.e. } t \in [0,T], \\
\label{Eq nonparametric boundary 1}
& C(0)=0, \ \ \theta (0) = 0, \ \ \theta(T) = 1, \\
\label{Eq nonparametric boundary 2}
& y(0)=x^0, \ \  y(T) = x^1,
\end{align}
with free $T\in[0,+\infty[$.

For each $y \in \mathbb R^n$, $\varepsilon >0 $, let $ Q(B_\varepsilon(y)) = \bigcup\limits_{|z-y|<\varepsilon} Q(z)$.
The following Lemma shows that the differential inclusion \eqref{Eq nonparametric dynamics} is continuous.

\begin{lemma}
\label{L continuity inclusion}
For every $y \in \mathbb R^n$, $\ Q(y) = \bigcap\limits_{\varepsilon >0} Q(B_\varepsilon(y))$. $\square$
\end{lemma}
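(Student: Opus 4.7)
The inclusion $Q(y) \subseteq \bigcap_{\varepsilon > 0} Q(B_\varepsilon(y))$ is immediate from the definitions. The reverse inclusion is the substantive content of the Lemma, and I would prove it by a standard compactness-plus-passage-to-the-limit argument, resting on three ingredients already in hand: continuity of $f$ and $G$ (both are locally Lipschitz, by the standing assumptions), compactness of the control set $B_k^+$, and lower semicontinuity of the integrand $\lambda$ (granted by the Corollary immediately preceding the Lemma, which asserts that $\lambda$ is the lower semicontinuous envelope of $(y,v,w)\mapsto L(y,w/v)v$).

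\textbf{Construction of an approximating sequence.} Fix $(\phi,\xi,\eta) \in \bigcap_{\varepsilon > 0} Q(B_\varepsilon(y))$. For each $j \in \mathbb N$, membership in $Q(B_{1/j}(y))$ produces a point $y_j$ with $|y_j - y| < 1/j$ and a pair $(v_j,w_j) \in B_k^+$ such that the triple
\[
(\phi,\, v_j,\, f(y_j) v_j + G(y_j) w_j) = (\phi,\xi,\eta)
\]
and $\phi \geq \lambda(y_j, v_j, w_j)$. Comparing second coordinates forces $v_j \equiv \xi$ for every $j$. Since $B_k^+$ is compact, I can extract a subsequence (not relabelled) along which $w_j \to w^\star$, with $(\xi, w^\star) \in B_k^+$.

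\textbf{Passage to the limit.} Continuity of $f$ and $G$ gives $f(y_j)\xi + G(y_j)w_j \to f(y)\xi + G(y)w^\star$, so that $\eta = f(y)\xi + G(y)w^\star$. Lower semicontinuity of $\lambda$ then yields
\[
\phi \;\geq\; \liminf_{j\to\infty} \lambda(y_j,\xi,w_j) \;\geq\; \lambda(y,\xi,w^\star),
\]
so $(\phi,\xi,\eta) = (\phi, \xi, f(y)\xi + G(y)w^\star) \in Q(y)$, which is the sought inclusion. I expect the only potential sticking point to be the joint lower semicontinuity of $\lambda$ in $(y,v,w)$: the Corollary states $\lambda$ is the lsc envelope of $L(y,w/v)v$, so by construction $\lambda$ is itself lsc, and the argument reduces to a textbook Filippov-type limit.
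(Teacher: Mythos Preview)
Your proof is correct and follows essentially the same route as the paper's: extract a convergent subsequence of the control parameters from compactness of $B_k^+$, pass to the limit in the dynamics via continuity of $f$ and $G$, and invoke lower semicontinuity of $\lambda$ for the cost inequality. Your observation that $v_j \equiv \xi$ is forced by the second coordinate is a minor sharpening (the paper simply extracts a convergent subsequence of the full pair $(v_i,w_i)$), but the argument is otherwise identical.
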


\begin{proof}
Fix $(\phi,V) \in  \bigcap\limits_{\varepsilon >0}  Q(B_\varepsilon(y)) $. By definition, there is a sequence
\linebreak
$\left\{ (y_i,v_i,w_i) \in B_{\frac 1 i}(y) \times B^+ \right\}_{i \in \mathbb N}$ such that
\begin{align*}
&V = \left(v_i, f(y_i)v_i+G(y_i)w_i \right), \qquad 
\phi \geq \lambda (y_i,v_i,w_i)
\end{align*}
for every $i \in \mathbb N $.

Since $\{(v_i,w_i)\}$ is bounded, we can assume, passing to a subsequence, that it converges to some $(v,w) \in B^+$. By continuity of $f,G$, we have $V=\left(v,f(y)v+G(y)w \right)$. By  Lemma \ref{L convexity of reduced lagrangean}, $\lambda(y,v,w) \leq \liminf \lambda (y_i,v_i,w_i) \leq \phi$.
Therefore, $(\phi, V) \in Q(y)$.
\end{proof}

The classical Filippov selection theorem requires the Lagrangian to be continuous, a condition that is not guaranteed for the auxiliary Lagrangian $\lambda$. However we manage to  prove that Filippov's theorem holds for  $\lambda $.

\begin{proposition}
\label{P Filippov}
Fix $(C,\theta,y)$, a trajectory of the differential inclusion \eqref{Eq nonparametric dynamics}, defined in the interval $[0,T]$.

There is a measurable control $(v,w):[0,T] \mapsto B^+ $ such that
\begin{align*}
& \dot \theta(t) = v(t), \quad \dot y(t) = f(y(t))v(t) + G(y(t))w(t)  \\
& \dot C(t) \geq \lambda (y(t),v(t),w(t)) ,
\end{align*}
for a.e. $t \in [0,T]. \ \square$
\end{proposition}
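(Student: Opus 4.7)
The plan is to realize $(v(t),w(t))$ as a Lebesgue measurable selection of a multifunction $\Phi:[0,T]\to 2^{B^+}$ encoding the three conditions of the statement. I set
\begin{align*}
\Phi(t) := \bigl\{ (v,w) \in B^+ :\ & v = \dot\theta(t),\ f(y(t))v + G(y(t))w = \dot y(t), \\
& \lambda(y(t),v,w) \leq \dot C(t) \bigr\}.
\end{align*}
First I verify that $\Phi(t)$ is nonempty for a.e.\ $t$: this is exactly the content of the assumption $(\dot C(t),\dot\theta(t),\dot y(t)) \in Q(y(t))$ combined with the definition of $Q$. Next, $\Phi(t)$ is closed (hence compact in the compact set $B^+$) for every such $t$: the equality constraints cut out a closed affine slice, and the inequality $\lambda(y(t),v,w) \leq \dot C(t)$ defines a closed set because, by the Corollary following Lemma \ref{L convexity of reduced lagrangean}, $\lambda$ is lower semicontinuous jointly in $(y,v,w)$.

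The core of the argument is measurability of the multifunction $t \mapsto \Phi(t)$. Since the trajectory $y$ is continuous and $\dot\theta,\dot y,\dot C$ are Lebesgue measurable, the affine-slice multifunction
\[
A(t) := \bigl\{ (v,w) \in B^+ : v = \dot\theta(t),\ f(y(t))v + G(y(t))w = \dot y(t) \bigr\}
\]
has Lebesgue$\times$Borel measurable graph and is therefore a measurable closed-valued multifunction. For the inequality part I would view
\[
\varphi(t,v,w) := \lambda(y(t),v,w) - \dot C(t)
\]
as a normal integrand: it is Lebesgue measurable in $t$ as a composition of the Borel function $\lambda$ with measurable data, and lower semicontinuous in $(v,w)$ for each fixed $t$. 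By a standard fact about normal integrands, the sublevel multifunction $E(t) := \{(v,w) : \varphi(t,v,w) \leq 0\}$ is then measurable and closed-valued. The intersection $\Phi = A \cap E$ of two measurable closed-valued multifunctions with values in the Polish space $B^+$ is measurable.

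Finally, an application of the Kuratowski--Ryll-Nardzewski selection theorem to $\Phi$, restricted to the measurable set of full measure on which it is nonempty, produces a Lebesgue measurable map $(v,w):[0,T] \to B^+$ with $(v(t),w(t)) \in \Phi(t)$ almost everywhere, which is the desired control.

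The principal obstacle I expect is precisely the lower semicontinuity (and not continuity) of $\lambda$ on the boundary $\{v=0\}$ of $B^+$, which rules out a direct appeal to Filippov's classical implicit function lemma as stated for continuous integrands. The fix is to forgo any attempt at a continuous parametrization of $Q(y)$ and instead rely on the machinery of normal integrands and measurable closed-valued multifunctions, where the needed closure and measurability properties survive under mere lower semicontinuity.
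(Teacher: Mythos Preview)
Your argument is correct. The decomposition $\Phi = A \cap E$, with $A$ the Carath\'eodory affine slice and $E$ the sublevel multifunction of the normal integrand $\varphi(t,v,w)=\lambda(y(t),v,w)-\dot C(t)$, yields a closed-valued Lebesgue-measurable multifunction on the complete measure space $[0,T]$, and Kuratowski--Ryll-Nardzewski then produces the required selection. The point you flag---that $\lambda$ is only lower semicontinuous at $v=0$, so the classical Filippov lemma for continuous integrands does not apply---is exactly the obstacle, and your passage to the normal-integrand/measurable-selection framework handles it cleanly.

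The paper takes a different, more hands-on route. Rather than invoke an abstract selection theorem, it first proves directly (by writing the relevant sets as countable unions and intersections over rational parameters, using Lemma~\ref{L convexity of reduced lagrangean} to reduce to the continuous function $L$) that for each compact $K\subset B^+$ the infimum function $F_K(t)=\inf\{\lambda(y(t),v,w):(v,w)\in B^+\cap K,\ (v,f(y(t))v+G(y(t))w)=V_t\}$ is measurable. It then builds a nested sequence of finite measurable partitions of $B^+$ with mesh tending to zero, and at each stage selects the first cell on which the constrained infimum is attained; the resulting simple-function approximations form a Cauchy sequence whose limit is the desired measurable $(v,w)$. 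A byproduct of this construction is that the selected control actually \emph{minimizes} $\lambda(y(t),\cdot,\cdot)$ over the admissible fiber, not merely satisfies $\lambda\le\dot C$. Your approach is shorter and conceptually transparent but relies on the measurable-selection machinery as a black box; the paper's approach is self-contained and yields a slightly sharper selection, at the cost of a longer explicit construction.
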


\begin{proof}
See Appendix (Section \ref{SP P Filippov}).
\end{proof}

\begin{corollary}
\label{C equivalence nonparametric}
If $\{ (v_i,w_i)\}_{i \in \mathbb N}$ is a minimizing sequence for the problem \eqref{Eq relaxed lagrangian}--\eqref{Eq boundary conditions relaxed}, then
\begin{equation}
\label{Eq minimizing sequence nonparametric}
\left\{
\left(\int_0^{(\cdot)} \lambda (y_{(v_i,w_i},v_i,w_i) dt, V_i, y_{(v_i,w_i)} \right)
\right\}_{i \in \mathbb N}
\end{equation}
is a minimizing sequence for the  problem \eqref{Eq nonparametric lagrangian}--\eqref{Eq nonparametric boundary 2}.

The problem \eqref{Eq relaxed lagrangian}--\eqref{Eq boundary conditions relaxed} admits a solution if and only if the  problem \eqref{Eq nonparametric lagrangian}--\eqref{Eq nonparametric boundary 2} does. $\square$
\end{corollary}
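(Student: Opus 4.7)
The plan is to prove the two assertions by showing that the infima of the two problems coincide, using the Filippov-type selection of Proposition \ref{P Filippov} as the principal tool. The direction ``relaxed $\Rightarrow$ nonparametric'' is essentially the definition of $Q(y)$, while the converse direction is where Filippov is needed.

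First I would show the inequality $\inf \eqref{Eq nonparametric lagrangian} \leq \inf \eqref{Eq relaxed lagrangian}$. Given any admissible $(v,w,T)$ for the relaxed problem, define
\[
C(t) = \int_0^t \lambda(y_{(v,w)}(s), v(s), w(s))\, ds,\qquad \theta(t)=V(t),\qquad y(t)=y_{(v,w)}(t).
\]
Then $(\dot C, \dot\theta, \dot y) = (\lambda(y,v,w), v, f(y)v+G(y)w) \in Q(y)$ a.e.\ by the very definition of $Q$, and the boundary conditions \eqref{Eq nonparametric boundary 1}--\eqref{Eq nonparametric boundary 2} coincide with \eqref{Eq boundary conditions relaxed} together with $C(0)=0$. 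Moreover $C(T) = \widehat I(v,w,T)$. Applied to a minimizing sequence $\{(v_i,w_i,T_i)\}$ of the relaxed problem, this construction yields the sequence \eqref{Eq minimizing sequence nonparametric} with matching cost values, proving the first statement of the corollary and giving $\inf \eqref{Eq nonparametric lagrangian} \leq \inf \eqref{Eq relaxed lagrangian}$.

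Next I would establish the reverse inequality using Proposition \ref{P Filippov}. Pick any admissible $(C,\theta,y,T)$ of \eqref{Eq nonparametric dynamics}--\eqref{Eq nonparametric boundary 2}. Filippov's selection produces a measurable $(v,w):[0,T] \to B_k^+$ with $\dot\theta=v$, $\dot y = f(y)v+G(y)w$, and $\dot C \geq \lambda(y,v,w)$ a.e. Then $y=y_{(v,w)}$ by uniqueness for \eqref{Eq reduced system}, the boundary conditions \eqref{Eq boundary conditions relaxed} are inherited from \eqref{Eq nonparametric boundary 1}--\eqref{Eq nonparametric boundary 2}, and integration yields
\[
\widehat I(v,w,T) = \int_0^T \lambda(y,v,w)\, dt \leq \int_0^T \dot C\, dt = C(T).
\]
This shows $\inf \eqref{Eq relaxed lagrangian} \leq \inf \eqref{Eq nonparametric lagrangian}$, so the two infima agree.

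Finally the ``if and only if'' for existence of minimizers follows by applying the two constructions to optimizers. If $(v^*,w^*,T^*)$ minimizes the relaxed problem, the associated $(C^*,\theta^*,y^*)$ built as above is admissible for the nonparametric problem and $C^*(T^*)=\widehat I(v^*,w^*,T^*)=\inf \eqref{Eq nonparametric lagrangian}$, so it is a minimizer there. Conversely, if $(C^*,\theta^*,y^*,T^*)$ minimizes the nonparametric problem, Proposition \ref{P Filippov} extracts $(v^*,w^*)$ admissible for the relaxed problem with $\widehat I(v^*,w^*,T^*) \leq C^*(T^*) = \inf \eqref{Eq relaxed lagrangian}$, forcing equality. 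The only delicate point in this scheme is ensuring the Filippov selection yields a \emph{measurable} $(v,w)$ despite the lower-semicontinuity (rather than continuity) of $\lambda$; this is precisely the content of Proposition \ref{P Filippov}, so here it is invoked as a black box.
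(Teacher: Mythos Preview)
Your proof is correct and follows essentially the same approach as the paper's own argument: both directions rely on the same two constructions---embedding admissible relaxed controls into admissible nonparametric trajectories with equal cost, and using Proposition \ref{P Filippov} to select an admissible relaxed control from any nonparametric trajectory with cost not exceeding $C(T)$. The only difference is organizational: you establish the equality of infima directly via two inequalities, whereas the paper phrases the first assertion as a contradiction argument, but the content is identical.
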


\begin{proof}
Due to the Proposition \ref{P Filippov}, if  \eqref{Eq minimizing sequence nonparametric} fails to be a minimizing sequence for the problem \eqref{Eq nonparametric lagrangian}--\eqref{Eq nonparametric boundary 2}, then there would exist  an admissible control $(\hat v, \hat w)$, for which
  \[
\int_0^{\hat V^\#(1)} \lambda (y_{(\hat v, \hat w},\hat v,\hat w) dt <
\liminf \int_0^{V_i^\#(1)} \lambda (y_{(v_i,w_i},v_i,w_i) dt,
\]
which is a contradiction.

It follows that, whenever $(v,w)$ is a solution for the problem \eqref{Eq relaxed lagrangian}--\eqref{Eq boundary conditions relaxed}, then
\[
\left(\int_0^{(\cdot)} \lambda (y_{(v,w},v,w) dt, V, y(v,w) \right)
\]
must be  solution for the problem \eqref{Eq nonparametric lagrangian}--\eqref{Eq nonparametric boundary 2}.

Now, suppose that the  problem \eqref{Eq nonparametric lagrangian}--\eqref{Eq nonparametric boundary 2} admits a solution $(C,\theta,y)$. By  Proposition \ref{P Filippov}, there is an admissible control $(v,w)$ such that $I(v,w) \leq C(T)$. As far as  the infimum of problem \eqref{Eq relaxed lagrangian}--\eqref{Eq boundary conditions relaxed} cannot be strictly less than the minimum of \eqref{Eq nonparametric lagrangian}--\eqref{Eq nonparametric boundary 2}, it follows that $(v,w)$ must be a solution for \eqref{Eq relaxed lagrangian}--\eqref{Eq boundary conditions relaxed}.
\end{proof}

The following Proposition provides  necessary and sufficient condition for existence of solution of the problem \eqref{Eq nonparametric lagrangian}--\eqref{Eq nonparametric boundary 2}.

\begin{proposition}
\label{P existence of solution}
The problem \eqref{Eq nonparametric lagrangian}--\eqref{Eq nonparametric boundary 2} admits a solution if and only if it admits a minimizing sequence $\left\{ (C_i, \theta_i, y_i, T_i) \right\}_{i \in \mathbb N}$ for which  the sequences
\[
\{T_i \}_{i \in \mathbb N}, \quad
\left\{ \left\|(C_i, \theta_i, y_i) \right\|_{L_\infty[0,T_i]} \right\}_{i \in \mathbb N}
\]
are bounded. $\square$
\end{proposition}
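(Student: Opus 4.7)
The ``only if'' direction is immediate: any solution $(C^*,\theta^*,y^*,T^*)$ yields the constant minimizing sequence with $T_i \equiv T^*$ and uniformly bounded trajectories. For the converse I intend a classical Cesari--Ascoli--Arzel\`a closure argument for the differential inclusion \eqref{Eq nonparametric dynamics}.

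First, I reduce to the control form and enforce monotonicity. Starting from a minimizing sequence $\{(C_i,\theta_i,y_i,T_i)\}$ meeting the boundedness hypotheses, I apply Proposition \ref{P Filippov} to pick measurable $(v_i,w_i) \colon [0,T_i] \to B_k^+$ realizing $(\theta_i,y_i)$ with $\dot C_i \geq \lambda(y_i,v_i,w_i)$, and replace $C_i$ by the smaller function $t \mapsto \int_0^t \lambda(y_i,v_i,w_i)\,ds$, which preserves the minimizing property. Continuity of $L$, the uniform bound on $\{y_i\}$, and the containment $(v_i,w_i) \in B_k^+$ yield a common lower bound $m \in \mathbb R$ for the integrands $\lambda(y_i,v_i,w_i)$, so $t \mapsto C_i(t) - mt$ is nondecreasing. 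I then extract convergent subsequences. Since $\{T_i\}$ is bounded, pass to $T_i \to T^* \leq \bar T$; since $(0,0) \in B_k^+$ and $\lambda(y,0,0)=0$ by \eqref{lambda_extended}, each $(C_i,\theta_i,y_i)$ extends admissibly to a common interval $[0,\bar T]$ by freezing after $T_i$. Local Lipschitz continuity of $f, G$, the bound $|(v_i,w_i)| \leq 1$, and the $L_\infty$-bound on $y_i$ make $(\theta_i,y_i)$ equi-Lipschitz, so Ascoli--Arzel\`a gives uniform convergence $(\theta_i,y_i) \to (\theta^*,y^*)$ along a subsequence. The shifted $C_i$'s are uniformly bounded and nondecreasing, hence of uniformly bounded variation; Helly's selection theorem then yields a further subsequence converging pointwise a.e.\ to a bounded-variation $C^*$.

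To pass \eqref{Eq nonparametric dynamics} to the limit I invoke the closure theorem for differential inclusions with upper semicontinuous convex-valued right-hand side: Lemma \ref{L continuity inclusion} gives upper semicontinuity of $Q$, and $Q(y)$ is convex because $B_k^+$ is convex, $(v, f(y)v+G(y)w)$ depends linearly on $(v,w)$, and $\lambda(y,\cdot,\cdot)$ is convex by the Remark after Lemma \ref{L convexity of reduced lagrangean}. This gives $(\dot C^*,\dot \theta^*,\dot y^*) \in Q(y^*)$ a.e.\ on $[0,T^*]$, and pointwise convergence transfers the boundary conditions \eqref{Eq nonparametric boundary 1}--\eqref{Eq nonparametric boundary 2}. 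Combined with $C^*(T^*) \leq \liminf C_i(T_i) = \inf$, this makes $(C^*,\theta^*,y^*,T^*)$ optimal. The main obstacle is the passage to the limit in $\dot C_i$: the $C_i$'s are only of bounded variation and may have jumps, so the distributional derivative $\dot C^*$ can carry singular mass not captured by pointwise limits; the Step~1 shift makes the $\dot C_i$ positive measures, so any singular part of the limit contributes nonnegatively to $C^*(T^*)$, which together with the closure theorem for the absolutely continuous part yields the required lower-semicontinuity estimate for the cost.
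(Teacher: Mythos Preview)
Your proof is correct and follows the same direct-method strategy as the paper: extract a convergent subsequence via compactness, then invoke the closure property of the orientor field $Q$ (the paper's Lemma~\ref{L continuity inclusion} is exactly the upper-semicontinuity ingredient you cite for the closure theorem). The paper's own proof is extremely terse---it simply asserts that the minimizing sequence is ``bounded and equicontinuous,'' applies Ascoli--Arzel\`a to the full triple $(C_i,\theta_i,y_i)$, and appeals to Lemma~\ref{L continuity inclusion} for closure.

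You are more careful on one point the paper glosses over: equicontinuity of the cost component $C_i$ is not automatic, since $\dot C_i$ is bounded below by $\lambda$ but carries no a~priori upper bound from the definition of $Q(y)$. Your detour through Proposition~\ref{P Filippov} (to replace $C_i$ by the minimal cost), the shift $C_i - mt$ to a monotone function, Helly's selection theorem, and the observation that any singular mass in the limiting measure is nonnegative (hence can be dropped without increasing $C^*(T^*)$) is the standard rigorous way to close this gap. The paper presumably has the same fix in mind, or is implicitly relying on Cesari's framework where this is handled, but does not spell it out. So your argument is essentially the paper's, made precise where the paper is sketchy.
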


\begin{proof}
The condition is clearly necessary.

To verify  sufficiency, note that such a sequence is bounded and equicontinuous. Therefore, the Ascoli-Arzel\`a Theorem guarantees existence of a subsequence, converging to a limit. Lemma \ref{L continuity inclusion} guarantees that the limit solves the differential inclusion \eqref{Eq nonparametric dynamics} and hence the limit is optimal.
\end{proof}

The Corollary \ref{C equivalence nonparametric} and the Proposition \ref{P existence of solution} immediately imply the following:

\begin{corollary}
\label{C existence of solution}
The relaxed problem \eqref{Eq relaxed lagrangian}--\eqref{Eq boundary conditions relaxed} admits a solution if and only if it has finite infimum and admits a minimizing sequence $\{ ( v_i, w_i )\} $ for which the sequences
\[
\left\{ T_i \right\}_{i \in \mathbb N}, \quad \left\{ \left\| y_{(v_i,w_i)} \right\|_{L_\infty[0,T_i]} \right\}_{i \in \mathbb N}
\]
are bounded. $\square$
\end{corollary}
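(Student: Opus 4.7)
The plan is to read Corollary \ref{C existence of solution} as a direct consequence of two already-established facts: Corollary \ref{C equivalence nonparametric}, which identifies minimizing sequences of the relaxed problem with minimizing sequences of the differential-inclusion problem \eqref{Eq nonparametric lagrangian}--\eqref{Eq nonparametric boundary 2}, and Proposition \ref{P existence of solution}, which characterizes solvability of the latter by the existence of a minimizing sequence that is bounded in $T$ and in $L_\infty$-norm on $[0,T_i]$. So the task reduces to transporting the hypothesized boundedness from the relaxed problem to the inclusion problem.

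For necessity, I would simply take the constant sequence equal to a minimizer $(v,w)$ with horizon $T$; this trivially satisfies both boundedness requirements and has finite cost.

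For sufficiency, starting from a minimizing sequence $\{(v_i,w_i)\}$ of the relaxed problem with $\{T_i\}$ and $\{\|y_{(v_i,w_i)}\|_{L_\infty[0,T_i]}\}$ bounded, I would form the corresponding sequence of inclusion trajectories
\[
\bigl(C_i,\theta_i,y_i\bigr)(t)=\left(\int_0^t\lambda(y_{(v_i,w_i)},v_i,w_i)\,d\tau,\ V_i(t),\ y_{(v_i,w_i)}(t)\right),
\]
which by Corollary \ref{C equivalence nonparametric} is a minimizing sequence for \eqref{Eq nonparametric lagrangian}--\eqref{Eq nonparametric boundary 2}. Boundedness of $\theta_i=V_i$ in $L_\infty[0,T_i]$ is immediate, because $\dot V_i=v_i\in[0,1]$ gives $0\le V_i(t)\le T_i$, and $\{T_i\}$ is bounded by assumption. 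Boundedness of $y_i$ is assumed. Then I invoke Proposition \ref{P existence of solution} to obtain an optimal trajectory of the inclusion, and Proposition \ref{P Filippov} to extract from it an admissible relaxed control attaining the infimum; by the last lines of the proof of Corollary \ref{C equivalence nonparametric}, this control is a solution of \eqref{Eq relaxed lagrangian}--\eqref{Eq boundary conditions relaxed}.

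The only step that requires genuine work is the uniform bound on $\|C_i\|_{L_\infty[0,T_i]}$, which is where I expect the main technical point. The endpoint values $C_i(T_i)=I(v_i,w_i)$ converge to the (finite) infimum and are therefore bounded. To turn this into an $L_\infty$ bound on $[0,T_i]$ I use that $\lambda$ is bounded below on compact subsets of $\mathbb{R}^n\times B_k^+$: since $\{y_i\}$ is equibounded and $(v_i,w_i)$ take values in the fixed compact set $B_k^+$, there exists $M<\infty$ with $\lambda(y_i(t),v_i(t),w_i(t))\ge -M$ for a.e.\ $t\in[0,T_i]$ and all $i$. This gives, for every $t\in[0,T_i]$,
\[
-MT_i \le C_i(t) \le C_i(T_i)+M(T_i-t),
\]
so $\{C_i\}$ is bounded in $L_\infty[0,T_i]$ uniformly in $i$, completing the verification of the hypotheses of Proposition \ref{P existence of solution}.
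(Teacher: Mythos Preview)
Your proposal is correct and follows the route the paper intends: the paper simply states that Corollary~\ref{C equivalence nonparametric} and Proposition~\ref{P existence of solution} ``immediately imply'' the result, without giving any details. You have supplied those details, and in particular you correctly identify the one nontrivial point left implicit in the paper---the uniform $L_\infty$ bound on the cost components $C_i$---and handle it via the lower bound for $\lambda$ on compact sets (established in the proof of Proposition~\ref{P cost lower semicontinuity}) together with the finiteness of the infimum.
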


Using this Corollary, we will prove existence of generalized solutions when the Lagrangian \eqref{lag} has linear growth with respect to controls:

\begin{proposition}
\label{P existence linear growth}
Suppose the following conditions hold:
\begin{itemize}
\item[i)]
There are constants $a \in \mathbb R$, $b>0$ such that
\[
L(x,u) \geq a+b|u| \qquad \forall (x,u) \in \mathbb R^{n+k} .
\]
\item[ii)]
There are constants $ \tilde a, \tilde b < +\infty $ such that
\begin{align*}
&
|G(x)u| \leq (\tilde a + \tilde b |x|)|u| + \tilde b L(x,u), \\
&|f(x)| \leq \tilde a+ \tilde b (|x|+ L(x,u))  \qquad \forall (x,u) \in \mathbb R^{n+k} .
\end{align*}
\end{itemize}
Then, the relaxed problem \eqref{Eq relaxed lagrangian}--\eqref{Eq boundary conditions relaxed} admits a minimizer, i.e., the original problem \eqref{lag}--\eqref{boundary conditions} admits a Fr\'echet generalized minimizer. $\square$
\end{proposition}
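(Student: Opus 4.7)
The strategy is to invoke Corollary~\ref{C existence of solution}: starting from an arbitrary minimizing sequence $\{(v_i,w_i,T_i)\}$ for the relaxed problem \eqref{Eq relaxed lagrangian}--\eqref{Eq boundary conditions relaxed}, I need to produce uniform upper bounds on $T_i$ and on $\|y_{(v_i,w_i)}\|_{L_\infty[0,T_i]}$. Note that the infimum is finite since hypothesis (i) gives $I(v,w,T)\geq a+b\int_0^T|w|\,dt \geq a$.

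First, I would exploit the parametric character of the integrand, namely $\lambda(y,\kappa v,\kappa w)=\kappa\lambda(y,v,w)$, to reduce to the arc-length parameterization by means of the time change $t\mapsto \sigma_i(t)=\int_0^t\sqrt{v_i^2+|w_i|^2}\,ds$. The cost, the dynamics, and the boundary conditions are preserved, and the new controls satisfy $v_i^2+|w_i|^2=1$ a.e.; in particular the reparameterization is well-defined since $\int v_i\,dt=1$ forces $\sigma_i(T_i)>0$. Hence I may assume without loss of generality that the minimizing sequence lies on the unit sphere a.e.

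Next, hypothesis (i) combined with parametricity yields the pointwise bound
\[
\lambda(y,v,w)\ \geq\ av+b|w|\qquad \forall(y,v,w)\in\mathbb R^n\times B_k^+,
\]
where the case $v=0$ follows by passing to the limit in the definition \eqref{lambda_extended}. Since $\int_0^{T_i}v_i\,dt=1$ and $\int_0^{T_i}\lambda\,dt\leq M$ for some $M<+\infty$, this gives $\int_0^{T_i}|w_i|\,dt\leq (M-a)/b$. On the set $\{|w_i|\leq 1/2\}$ the constraint $v_i^2+|w_i|^2=1$ forces $v_i\geq\sqrt 3/2$; Chebyshev controls the measure of the complementary set by $2(M-a)/b$, and then $\int_0^{T_i}v_i\,dt=1$ yields a uniform upper bound on $T_i$.

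Finally, hypothesis (ii), applied with $u=w_i/v_i$ and multiplied through by $v_i$ (and extended to $v_i=0$ by the same limiting procedure that defines $\lambda$ there), produces the key pointwise estimate
\[
|\dot y_i|\ =\ |f(y_i)v_i+G(y_i)w_i|\ \leq\ (\tilde a+\tilde b|y_i|)(v_i+|w_i|)+2\tilde b\,\lambda(y_i,v_i,w_i).
\]
Since $v_i+|w_i|\leq\sqrt 2$, integrating and using the already established bounds on $T_i$ and on $\int\lambda\,dt$, Gronwall's inequality gives a uniform $L_\infty$-bound on $y_i$. Corollary~\ref{C existence of solution} then delivers a minimizer of the relaxed problem, and through the equivalence recorded in Section~\ref{S the cost functional} this produces a Fr\'echet generalized minimizer of the original Lagrange problem.

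The main obstacle I anticipate is the initial reduction to the arc-length parameterization: without enforcing $v_i^2+|w_i|^2=1$ a.e., the relaxed constraint $v_i^2+|w_i|^2\leq 1$ permits minimizing sequences that waste arbitrarily long time intervals at $(v_i,w_i)=(0,0)$, so no a priori bound on $T_i$ is available. Once this parametric reduction is justified, the two remaining quantitative estimates follow mechanically from hypotheses (i) and (ii).
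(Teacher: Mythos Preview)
Your proposal is correct and follows the same overall architecture as the paper's proof: reduce to the arc-length normalization $v_i^2+|w_i|^2=1$ via the parameterization invariance of cost and trajectories (Propositions~\ref{P parameterization invariance of trajectories} and~\ref{P parameterization invariance of cost}), use hypothesis (i) to bound $T_i$, use hypothesis (ii) together with Gronwall to bound $\|y_i\|_{L_\infty}$, and conclude via Corollary~\ref{C existence of solution}.

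The one tactical difference worth noting concerns the bound on $T_i$. You keep the raw estimate $\lambda\geq av+b|w|$ and then split $[0,T_i]$ according to $|w_i|\lessgtr \tfrac12$, controlling one piece via $\int v_i=1$ and the other via Chebyshev on $\int|w_i|$. This works, but the paper avoids the detour by first shifting $L$ by an additive constant so that (i) becomes $L(x,u)\geq b(1+|u|)$; then $\lambda(y,v_i,w_i)\geq b(v_i+|w_i|)\geq \tfrac{b}{\sqrt 2}$ on the unit sphere, and $\widehat I(v_i,w_i,T_i)\geq \tfrac{b}{\sqrt 2}T_i$ gives the bound in one line. The same shift also absorbs the constant $\tilde a$ in (ii), streamlining the Gronwall step. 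Your route and the paper's buy the same conclusion; the paper's normalization just trades a small upfront observation for a shorter argument.
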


\begin{proof}
Adding a suitable constant to the Lagrangian $L$, we may replace the conditions (i), (ii) by
\begin{itemize}
\item[i$^\prime$)]
There is a constant $b>0$ such that
\[
L(x,u) \geq b(1+|u|) \qquad \forall (x,u) \in \mathbb R^{n+k} .
\]
\item[ii$^\prime$)]
There is a constant $ \tilde b < +\infty $ such that
\begin{align*}
&
|G(x)u| \leq \tilde b (|x-x^0||u| +  L(x,u)), \\
&|f(x)| \leq \tilde b (|x-x^0|+ L(x,u))  \qquad \forall (x,u) \in \mathbb R^{n+k} .
\end{align*}
\end{itemize}

Fix $\{ (v_i, w_i )\} $, a minimizing sequence for the problem \eqref{Eq relaxed lagrangian}--\eqref{Eq boundary conditions relaxed}.
Due to Propositions \ref{P parameterization invariance of trajectories} and \ref{P parameterization invariance of cost},
$
( v_i,  w_i )\circ \ell_{( V_i, W_i )}^\# $
is also a minimizing sequence.
Thus, we can assume that
\begin{equation}
\label{Eq control standartization}
v_i(t)^2 + |w_i(t)|^2 =1 \qquad \text{a.e. } t\geq 0, \ \forall i \in \mathbb N .
\end{equation}
In that case, the condition i$^\prime$) guarantees  that $\lambda (y,v_i,w_i) \geq \frac{b}{\sqrt{2}}\sqrt{v_i^2+|w_i|^2} = \frac{b}{\sqrt{2}}$. Hence $\hat I(v_i,w_i,T_i) \geq \frac{b}{\sqrt{2}} T_i$ and therefore the infimum of the problem is finite and the sequence $\{ T_i \}$ is bounded.

From the condition (ii$^\prime$), we get
\begin{align*}
&
|y_{(v_i,w_i)}(t)-x^0| \leq
\int_0^t | f(y_{(v_i,w_i)}) |v_i + |G(y_{(v_i,w_i)})w_i| d \tau \leq \\
\leq &
2 \int_0^t \tilde b |y_{(v_i,w_i)}-x^0| + \tilde b \lambda(y_{(v_i,w_i)},v_i,w_i)  d \tau
\leq
2\tilde b \hat I(v_i,w_i,T_i)+ 2\tilde b \int_0^t |y_{(v_i,w_i)}-x^0| d \tau ,
\end{align*}
and by  Gronwall's Lemma, the sequence $\{\|y_{(v_i,w_i)}\|_{L_\infty[0,T_i]} \}$  is bounded.
\end{proof}

\section{Lavrentiev gap for ordinary and generalized controls}
\label{S Lavrentiev phenomenon}

We briefly discuss, what we call Lavrentiev gap for the classes of ordinary and Fr\'echet generalized controls.

We say that the functional $I$ exhibits an $L_1^k[0,1]$-$\mathcal F_{k,1}^0$ {\it Lavrentiev(-type) gap}, if
\[
\inf\limits_{u \in L_1^k[0,1]} I\left( [(t,U)] \right) >
\inf\limits_{[(V,W)] \in \mathcal F_{k,1}^0} I \left( [(V,W)] \right) .
\]
This definition is complete only after we specify how to deal with the boundary conditions \eqref{boundary conditions}. One possibility is to consider approximations of generalized controls by ordinary controls that satisfy exactly the boundary conditions.
That is, to take the infima over the $u \in L_1^k[0,1]$ satisfying \eqref{boundary conditions} and over the $[(V,W)] \in \mathcal F_{k,1}^0$ satisfying \eqref{Eq generalized boundary conditions}.
In alternative, we may consider approximations of generalized controls by ordinary controls that satisfy \emph{approximately} the boundary conditions.

We adopt this last point of view, which leads to the
\begin{definition}\label{D Lavrentiev gap}
The functional $I$ exhibits an $L_1^k[0,1]$-$\mathcal F_{k,1}^0$ {\it Lavrentiev gap}, if
\[
\lim_{\varepsilon \rightarrow 0^+}
\inf_{\scriptsize \begin{array}{c}
u \in L_1^k[0,1] \\ |x_u (1)- x^1| \leq \varepsilon \end{array}} I\left( [(t,U)] \right) >
\inf_{\scriptsize \begin{array}{c}
[(V,W)] \in \mathcal F_{k,1}^0 \\ y_{(v,w)} \circ V^\#(1) = x^1 \end{array}} I \left( [(V,W)] \right) . \ \square
\]
\end{definition}

The original Lavrentiev phenomenon has been studied in the classical problem of the calculus of variations, where simple examples with a $W_{1,\infty}$-$W_{1,1}$ gap are known \cite{BallMizel,Mania}.
Some generalizations can be found in \cite{Sa97}.
Therefore, the occurrence of a $L_1^k[0,1]$-$\mathcal F_{k,1}^0$ gap is not surprising;  the following example shows that such gap is a real possibility

\begin{example}
\label{Ex Lavrentiev gap 1}
Consider the optimal control problem
\begin{align*}
&
J(u) = \int_0^1|x_1(t)|+ h\left(x_1(t),u(t) \right) dt \rightarrow \min, \\
& \dot x_1 = x_1+x_2, \quad \dot x_2 = u, \quad x(0) = (0,-1), \quad x(1) =(0,0) ,
\end{align*}
with
\[
h(x_1,u) = \left\{ \begin{array}{ll}
\max \left(|u| - \frac{1}{\sqrt{|x_1|}}, 0 \right)  & \text{for } x_1 \neq 0 ,
\smallskip \\
0  & \text{for } x_1 = 0 .
\end{array} \right.
\]
Note that the integrand $|x_1|+h(x_1,u)$ is a continuous function.
The problem is equivalent to
\begin{align}
&
I(v,w) = \int_0^T |y_1|v+h\left(y_1,\frac{w}{v}\right) v dt \rightarrow \min , \ T \ \mbox{- free},\label{Eq Ex Lavrentiev functional}
\\ &
\dot y_1 = (y_1+y_2)v, \quad \dot y_2 = w, \quad \dot{V}=v, \quad v\geq 0, \ v^2+w^2 = 1, \label{Eq Ex Lavrentiev constraints}
\\ &
y_1(0) = 0, \ y_2(0)=-1, \quad V(0)=0,  \quad V(T)=1, \quad y_1(T) = y_2(T)=0 .\label{Eq Ex Lavrentiev boundary cond}
\end{align}
The control $(\hat v, \hat w ) = (0,1) \chi_{[0,1]} + (1,0) \chi_{]1,+\infty[} $, $T=2$, satisfies the boundary condition and $I(\hat v, \hat w)=0$. Thus, it is optimal. It corresponds to a generalized control containing an impulse which is optimal for the initial problem.

We will show that for the problem \eqref{Eq Ex Lavrentiev functional}--\eqref{Eq Ex Lavrentiev boundary cond} there is a constant $C>0$ such that $I(v,w)\geq C$ whenever $v(t)>0$ almost everywhere,$V(T)=1$ and $|y_2(T)|$ is sufficiently small.  i.e. whenever a control in the original problem is ordinary and generates a trajectory with endpoint in a neighbourhood of the boundary condition $x(1)=(0,0)$.

Fix an arbitrary triple $(v,w,T)$ with $v(t)>0$ and $v(t)^2+w(t)^2 =1 $ for a.e. $t \geq 0$, such that $V(T)=1$ and $y_2(T)>-\frac 1 2$.

Let $ T_1= \min\left\{ t \in [0,T]: y_2(t) = -\frac 1 2 \right\}$, hence $-1 \leq y_2(t) \leq -1/2$ on $[0,T_1]$.  Given that $|\dot{y}_2(t)|=|w(t)|<1 $ we conclude $T_1>1/2$.

Then for $t \in [0,T_1]$:
\begin{equation}\label{Eq_Lavrentiev_dynam_y1}
y_1(t)=\int_0^t e^{\int_s^tv(\tau)d\tau}v(s)y_2(s)ds<0,
\end{equation}
and $ \dot{y}_1(t)=v(t)(y_1(t)+y_2(t))<0$. Hence
$
\dot y_1=v(t) (y_1(t)+y_2(t) )\leq v(t) y_2(t)$, and
\begin{align}
\label{Z013}
| y_1(t) | = &
-y_1(t) \geq
\int_0^t-y_2(t)v(t)dt \geq
\frac 1 2 \int_0^tv(t)dt=\frac 1 2 V(t) \qquad \forall t \in [0,T_1].
\end{align}
Then
\[
\int_0^T|y_1(t)|v(t)dt \geq
\int_0^{T_1} \frac 1 2 V(t)v(t)dt=\frac 1 4 (V(T_1))^2 ,
\]
and from
\eqref{Eq Ex Lavrentiev functional}
\[
I(v,w) \geq  \frac 1 4 (V(T_1))^2 + \int_0^{T_1}\left(|w(t)|-\sqrt{\frac{2}{V(t)}}v(t)\right)dt \geq \frac 1 4 (V(T_1))^2 +\frac 1 2 -\sqrt{\frac{V(T_1)}{2}};
\]
one notes that $\int_0^{T_1}|w(t)|dt \geq |y_2(T_1)-y_2(0)|=\frac 1 2$.

Given that $V(T_1) \in [0,1]$ we conclude that
\[
I(v,w) \geq \min_{z \in [0,1]}\left(\frac 1 4 z^2 +\frac 1 2 - \sqrt{\frac z 2}\right)=\frac 1 2 - \frac{3}{2^{8/3}} \geq 0.0275 . \ \square
\]
\end{example}

Now, we present some conditions that exclude a $L_1^k[0,1]$-$\mathcal F_{k,1}^0$ Lavrentiev gap.

\begin{proposition}
\label{P no gap continuous}
If the auxiliary Lagrangian $\lambda $ is continuous in  $\mathbb R^n \times B^+_k$, (see \eqref{Eq control relaxed constraints}), then the problem \eqref{lag}--\eqref{boundary conditions} does not exhibit Lavrentiev gap. $\square$
\end{proposition}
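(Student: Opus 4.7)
To preclude a Lavrentiev gap, I must establish the inequality opposite to the strict one in Definition \ref{D Lavrentiev gap}. It suffices to show that for each admissible generalized control $[(\bar V, \bar W)] \in \mathcal F_{k,1}^0$ satisfying $y_{(\bar v, \bar w)} \circ \bar V^\#(1) = x^1$ and $I([(\bar V, \bar W)]) < +\infty$, there is a sequence of ordinary controls $\{u_n\} \subset L_1^k[0,1]$ with $x_{u_n}(1) \to x^1$ and $I([(t, U_n)]) \to I([(\bar V, \bar W)])$; taking the infimum over $[(\bar V, \bar W)]$ then yields
\[
\lim_{\varepsilon \rightarrow 0^+}
\inf_{\scriptsize \begin{array}{c}
u \in L_1^k[0,1] \\ |x_u(1) - x^1| \leq \varepsilon \end{array}} I\left( [(t,U)] \right) \leq
\inf_{\scriptsize \begin{array}{c}
[(V,W)] \in \mathcal F_{k,1}^0 \\ y_{(v,w)} \circ V^\#(1) = x^1 \end{array}} I \left( [(V,W)] \right) .
\]

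\textbf{Approximation and convergence of endpoints.} I invoke Proposition \ref{P density of ordinary controls} to pick $\{u_n\}$ with $[(t, U_n)] \to [(\bar V, \bar W)]$ in $d_1^+$. Theorem \ref{T extension of input-to trajectory map} delivers convergence of the corresponding generalized trajectories $[(V_n, y_{(v_n, w_n)})] \to [(\bar V, y_{(\bar v, \bar w)})]$ in $(\mathcal F_{n,1}, d_1^+)$, where $(V_n, W_n)$ and $(\bar V, \bar W)$ are the canonical representatives. Since $d_1^+$ controls length information and canonical parameterizations have unit speed, $V_n^\#(1) = \ell_{(V_n, W_n)}(V_n^\#(1)) \to \bar V^\#(1)$. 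The convergence of the trajectories in $d_1^+$ then forces the endpoints $x_{u_n}(1) = y_{(v_n,w_n)} \circ V_n^\#(1)$ to converge to $y_{(\bar v, \bar w)} \circ \bar V^\#(1) = x^1$, verifying the approximate boundary condition for large $n$.

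\textbf{Cost convergence via continuity of $\lambda$.} The canonical representatives satisfy $v_n^2 + |w_n|^2 = \bar v^2 + |\bar w|^2 = 1$ a.e., so all controls take values in the compact set $K = \{(v,w) : v \geq 0,\ v^2 + |w|^2 = 1\} \subset B_k^+$. By Theorem \ref{T continuity Frechet to W}, $(v_n, w_n) \to (\bar v, \bar w)$ in $L_1$ on a common compact interval $[0, T^*]$ containing all supports, so along a subsequence the convergence is pointwise a.e.; by Proposition \ref{P continuity Frechet-to-length}, $y_{(v_n, w_n)} \to y_{(\bar v, \bar w)}$ uniformly on $[0, T^*]$. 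The hypothesised continuity of $\lambda$ on $\mathbb R^n \times B_k^+$ then gives a.e.\ convergence of the integrands $\lambda(y_{(v_n, w_n)}, v_n, w_n)$, while continuity on the compact product of the (bounded) range of $y$-values with $K$ yields a uniform $L_\infty$ bound. Combined with $V_n^\#(1) \to \bar V^\#(1)$, dominated convergence applied to $\chi_{[0, V_n^\#(1)]}(t)\, \lambda(y_{(v_n, w_n)}, v_n, w_n)(t)$ delivers $I([(t, U_n)]) \to I([(\bar V, \bar W)])$ along the subsequence; a standard subsequence-of-subsequence argument extends this to the full sequence. The main technical delicacy is coordinating the moving integration endpoint $V_n^\#(1)$ with the $L_1$-convergence of the controls, which is precisely the role the strengthened Fr\'echet metric $d_1^+$ is engineered to play, simultaneously tracking curve shape and total length.
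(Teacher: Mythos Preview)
Your argument is correct but follows a different route from the paper's. The paper constructs an explicit one-parameter family of approximating ordinary controls directly from the canonical representative $(V,W)$: setting
\[
(V_\varepsilon(t), W_\varepsilon(t)) = \left( V\!\left(\tfrac{t}{1+\varepsilon}\right) + \tfrac{\varepsilon t}{1+\varepsilon},\; W\!\left(\tfrac{t}{1+\varepsilon}\right) \right),
\]
the added linear term forces $\dot V_\varepsilon \geq \tfrac{\varepsilon}{1+\varepsilon} > 0$, so each $[(V_\varepsilon,W_\varepsilon)]$ is an ordinary control; continuity of $\lambda$ together with dominated convergence then gives $\int_0^{T_\varepsilon} \lambda(y_{(v_\varepsilon,w_\varepsilon)}, v_\varepsilon, w_\varepsilon)\,dt \to \int_0^T \lambda(y_{(v,w)}, v, w)\,dt$. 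You instead invoke the abstract density result (Proposition~\ref{P density of ordinary controls}) to obtain an unspecified approximating sequence, and then rely on the continuity machinery of Theorems~\ref{T continuity Frechet to W} and~\ref{T extension of input-to trajectory map} to control trajectories and endpoints, with continuity of $\lambda$ handling the cost. The paper's approach is shorter and more self-contained---it needs only the explicit perturbation and dominated convergence, bypassing the canonical-selector continuity theorems---while your approach is more conceptual, making transparent that absence of a gap follows immediately from the framework's continuity properties once $\lambda$ is continuous. Both work; the paper's saves effort, yours better advertises the role of the $d_1^+$ topology.
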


\begin{proof}
Pick a generalized control $[(V,W)]$ with canonical element $(V,W)$, and $T \in ]0,+\infty[$, satisfying the boundary condition $y_{(v,w)}(T) =x^1$, $V(T)=1$.

For each $\varepsilon >0$, let
\[
\left( V_\varepsilon(t), W_\varepsilon(t) \right) =
\left( V(\frac{t}{1+\varepsilon}) + \frac{\varepsilon t}{1 + \varepsilon }, W(\frac{t}{1+\varepsilon}) \right)
, \qquad t \geq 0 ,
\]
and let $T_\varepsilon $ be the unique $t$ solving $ V_\varepsilon(t) = 1$.
Then, $\left[ (V_\varepsilon, W_\varepsilon) \right] $ is an ordinary control and the Lebesgue's dominated convergence theorem guarantees that
\[
\lim_{\varepsilon \rightarrow 0^+} \int_0^{T_\varepsilon} \lambda \left( y_{(v_\varepsilon,w_\varepsilon)}, v_\varepsilon, w_\varepsilon \right) dt =
\int_0^T \lambda \left( y_{(v,w)}, v, w \right) dt .
\]
\end{proof}

The following example shows that there are problems in which the Fr\'echet generalized minimizer contain jumps along discontinuities of the auxiliary Lagrangian and yet have no Lavrentiev gap. Thus, continuity of the auxiliary Lagrangian is \emph{not} a necessary condition to exclude existence of gap.

\begin{example}
\label{Ex Lavrentiev gap 2}
Consider the optimal control problem
\begin{align*}
&
J(u) = \int_0^1|x_1(t)|^\alpha u(t)^2 dt \rightarrow \min, \\
& \dot x_1 = x_1+x_2, \quad \dot x_2 = u, \quad x(0) = (0,-1), \quad x(1) =(0,0) ,
\end{align*}
with $\alpha >0$ constant.

The auxiliary Lagrangian is
\[
\lambda(y_1,y_2,v,w) = \lambda(y_1,v,w) = \left\{\begin{array}{ll}
|y_1|^\alpha \frac{w^2}{v}, & \text{if } v \neq 0, \\
0 , & \text{if } w=0 \ \text{or } y_1=0,\\
+ \infty , & \text{if } v=0, \ y_1 \neq 0, \ w \neq 0 .
\end{array} \right.
\]
Clearly, it is discontinuous at the points $(0,0,w)$, $w \in \mathbb R$, for every positive $\alpha$.
The auxiliary problem is
\begin{align*}
&
I(v,w) = \int_0^{T} \lambda (y_1,v,w) dt \rightarrow \min ,
\\ &
\dot y_1 = (y_1+y_2)v, \quad \dot y_2 = w, \quad v\geq 0, \ v^2+w^2 = 1,
\\ &
y(0) = (0,-1), \quad V(T)=1, \quad y(T) = (0,0) .
\end{align*}
The control $(\hat v, \hat w ) = (0,1) \chi_{[0,1]} + (1,0) \chi_{]1,+\infty[} $ satisfies the boundary condition with $T=2$ and $I(\hat v, \hat w)=0$. Thus, it is optimal. It corresponds to a Fr\'echet generalized control with an impulse at $t=0$.

Now, consider the approximation of the generalized minimizer by ordinary controls corresponding to $(v_\eta, \hat w ) = (\eta,1) \chi_{[0,1]} + (1,0) \chi_{]1,+\infty[} $.
A simple computation shows that $V_\eta(2-\eta)=1$, $y_{(v_\eta, \hat w)}(2-\eta) = O(\eta)$ and $I(v_\eta, \hat w) = O(\eta^{\alpha -1})$.
Thus, the problem has no Lavrentiev gap when $\alpha >1$.

The argument breaks down for $\alpha \leq 1$. Indeed, it can be shown that the problem has a Lavrentiev gap when $\alpha \in ]0,1[$. $\square$
\end{example}

The Proposition \ref{P no gap continuous} has the following immediate corollary.

\begin{corollary}
\label{C no gap homogeneous}
Suppose that the Lagrangian can be written as
\[
L(x,u) = L_1(x)+L_2(x,u), \qquad \forall (x,u) \in \mathbb R^{n+k} ,
\]
with $u \mapsto L_2(x,u)$ positively homogeneous of degree $1$ for every $x \in \mathbb R^n$.
Then, the problem \eqref{lag}--\eqref{boundary conditions} has no Lavrentiev gap in the sense of Definition \ref{D Lavrentiev gap}. $\square$
\end{corollary}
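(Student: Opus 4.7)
The plan is to reduce to Proposition \ref{P no gap continuous} by verifying that the splitting hypothesis plus positive $1$-homogeneity of $L_2(x,\cdot)$ forces the auxiliary Lagrangian $\lambda$ to be (jointly) continuous on $\mathbb R^n \times B_k^+$. Once continuity of $\lambda$ is in hand, the conclusion follows immediately from Proposition \ref{P no gap continuous}.

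First I would record two preliminary facts. Since $L_2(x,\cdot)$ is positively homogeneous of degree $1$, one has $L_2(x,0)=0$, and consequently $L_1(x)=L(x,0)$. Because $L$ is continuous on $\mathbb R^{n+k}$ (a standing assumption), $L_1$ is continuous on $\mathbb R^n$ and therefore $L_2=L-L_1$ is continuous on $\mathbb R^{n+k}$. Moreover, from the convexity of $L(x,\cdot)$, positive $1$-homogeneity of $L_2(x,\cdot)$, and the identity $L_2=L-L_1$, one obtains that $L_2(x,\cdot)$ is sublinear, hence in particular continuous at $u=0$ with value $0$.

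Next I would compute $\lambda$ directly from its definition \eqref{lambda_extended}. For $v>0$,
\begin{equation*}
\lambda(y,v,w)=L\!\left(y,\tfrac{w}{v}\right)v
= L_1(y)\,v+v\,L_2\!\left(y,\tfrac{w}{v}\right)
= L_1(y)\,v+L_2(y,w),
\end{equation*}
where the last equality uses positive $1$-homogeneity of $L_2(y,\cdot)$ applied with scaling factor $v>0$. For $v=0$, the definition gives
\begin{equation*}
\lambda(y,0,w)=\lim_{\eta\to 0^+}L\!\left(y,\tfrac{w}{\eta}\right)\eta
=\lim_{\eta\to 0^+}\bigl[L_1(y)\,\eta+L_2(y,w)\bigr]=L_2(y,w).
\end{equation*}
Thus $\lambda(y,v,w)=L_1(y)\,v+L_2(y,w)$ on the whole of $\mathbb R^n\times B_k^+$, a single continuous formula valid at $v=0$ as well. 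Continuity of $\lambda$ on $\mathbb R^n \times B_k^+$ then follows at once from continuity of $L_1$ and $L_2$.

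With this continuity established, Proposition \ref{P no gap continuous} applies verbatim and yields absence of the $L_1^k[0,1]$-$\mathcal F_{k,1}^0$ Lavrentiev gap, completing the proof. The only point that requires any care is the boundary check at $v=0$: one must invoke the definition of $\lambda$ via the limit $\eta\to 0^+$ rather than naive substitution, and it is precisely the cancellation $v\cdot L_2(y,w/v)=L_2(y,w)$ provided by homogeneity of degree exactly one that makes this limit finite and continuous — any other degree of homogeneity would fail, as illustrated by Example \ref{Ex Lavrentiev gap 2} with $\alpha\in ]0,1[$.
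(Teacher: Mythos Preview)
Your proof is correct and follows essentially the same route as the paper: you compute $\lambda(y,v,w)=L_1(y)v+L_2(y,w)$ from positive $1$-homogeneity of $L_2(x,\cdot)$, observe this is continuous, and invoke Proposition~\ref{P no gap continuous}. The paper's proof is the one-line version of exactly this; your additional care in justifying continuity of $L_1$ and $L_2$ separately and in checking the $v=0$ limit explicitly is sound but not strictly needed beyond what the paper records.
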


\begin{proof}
If the assumption holds, then
$
\lambda(y,v,w) = L_1(y)v+L_2(y,w)
$.
\end{proof}

\begin{remark}
\label{Rmk 3}
Linear growth of the Lagrangian with respect to control does not guarantee lack of Lavrentiev gap.

To see this, consider the same dynamics and boundary conditions as in Example \ref{Ex Lavrentiev gap 1}, and introduce the modified functional
\[
\tilde J(u) = \int_0^1|x_1(t)|+ h\left(x_1(t),u(t) \right) + \varepsilon |u(t)| dt,
\]
with $\varepsilon>0$, a small constant.
Existence of generalized minimizer is guaranteed by Proposition \ref{P existence linear growth}.

The inequality $\tilde J(u) \geq C$ holds for every ordinary control satisfying the boundary condition. However, for the generalized minimizer given in Example \ref{Ex Lavrentiev gap 1}, we have $\tilde I(\hat v, \hat w) = \varepsilon $, and therefore $\inf\limits_{[(v,w)] \in \mathcal F_{1,1}^0} \tilde I \left( [(v,w)] \right) < \inf\limits_{u \in L_1[0,1]} \tilde J(u)$ for sufficiently small $\varepsilon>0$.
$\square$
\end{remark}

We conclude this section with two further cases where Lavrentiev gap cannot occur.

\begin{proposition}
\label{Rm no Lavrenteev}
If $L(x,u) = L_1(x) + L_2(u)$ with $L_1$ continuous and $L_2$ convex, then the optimal control problem does not have a Lavrenteev gap. $\square$
\end{proposition}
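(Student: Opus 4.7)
The plan is to run the time-dilation construction used in the proof of Proposition \ref{P no gap continuous} and replace the appeal to global continuity of $\lambda$ by a convexity estimate that tames the only place where $\lambda$ can be discontinuous -- the perspective function associated with $L_2$ at $v=0$. First I would split $\lambda(y,v,w) = L_1(y)\,v + \Lambda(v,w)$, where $\Lambda(v,w) = v L_2(w/v)$ for $v>0$ and $\Lambda(0,w) = \lim_{v\to 0^+} v L_2(w/v) = L_2^{\infty}(w)$ is its (well-defined) lower semicontinuous envelope at $v=0$, whose existence follows from convexity of $L_2$. The summand $L_1(y)v$ is jointly continuous in $(y,v)$ by the assumed continuity of $L_1$ and will present no difficulty; all the work lies in controlling $\Lambda(v+\varepsilon,w)$ as $\varepsilon\to 0^+$.

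Given a generalized control $[(V,W)]\in\mathcal F^0_{k,1}$ with canonical representative $(V,W)$, $V(T)=1$, $y_{(v,w)}(T)=x^1$, and finite cost (the only nontrivial case), I would approximate it, exactly as in Proposition \ref{P no gap continuous}, by
\[
(V_\varepsilon,W_\varepsilon)(t) = \left( V\!\left(\tfrac{t}{1+\varepsilon}\right) + \tfrac{\varepsilon t}{1+\varepsilon},\; W\!\left(\tfrac{t}{1+\varepsilon}\right) \right),
\]
which forces $v_\varepsilon \geq \varepsilon/(1+\varepsilon)>0$ a.e., so that $[(V_\varepsilon,W_\varepsilon)]$ is an ordinary control. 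Standard continuous dependence applied to the reduced system \eqref{Eq reduced system} gives $y_\varepsilon\to y_{(v,w)}$ uniformly on $[0,T]$, and the change of variables $s=t/(1+\varepsilon)$ combined with the positive $1$-homogeneity of $\lambda$ in $(v,w)$ rewrites the approximating cost as
\[
\int_0^{S_\varepsilon}\!\! \Big[L_1(y_\varepsilon(s))\,(v(s)+\varepsilon) + \Lambda\!\left(v(s)+\varepsilon,\,w(s)\right)\Big]\,ds,
\]
where $S_\varepsilon = T_\varepsilon/(1+\varepsilon) \to T$.

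The key new ingredient is a quantitative upper bound for the perspective. For $v>0$, the identity $\tfrac{w}{v+\varepsilon} = \tfrac{v}{v+\varepsilon}\cdot\tfrac{w}{v} + \tfrac{\varepsilon}{v+\varepsilon}\cdot 0$ and convexity of $L_2$ yield
\[
\Lambda(v+\varepsilon,w) = (v+\varepsilon)L_2\!\left(\tfrac{w}{v+\varepsilon}\right) \leq v L_2(w/v) + \varepsilon L_2(0) = \Lambda(v,w) + \varepsilon L_2(0),
\]
and letting $v\to 0^+$ extends the inequality to $v=0$. Hence, for $\varepsilon\in(0,1]$, the integrand is dominated pointwise by the integrable function $C(1+v(s)) + \Lambda(v(s),w(s)) + L_2(0)$ -- integrability on $[0,T]$ being exactly the assumption $I([(V,W)])<+\infty$. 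Simultaneously $\Lambda(v+\varepsilon,w)\to\Lambda(v,w)$ pointwise: trivially on $\{v>0\}$ by continuity of the perspective, and on $\{v=0\}$ by the very definition of $\Lambda(0,\cdot)$ as the limit. Dominated convergence, together with $L_1(y_\varepsilon(s))\to L_1(y_{(v,w)}(s))$ uniformly and $S_\varepsilon\to T$, then yields convergence of the approximating cost to $I([(V,W)])$. Since $y_\varepsilon(T_\varepsilon)\to x^1$ follows as well from uniform convergence, the ordinary controls $[(V_\varepsilon,W_\varepsilon)]$ $\varepsilon$-satisfy the boundary condition in the sense of Definition \ref{D Lavrentiev gap}, and taking infimum over $[(V,W)]$ completes the argument.

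The step I expect to be the real obstacle is the boundary case $v=0$ in the convexity estimate, where the convex-combination trick collapses; the clean fix is to establish the bound on $\{v>0\}$ and pass to the limit $v\to 0^+$, exploiting that the right-hand side is continuous in $v$ at $0$ by the definition of $\Lambda(0,\cdot)$. A secondary technical point is the precise behaviour of $T_\varepsilon$ and $S_\varepsilon$ when $V$ is constant close to the value $1$, but this can be handled either by choosing a representative of $[(V,W)]$ that is strictly increasing near $t=T$, or by noting that the tail of the integrand contributes only $O(\varepsilon)$ and can be absorbed into the dominating function.
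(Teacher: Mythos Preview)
Your proposal is correct and follows essentially the same route as the paper: replace $v$ by $v+\varepsilon$, use convexity of $L_2$ to obtain the pointwise bound $(v+\varepsilon)L_2\!\left(\tfrac{w}{v+\varepsilon}\right) \leq v L_2(w/v) + \varepsilon L_2(0)$, combine with uniform convergence $y_{(v+\varepsilon,w)}\to y_{(v,w)}$ to control the $L_1$ term, and apply dominated convergence. Your write-up is considerably more careful than the paper's very terse proof (in particular you spell out the time-dilation reparametrization, the passage to the limit at $v=0$, and the behaviour of $T_\varepsilon$), but the underlying argument is the same.
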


\begin{proof}
Since $y_{(v+\eta, w)} \rightarrow y_{(v,w)}$ uniformly when $\eta \rightarrow 0^+$, and
\begin{align*}
&\left( L_1(y_{(v+\eta,w)}) - L_2\left( \frac w{v+\eta} \right) \right) (v+\eta ) \leq
\\ \leq &
\lambda (y_{(v,w)},v,w) + \left( L_1(y_{(v+\eta,w)}) - L_1(y_{(v,w)}) \right) (v+\eta ) + L_2(0) \eta,
\end{align*}
The result follows from Lebesgue's dominated convergence theorem.
\end{proof}

\begin{proposition}
\label{P no gap no drift}
If $f\equiv 0$ (i.e., the system \eqref{affine system} has no drift), then the problem \eqref{lag}--\eqref{boundary conditions} has no Lavrentiev gap in the sense of Definition \ref{D Lavrentiev gap}. $\square$
\end{proposition}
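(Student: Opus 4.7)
The plan is to prove the stronger statement that, when $f\equiv 0$, every admissible generalized control can be approximated by ordinary controls satisfying the terminal condition $x^1$ \emph{exactly} and whose costs converge. Absence of Lavrentiev gap in the sense of Definition \ref{D Lavrentiev gap} then follows immediately. The key input is that with $f\equiv 0$ the reduced dynamics \eqref{Eq reduced system y} become $\dot y=G(y)w$, depending only on the $w$-component, so one can perturb the "time component" $V$ of a generalized control without altering $y_{(v,w)}$ at all.

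Concretely, given an admissible $[(V,W)]\in\mathcal F_{k,1}^0$ with $I([(V,W)])<\infty$ and canonical parameterization $(V,W)$ on $[0,T]$ (so $V(T)=1$, $v^2+|w|^2=1$ a.e., $y_{(v,w)}(T)=x^1$), I would define for $\varepsilon\in(0,1/T)$
\[
V_\varepsilon(t)=(1-\varepsilon T)V(t)+\varepsilon t,\qquad W_\varepsilon(t)=W(t),\qquad t\in[0,T].
\]
Then $V_\varepsilon(0)=0$, $V_\varepsilon(T)=1$, and $\dot V_\varepsilon\ge\varepsilon>0$, so by Section~\ref{SS Generalized controls} $[(V_\varepsilon,W_\varepsilon)]$ corresponds to an ordinary control $u_\varepsilon$. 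Because $f\equiv 0$ and $W_\varepsilon=W$, the trajectory $y_\varepsilon$ solves exactly the same Cauchy problem as $y_{(v,w)}$, so $y_\varepsilon\equiv y_{(v,w)}$ on $[0,T]$; in particular $x_{u_\varepsilon}(1)=x^1$ exactly. Uniform convergence $V_\varepsilon\to V$ and a dominated-convergence argument for the lengths give $[(V_\varepsilon,W_\varepsilon)]\to[(V,W)]$ in $d_1^+$.

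For the cost, set $v_\varepsilon=(1-\varepsilon T)v+\varepsilon$. Convexity of $L(y,\cdot)$ applied to the identity $w/v_\varepsilon=\tfrac{(1-\varepsilon T)v}{v_\varepsilon}(w/v)+\tfrac{\varepsilon}{v_\varepsilon}\cdot 0$ on $\{v>0\}$, and the $\eta\to 0^+$ limiting argument of the proof of Proposition~\ref{P cost lower semicontinuity} on $\{v=0\}$, together yield the uniform pointwise bound
\[
L\!\left(y_{(v,w)},\tfrac{w}{v_\varepsilon}\right)v_\varepsilon \;\le\; (1-\varepsilon T)\,\lambda(y_{(v,w)},v,w)+\varepsilon\, L(y_{(v,w)},0).
\]
Since $y_{(v,w)}([0,T])$ is compact, $L(y_{(v,w)}(\cdot),0)$ is bounded, and the integrability of $\lambda(y_{(v,w)},v,w)$ (guaranteed by $I([(V,W)])<\infty$ and the lower bound $\lambda\ge a$) produces an integrable majorant. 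Pointwise in $t$, the left-hand side converges to $\lambda(y_{(v,w)}(t),v(t),w(t))$: by continuity of $L$ where $v(t)>0$, and by the very definition \eqref{lambda_extended} of $\lambda$ where $v(t)=0$. Dominated convergence delivers $\limsup_\varepsilon I([(V_\varepsilon,W_\varepsilon)])\le I([(V,W)])$, with the matching $\liminf$ supplied by the lower semicontinuity of $I$ (Proposition~\ref{P cost lower semicontinuity}). The step I expect to be the main obstacle is precisely this uniform upper bound on the impulsive set $\{v=0,\ w\neq 0\}$, where $\lambda$ is only lower semicontinuous and not continuous; convexity of $L$ in $u$ is what makes dominated convergence applicable, and the hypothesis $f\equiv 0$ is what allows the $v$-perturbation to be cleanly isolated from the $y$-dynamics in the first place.
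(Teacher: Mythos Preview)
Your approach is essentially the paper's: perturb $V$ while keeping $W$ fixed, exploit $f\equiv 0$ to conclude $y_{(v_\varepsilon,w)}\equiv y_{(v,w)}$, bound the perturbed integrand above via convexity, and apply dominated convergence. The paper uses the simpler perturbation $V+\varepsilon t$ (so the terminal condition is only met approximately, which suffices for Definition~\ref{D Lavrentiev gap}); your rescaled choice $(1-\varepsilon T)V+\varepsilon t$ hits $V_\varepsilon(T)=1$ exactly and is a harmless refinement.

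One correction is needed. Your claimed uniform bound $L(y,w/v_\varepsilon)v_\varepsilon\le(1-\varepsilon T)\lambda(y,v,w)+\varepsilon L(y,0)$ fails on $\{v=0\}$: with $L(x,u)=|u|$ you get $|w|$ on the left and $(1-\varepsilon T)|w|$ on the right. The limiting argument from the proof of Proposition~\ref{P cost lower semicontinuity} only delivers $L(y,w/\varepsilon)\varepsilon\le\lambda(y,0,w)+\varepsilon L(y,0)$ there, without the $(1-\varepsilon T)$ factor. This does not damage the proof: the weaker majorant $\lambda(y_{(v,w)},v,w)+\varepsilon L(y_{(v,w)},0)$ holds on all of $[0,T]$, is integrable by your hypotheses, and dominated convergence goes through exactly as you describe.
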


\begin{proof}
If the system \eqref{affine system} has no drift, then
$y_{(v,w)}=y_{(\tilde v, w)}$ for every $v, \tilde v, w$.
Since $L(y, \frac{w}{v+\varepsilon})(v+\varepsilon) \leq \lambda(y,v,w) + \varepsilon L(y,0)$, it follows that
\[
\lim_{\varepsilon \rightarrow 0^+} I(V+\varepsilon t,W) = I(V,W)
\]
for every generalized control $[(V,W)]$.
\end{proof}

\section{Example}
\label{S Example}

We provide an example of a Lagrange variational problem with a functional of linear growth, for which the minimum is attained at a generalized minimizer.

The set of all horizontal curves in the Heisenberg group can be identified with the set of trajectories of the control system
\begin{align}
\label{Eq system Heisenberg}
\dot x_1 = u_1, \quad \dot x_2 = u_2, \quad \dot x_3 =2x_2u_1-2x_1u_2 .
\end{align}
By adding a smooth drift $f$, one obtains a control-affine system
\begin{align}
\label{Eq system Heisenberg affine}
\left( \begin{array}{c}
\dot x_1 \\ \dot x_2 \\ \dot x_3
\end{array} \right) =
\left( \begin{array}{c}
f_1(x) \\ f_2(x) \\ f_3(x)
\end{array} \right) +
\left( \begin{array}{c}
1 \\ 0 \\ 2x_2
\end{array} \right) u_1 +
\left( \begin{array}{c}
0 \\ 1 \\ -2x_1
\end{array} \right) u_2
\end{align}
We wish to minimize the functional
\begin{align}
\label{Eq cost example}
J(u_1,u_2) = \int_0^1 \sqrt{1+u_1^2+u_2^2} \ dt,
\end{align}
under the  boundary conditions $x(0)=\overline{x}$, $x(1)=\overline{\overline{x} }$.

This problem satisfies the assumptions of Proposition \ref{P existence linear growth}, provided $f$ does not have supralinear growth with respect to $x$. Therefore, it has a generalized solution in the class $\mathcal F_{2,1}^0$.

The auxiliary Lagrangian is
\[
\lambda (y,v,w) = v\sqrt{1+\left( \frac{w_1}{v} \right)^2 +\left( \frac{w_2}{v} \right)^2} =
\sqrt{v^2+w_1^2+w_2^2} .
\]
Therefore, the Proposition \ref{P no gap continuous} guarantees that the problem \eqref{Eq system Heisenberg affine}--\eqref{Eq cost example} does not have a $L_1^2[0,1]$-$\mathcal F_{2,1}^0$ Lavrentiev gap.

The extension of the problem \eqref{Eq system Heisenberg affine}--\eqref{Eq cost example} is equivalent to the problem
\begin{align}
\label{Eq example reduced Lagrangian}
& T \rightarrow \min ,
\\ &
\label{Eq example reduced dynamics}
\left( \begin{array}{c}
\dot y_1 \\ \dot y_2 \\ \dot y_3
\end{array} \right) =
\left( \begin{array}{c}
f_1(y) \\ f_2(y) \\ f_3(y)
\end{array} \right) v +
\left( \begin{array}{c}
1 \\ 0 \\ 2y_2
\end{array} \right) w_1 +
\left( \begin{array}{c}
0 \\ 1 \\ -2y_1
\end{array} \right) w_2 , \quad \dot{V}=v,
\\ &
\label{Eq example control values}
v \geq 0, \quad v^2+w_1^2+w_2^2 =1,
\\ &
\label{Eq example boundary conditions}
y(0) = \overline{x}, \quad V(0)=0, \quad V(T)=1, \quad y (T) = \overline{ \overline{x} } .
\end{align}
Optimal controls for this problem satisfy the Pontryagin maximum principle with Hamiltonian
\begin{align*}
H=&
\left( \lambda_1 f_1 (y)+\lambda_2 f_2(y)+\lambda_3 f_3(y) + \lambda_4 \right)v +
\left( \lambda_1+\lambda_3y_2\right) w_1 +
\left( \lambda_2-\lambda_3y_1\right) w_2 .
\end{align*}
An optimal trajectory of the problem \eqref{Eq system Heisenberg affine}--\eqref{Eq cost example} exhibits a jump if there is an interval where the corresponding extremal of the problem \eqref{Eq example reduced Lagrangian}--\eqref{Eq example boundary conditions} satisfies
\[
\lambda_1 f_1 (y)+\lambda_2 f_2(y)+\lambda_3 f_3(y) + \lambda_4 \leq 0 ,
\]
and hence, by the Pontryagin maximum principle, $v(t) =0$.
Jump paths are sub-Riemannian geodesics of the Heisenberg group.

The presence or absence of jumps in optimal solutions depends on the drift vector field $f$.

\subsection{Constant drift} For example, if the drift is a constant vector field of the form $f \equiv (0,0,C)$, one can easily conclude, that all optimal trajectories are continuous.

Indeed in this case the Hamiltonian amounts  to
\[
H=
\left( \lambda_3 C + \lambda_4 \right)v +
\left( \lambda_1+\lambda_3y_2\right) w_1 +
\left( \lambda_2-\lambda_3y_1\right) w_2 ,
\]
and the extremals satisfy $\dot \lambda_3= \dot \lambda_4 \equiv 0,\ v(t)=\max\{0,\lambda_3C+\lambda_4 \}$. It follows that $\lambda_3C+\lambda_4$ is  constant and,  if the constant is positive, then  $v(\cdot)$ does not vanish and  the extremal trajectory is continuous,  or, if it is non-positive, then $v(\cdot)$ vanishes identically. The latter possibility is incompatible with the condition $\int_0^T v(s)ds=1$.

\subsection{Case of linear drift}
Contrasting with the case above, for the linear drift vector field  $f(x)=(0,0,-x_3)$ optimal trajectories may have a jump. This happens, for example, for the boundary conditions $\overline{x}=0$, $\overline{\overline{x}}=(0,0,C)$, whenever  $C>0$ is large enough. We prove that in this case the optimal trajectory consists of an analytic arc in the interval $[0,1[$ and a final jump at $t=1$.

Let us write the equations of Pontryagin Maximum Principle with the Hamiltonian
\[
H=
\left( \lambda_4 - \lambda_3y_3 \right)v +
\left( \lambda_1+2\lambda_3y_2\right) w_1 +
\left( \lambda_2-2\lambda_3y_1\right) w_2 .
\]
The adjoint vector satisfies the system
\begin{equation}
\label{Eq dynamic adjoint vector}
\dot \lambda_1 = 2 \lambda_3 w_2, \quad
\dot \lambda_2 = -2 \lambda_3 w_1, \quad
\dot \lambda_3 = \lambda_3 v, \quad
\dot \lambda_4 = 0.
\end{equation}
As far as Hamiltonian $H$ is homogeneous, we  may consider the abnormal case  $H\equiv 0$ and the normal one: $H\equiv 1$.

\begin{lemma}
Abnormal extremals for this problem are trivial: $y(t) \equiv 0. \ \square$
\end{lemma}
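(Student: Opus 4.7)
The plan is to exploit the identity $H\equiv 0$ pointwise together with the structure of the adjoint system \eqref{Eq dynamic adjoint vector} to force $y(t)\equiv 0$. Write $a:=\lambda_4-\lambda_3 y_3$, $b_1:=\lambda_1+2\lambda_3 y_2$, $b_2:=\lambda_2-2\lambda_3 y_1$, so that $H=av+b_1w_1+b_2w_2$. A direct computation shows that the maximum of $H$ over the half-ball $B^+=\{v\geq 0,\, v^2+w_1^2+w_2^2\leq 1\}$ equals $\sqrt{\max(0,a)^2+b_1^2+b_2^2}$. The abnormal condition $H\equiv 0$ therefore forces
\[
a(t)\leq 0,\qquad b_1(t)=b_2(t)=0\quad\text{for a.e. } t.
\]

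First I would rule out $\lambda_3\equiv 0$. In that case the adjoint system makes $\lambda_1,\lambda_2,\lambda_4$ constants; the identities $b_1=b_2\equiv 0$ force $\lambda_1=\lambda_2=0$; nontriviality of the multiplier gives $\lambda_4\neq 0$; and $a=\lambda_4\leq 0$ yields $\lambda_4<0$. But then any maximizer of $H$ satisfies $v=0$, contradicting $V(T)=\int_0^Tv\,dt=1$. Hence $\lambda_3\not\equiv 0$, and since $\dot\lambda_3=\lambda_3 v$ is linear homogeneous, $\lambda_3(t)\neq 0$ for every $t$.

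The core step is then to differentiate the algebraic identities $b_1\equiv 0$ and $b_2\equiv 0$ along the trajectory. Using $\dot\lambda_1=2\lambda_3w_2$, $\dot y_2=w_2$, and $\dot\lambda_3=\lambda_3v$, differentiation of $\lambda_1=-2\lambda_3y_2$ yields $2\lambda_3 w_2 = -2\lambda_3 v\,y_2 - 2\lambda_3 w_2$, hence $w_2=-vy_2/2$. Symmetrically $w_1=-vy_1/2$. Substituting into the state equations gives the linear homogeneous system $\dot y_1=-\tfrac{1}{2}v\,y_1$, $\dot y_2=-\tfrac{1}{2}v\,y_2$, which together with the initial condition $y_1(0)=y_2(0)=0$ forces $y_1\equiv y_2\equiv 0$, and therefore $w_1\equiv w_2\equiv 0$. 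The third state equation then reduces to $\dot y_3=-y_3v$ with $y_3(0)=0$, giving $y_3\equiv 0$.

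The main obstacle I anticipate is that $\max H=0$ leaves the maximizing control non-unique on the set $\{a<0\}$, where any $(0,w_1,w_2)$ in the closed unit disk achieves the maximum, so the derivation above must be repeated with $v=0$. There the same differentiation of $b_1=b_2=0$, now using $\dot\lambda_3=0$ and $\dot y_i=w_i$, yields $4\lambda_3 w_i=0$ and hence $w_1=w_2=0$ as well, so the trajectory remains at the origin on such intervals. Concatenating this with the analysis of the $\{v>0\}$ regime gives $y\equiv 0$ globally, completing the proof.
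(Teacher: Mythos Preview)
Your proof is correct and follows the same overall strategy as the paper: deduce $b_1=b_2\equiv 0$ and $a\leq 0$ from $\max H=0$, rule out $\lambda_3\equiv 0$ via the boundary condition $V(T)=1$, and then differentiate the identities $b_i\equiv 0$ along the extremal to obtain $w_i=-\tfrac{1}{2}v y_i$.

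The only difference is in the final step. The paper substitutes $w_i=-\tfrac{1}{2}v y_i$ into the sphere constraint $v^2+w_1^2+w_2^2=1$ to express $v=\tfrac{2}{\sqrt{4+y_1^2+y_2^2}}$, derives the autonomous scalar ODE $\dot v=\tfrac{1}{2}v^2(1-v^2)$, and concludes $v\equiv 1$ from $v(0)=1$; this forces $w_1=w_2\equiv 0$ and hence $y\equiv 0$. Your route is more direct: substituting $w_i=-\tfrac{1}{2}v y_i$ straight into $\dot y_i=w_i$ yields the linear homogeneous system $\dot y_i=-\tfrac{1}{2}v y_i$ with $y_i(0)=0$, and Gronwall (or the explicit exponential solution) gives $y_1\equiv y_2\equiv 0$ without ever invoking the sphere constraint. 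This is a genuine simplification.

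Your ``main obstacle'' paragraph is redundant: the relation $w_i=-\tfrac{1}{2}v y_i$ that you derived holds almost everywhere regardless of whether $v(t)>0$ or $v(t)=0$, so the linear ODE argument already covers both regimes uniformly. You can drop that paragraph without loss.
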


\begin{proof}
The identity $H \equiv 0$ implies
\begin{equation}
\label{Z016}
\lambda_1+2\lambda_3y_2 \equiv 0 , \quad
\lambda_2-2\lambda_3 y_1 \equiv 0, \quad
\lambda_4-\lambda_3y_3 \leq 0 .
\end{equation}
Differentiating the first two equalities, one obtains
\[
\lambda_3(y_2v+2w_2) = \lambda_3(y_1v+2w_1)=0 .
\]
Besides
\begin{equation}
\label{Eq lambda3}
\lambda_3(t)=e^{V(t)}\lambda_3(0) ,
\end{equation}
and if $\lambda_3(t)$ vanishes at a point, then $ \lambda_3\equiv 0 $  and by
 \eqref{Z016} $\lambda_1 \equiv \lambda_2 \equiv 0, \ \lambda_4 <0$ and $v \equiv 0$, meaning that the end-point condition $V(T)=1$ can not be achieved.

If    $y_2v+2w_2 = y_1v+2w_1 \equiv 0 $, then  $1=v^2+w_1^2+w_2^2=v^2(1+y_1^2/4+y_2^2/4) $ and
$v= \frac{2}{\sqrt{4+y_1^2+y_2^2}}$ is absolutely continuous with
\[
\frac{dv}{dt}=
2 \frac{y_1^2+y_2^2}{\left( 4+y_1^2+y_2^2 \right)^2} =
\frac 1 2 v^2(1-v^2) .
\]
Besides $v(0)= \frac{2}{\sqrt{4+(y_1(0))^2+(y_2(0))^2}}=1$, and hence $v(t)\equiv 1, w_1(t)=w_2(t) \equiv 0$, which results in a trivial trajectory $y \equiv 0$
\end{proof}

Now, consider an extremal $(y,\lambda)=(y_1,y_2,y_3,\lambda_1,\lambda_2,\lambda_3,\lambda_4)$ such that $H\equiv 1$, $y(0)=0$ and $y(T)=(0,0, C)$,  $C>1$.
The extremal controls are
\begin{equation}
\label{Eq example optimal controls}
v=\max (0,\lambda_4-\lambda_3 y_3), \qquad
w_1= \lambda_1+2\lambda_3y_2, \qquad
w_2= \lambda_2-2\lambda_3y_1 .
\end{equation}

We will use the following three lemmata.

\begin{lemma}
\label{L decreasing v}
For the imposed boundary conditions, the extremal control $v(\cdot)$ is monotonously decreasing and $0<\lambda_4 \leq 1$. $\square$
\end{lemma}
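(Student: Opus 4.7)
The plan is to extract the sign of $\dot v$ from a bilinear identity relating state and adjoint along the extremal, and then to read off the bounds on $\lambda_4$ from $y(0)=0$ together with $H\equiv 1$. The key claim will be
$$\Phi(t) := y_2(t)w_1(t) - y_1(t)w_2(t) \;=\; 2\lambda_3(t)\bigl(y_1(t)^2+y_2(t)^2\bigr),$$
which makes nonpositivity of $\dot v$ transparent.

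To prove this identity, I would use $\dot y_i = w_i$ for $i=1,2$ (since $f_1=f_2=0$), together with the expressions \eqref{Eq example optimal controls} for $w_1,w_2$ and the adjoint equations \eqref{Eq dynamic adjoint vector}, to compute $\dot w_1 = 4\lambda_3 w_2 + 2\lambda_3 v y_2$ and $\dot w_2 = -4\lambda_3 w_1 - 2\lambda_3 v y_1$. Substituting into the time derivative of $\Phi$ yields
$$\dot\Phi \;=\; 4\lambda_3(y_1 w_1 + y_2 w_2) + 2\lambda_3 v(y_1^2+y_2^2) \;=\; 2\lambda_3\dot\rho + 2\dot\lambda_3\rho \;=\; 2\tfrac{d}{dt}(\lambda_3\rho),$$
where $\rho := y_1^2 + y_2^2$ and the middle equality uses $\dot\lambda_3 = \lambda_3 v$. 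Since $y(0)=0$ gives $\Phi(0)=\rho(0)=0$, integration produces $\Phi = 2\lambda_3\rho$.

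Now set $\psi := \lambda_4 - \lambda_3 y_3$, so that $v=\max(0,\psi)$ by \eqref{Eq example optimal controls}. Using $\dot\lambda_3=\lambda_3 v$ and $\dot y_3 = -y_3 v + 2\Phi$ (from $f_3(y)=-y_3$), together with the identity just obtained, one finds
$$\dot\psi \;=\; -\dot\lambda_3 y_3 - \lambda_3\dot y_3 \;=\; -2\lambda_3\Phi \;=\; -4\lambda_3^2\rho \;\leq\; 0$$
on both $\{v>0\}$ and $\{v=0\}$. Hence $\psi$ is nonincreasing on $[0,T]$, and so is $v=\max(0,\psi)$. For the bounds on $\lambda_4$: the condition $y(0)=0$ forces $\psi(0)=\lambda_4$, hence $v(0)=\max(0,\lambda_4)$; the end-point constraint $V(T)=\int_0^T v\,dt = 1$ rules out $v\equiv 0$, and monotonicity of $v$ then forces $v(0)>0$, i.e.\ $\lambda_4>0$. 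Finally, evaluating $H\equiv 1$ at $t=0$ with the optimal controls \eqref{Eq example optimal controls} gives $v(0)^2+w_1(0)^2+w_2(0)^2=1$, so in particular $v(0)=\lambda_4\leq 1$. The only non-routine step is spotting the identity $\Phi=2\lambda_3\rho$; once it is in hand, both assertions of the lemma follow immediately, and I expect the same identity to be useful in the subsequent analysis of the jump structure of the optimal trajectory.
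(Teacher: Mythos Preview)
Your proof is correct and follows essentially the same route as the paper. Your key identity $\Phi = y_2 w_1 - y_1 w_2 = 2\lambda_3(y_1^2+y_2^2)$ is algebraically equivalent to the paper's conserved quantity $\lambda_1 y_2 - \lambda_2 y_1 \equiv 0$ (substitute $w_i$ from \eqref{Eq example optimal controls} to see this), and both arguments then obtain $\dot\psi = -4\lambda_3^2(y_1^2+y_2^2)\le 0$; the paper reaches the conserved quantity slightly more directly by differentiating $\lambda_1 y_2 - \lambda_2 y_1$ itself rather than going through $\dot w_i$.
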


\begin{proof}
Let $\sigma_v(t)=\lambda_4-\lambda_3(t) y_3(t)$ be the switching function, which determines $v(t)$ along the extremal.
From \eqref{Eq example optimal controls}  and the dynamics \eqref{Eq example reduced dynamics}, \eqref{Eq dynamic adjoint vector}, we have
\begin{align*}
\frac{d}{dt}\left( \lambda_1y_2-\lambda_2y_1\right) =
2 \lambda_3w_2y_1 +\lambda_1w_2 +2\lambda_3w_1y_1-\lambda_2w_1 =
w_1w_2-w_2w_1=0 ,
\end{align*}
and therefore
\begin{equation}
\label{Eq conserved quantity}
\lambda_1y_2-\lambda_2y_1 \equiv 0
\end{equation}
along any extremal trajectory with $y(0)=0$.
Further,
\begin{align*}
\frac{d}{dt}(\lambda_3y_3) = &
\lambda_3vy_3 + \lambda_3(-y_3v+2y_2w_1-2y_1w_2) =
\\ = &
2\lambda_3(\lambda_1y_2-\lambda_2y_1)+4\lambda_3^2(y_1^2+y_2^2) =
4\lambda_3^2(y_1^2+y_2^2) \geq 0.
\end{align*}
Therefore,
$\frac{d}{dt}(\lambda_4 - \lambda_3 y_3) = -\frac{d}{dt}(\lambda_3y_3) \leq 0$ and hence the extremal control $v$ is a monotonically decreasing function. Since the boundary condition \eqref{Eq example boundary conditions} requires $v$ to be positive in some interval, we see that $\lambda_4>0$. The equality $H \equiv 1$ implies $\lambda_4 = \lambda_4- \lambda_3(0) y_3(0) \leq 1$.
\end{proof}

\begin{lemma}
\label{L rescaling}
The trajectories of the system
\[
\dot y_1=w_1, \qquad
\dot y_2=w_2, \qquad
\dot y_3=-y_3v+2y_2w_1-2y_1w_2, \qquad
y(0)=0 ,
\]
 are invariant with respect to the dilation
 $$(v,w_1,w_2,y_1,y_2,y_3) \to (v,\eta w_1, \eta w_2, \eta y_1,\eta y_2,\eta^2 y_3). \ \square $$
 \end{lemma}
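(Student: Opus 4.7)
The plan is to verify the invariance by direct substitution, checking that if $(y_1,y_2,y_3)$ solves the system with controls $(v,w_1,w_2)$ and initial condition $y(0)=0$, then the rescaled curve $(\eta y_1, \eta y_2, \eta^2 y_3)$ solves the system with the rescaled controls $(v, \eta w_1, \eta w_2)$ and the same zero initial condition.

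First I would observe that the zero initial condition is preserved trivially under any such rescaling. Then I would compute the time derivatives component by component: the first two equations are manifestly linear in $(y,w)$ with weight $1$, so
\[
\frac{d}{dt}(\eta y_1) = \eta \dot y_1 = \eta w_1, \qquad \frac{d}{dt}(\eta y_2) = \eta \dot y_2 = \eta w_2,
\]
matching the rescaled controls. The third equation is the only nontrivial check: one computes
\[
\frac{d}{dt}(\eta^2 y_3) = \eta^2\bigl(-y_3 v + 2 y_2 w_1 - 2 y_1 w_2\bigr)
= -(\eta^2 y_3) v + 2 (\eta y_2)(\eta w_1) - 2 (\eta y_1)(\eta w_2),
\]
so all terms carry matching weight $\eta^2$. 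Hence the rescaled triple satisfies the same ODE with the rescaled controls, which is exactly the claimed invariance.

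There is essentially no obstacle: the lemma is a homogeneity/scaling statement, and the key point is simply the consistent weighting $(y_1,y_2,w_1,w_2)$ of degree $1$ together with $y_3$ of degree $2$, which matches the fact that $y_3$ arises via a bilinear expression in $(y_1,y_2)$ and $(w_1,w_2)$ minus $y_3$ multiplied by the control $v$ of degree $0$. I would conclude by noting that this dilation is precisely the canonical Heisenberg dilation lifted to include the drift term $-y_3 v$, which is compatible because the drift is linear in $y_3$ alone.
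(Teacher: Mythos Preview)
Your proof is correct and is precisely the ``direct verification'' that the paper itself invokes without writing out the details. You have simply made explicit the component-by-component check that the weights match, which is all that is needed here.
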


\begin{proof}
A direct verification.
\end{proof}

\begin{lemma} \label{L attainable set bounded}
For each $T \geq 0$ the attainable set of the system  \eqref{Eq example reduced dynamics} is bounded; in particular the time $T_C$, needed to attain the point $(0,0,C)$, grows to $+\infty$ as $C \to +\infty . \ \square$.
\end{lemma}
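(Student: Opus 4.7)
The plan is to derive component-wise a priori bounds for trajectories of \eqref{Eq example reduced dynamics} restricted to the time interval $[0,T]$, using in an essential way that the control constraint \eqref{Eq example control values} imposes $|w_1|,|w_2|\le 1$ and $v\ge 0$, and that the drift coefficient in the $y_3$-equation is precisely $-v\le 0$ (a dissipative term when $v\ge 0$).

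First I would handle $y_1$ and $y_2$. Since $\dot y_1=w_1$, $\dot y_2=w_2$ with $|w_i|\le 1$ and $y_1(0)=y_2(0)=0$, integration gives $|y_1(t)|,|y_2(t)|\le t$ for every $t\in[0,T]$. Next I would analyze $y_3$, which satisfies
\begin{equation*}
\dot y_3+v(t)\,y_3 = 2y_2 w_1-2y_1 w_2,\qquad y_3(0)=0.
\end{equation*}
Setting $V(t)=\int_0^t v(s)\,ds$ and using the integrating factor $e^{V(t)}$, the variation-of-constants formula yields
\begin{equation*}
y_3(t)=\int_0^t e^{V(s)-V(t)}\bigl(2y_2(s)w_1(s)-2y_1(s)w_2(s)\bigr)\,ds.
\end{equation*}
Since $V$ is nondecreasing one has $e^{V(s)-V(t)}\le 1$ for $s\le t$, and the bounds from the first step give $|2y_2 w_1-2y_1 w_2|\le 4s$, so that $|y_3(t)|\le 2t^2$ for all $t\in[0,T]$. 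Combining the three estimates shows that the reachable set by time $T$ is contained in the compact set $\{|y_1|,|y_2|\le T,\ |y_3|\le 2T^2\}$, proving the first assertion.

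For the second assertion, if $T_C$ is any time at which the point $(0,0,C)$ is attained, then by the bound just derived one has $C=|y_3(T_C)|\le 2T_C^2$, whence $T_C\ge\sqrt{C/2}\to+\infty$ as $C\to+\infty$. There is no real obstacle here: the only subtle point is the observation that the sign of the coefficient of $y_3$ in the drift is favourable, so that no Gronwall-type exponential blow-up occurs and the polynomial-in-$T$ bound for $y_3$ suffices.
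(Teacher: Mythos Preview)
Your proof is correct. The paper's own proof is a one-liner: it simply observes that the right-hand side of the system is bounded by a linear function of $|y|$, which immediately gives boundedness of the attainable set on bounded time intervals via a standard Gronwall estimate.

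Your approach is more explicit and somewhat different in spirit: rather than appealing to Gronwall (which would yield an exponential-in-$T$ bound on $|y|$), you exploit the particular structure of the system---the decoupling of $y_1,y_2$ from $y_3$ and the dissipative sign of the $-vy_3$ term---to obtain the sharper polynomial bounds $|y_1|,|y_2|\le t$ and $|y_3|\le 2t^2$. This buys you an explicit lower bound $T_C\ge\sqrt{C/2}$ for the second assertion, whereas the paper's Gronwall route only gives an implicit (logarithmic) lower bound. It is worth noting that the paper itself later uses the estimate $|\hat y_3(t)|\le 2t^2$ without justification; your argument supplies exactly that.
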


\begin{proof}
Notice that the right-hand side of \eqref{Eq example reduced dynamics} is bounded by a linear function of $|y|$.
\end{proof}

Now, let $(\hat y,\hat \lambda)$ be an extremal satisfying the boundary conditions \eqref{Eq example boundary conditions}, and let
$(\hat v,\hat w) =(\hat v,\hat w_1,\hat w_2) $ be the corresponding extremal control.
Assume the extremal value of the functional to be  $\hat T$ and   $\hat v(t)>0$ on  $[0, \hat T[$.
We will  prove that if  $C>0$  is large enough, the  extremal cannot be optimal.

We proceed by showing that there is a control $(v,w)$ with $v\geq 0$ such that the corresponding trajectory of \eqref{Eq example reduced dynamics} satisfies $V(\hat T)=1$, $y(\hat T) =(0,0,C) $, and
\[
\int_0^{\hat T} \sqrt{v^2+w_1^2+w_2^2} dt < \hat T .
\]
This inequality requires that $v^2+w_1^2+w_2^2\not \equiv 1$, but the Propositions \ref{P parameterization invariance of trajectories} and \ref{P parameterization invariance of cost} show that $(v,w)$ can be transformed by a time reparameterization into a control satisfying \eqref{Eq example control values} and \eqref{Eq example boundary conditions} for  $T = \int_0^{\hat T} \sqrt{v^2+w_1^2+w_2^2} dt$ and therefore $\hat T$ is not minimal.

For each $\varepsilon \in ]0 , \hat T [$, let $a= \int_{\hat T-\varepsilon}^{\hat T} \hat v dt $. We avoid the notation $a(\varepsilon)$, but keep the dependence on $\varepsilon$ in mind. In particular, $0 < a \leq  \varepsilon$.

Fix $\varepsilon$ and consider the modified control $\tilde v$, defined as
\[
\tilde v(t) =
\left\{ \begin{array}{ll}
\hat v(t)+1, & t \in [0,a], \smallskip \\
\hat v(t), & t \in [a,\hat T- \varepsilon [, \smallskip
\\
0, & t \in [\hat T- \varepsilon, \hat T],
\end{array} \right.
\]
and let $\tilde y$ be the trajectory of the system \eqref{Eq example reduced dynamics} for the control $(\tilde v, \hat w)$.

Then, $\tilde V(\hat T) = \hat V(\hat T) =1$, $\tilde y_1 \equiv y_1$, and $\tilde y_2\equiv y_2$. Further, for any $t \geq 0$:
\begin{align*}
\tilde y_3(t) - \hat y(t) = &
\int_0^t \hat y_3 v - \tilde y_3 \tilde v d \tau =
\int_0^t -(\tilde y_3 - \hat y_3) \tilde v + \hat y_3 (\hat v - \tilde v ) d \tau .
\end{align*}
It follows that
\begin{align*}
\tilde y_3(\hat T) - \hat y(\hat T) =
e^{-1}\int_0^{\hat T} e^{\tilde V(\tau)}\hat y_3(\hat v - \tilde v) d \tau =
- \int_0^a e^{\tilde V(\tau)-1}\hat y_3 d \tau +
\int_{\hat T - \varepsilon}^{\hat T} \hat y_3 \hat v d \tau .
\end{align*}
Since $|\hat y_3(t)| \leq 2t^2$ and $\lim\limits_{t \rightarrow \hat T} \hat y_3(t) =C$, there is a constant $k \in ]0,+\infty[$ such that
\begin{align}
\label{Z022}
\tilde y_3(\hat T) > C(1+a-k\varepsilon a) ,
\end{align}
for every $C \in ]0,+\infty[$ and every sufficiently small $\varepsilon >0$.

Let $\eta = \sqrt{\frac{C}{\tilde y_3(\hat T)}}$.
Due to Lemma \ref{L rescaling}, the control $(v,w) = ( \tilde v, \eta \hat w_1, \eta \hat w_2) $  satisfies the boundary conditions \eqref{Eq example boundary conditions}, and we estimate the functional
\begin{align*}
&
\int_0^{\hat T} \sqrt{v^2+w_1^2+w_2^2} dt =
\\ = &
\int_0^a \sqrt{(1+\hat v)^2 + \eta^2 (1- \hat v^2)} dt +
\int_a^{\hat T-\varepsilon} \sqrt{ \hat v ^2 + \eta^2 (1- \hat v^2)} dt +
\int_{\hat T-\varepsilon}^{\hat T} \sqrt{ \eta^2 (1- \hat v^2)} dt \leq
\\ \leq &
\int_0^{\hat T} \sqrt{ \hat v ^2 + \eta^2 (1- \hat v^2)} dt +
\int_0^a \sqrt{ 1+2\hat v + \hat v^2 + \eta^2 (1- \hat v^2)} -  \sqrt{ \hat v ^2 + \eta^2 (1- \hat v^2)} dt .
\end{align*}
Since $1+\hat v \leq 3$, the second integral is bounded by $\sqrt 3 a$ and therefore
\begin{align*}
&
\int_0^{\hat T} \sqrt{v^2+w_1^2+w_2^2} dt \leq
\int_0^{\hat T} \sqrt{1-(1-\eta^2)(1-\hat v^2)} dt + \sqrt 3 a \leq
\\ \leq &
\int_0^{\hat T} 1-\frac{1-\eta^2}2(1-\hat v^2) dt + \sqrt 3 a \leq
\hat T - \frac{1-\eta^2}2\int_0^{\hat T} 1 - \hat v dt + \sqrt 3 a =
\\ = &
\hat T - \frac{1-\eta^2}2 (\hat T- 1) + \sqrt 3 a .
\end{align*}
Since \eqref{Z022} implies
$ 1- \eta^2 > \frac{1-k\varepsilon}{1+(1-k\varepsilon)a} a $,
the estimate above yields
\begin{align*}
&
\int_0^{\hat T} \sqrt{v^2+w_1^2+w_2^2} dt <
\hat T - \left( \frac{\hat T -1}2 \frac{1-k\varepsilon}{1+(1-k\varepsilon)a} - \sqrt 3 \right) a < \hat T ,
\end{align*}
provided $\varepsilon >0$ is sufficiently small and $\hat T > 1 +2 \sqrt 3$.
Due to Lemma \ref{L attainable set bounded}, this last condition holds for every sufficiently large $C>0$. For such $C$ no extremal satisfying $\hat v >0$ in $[0,\hat T[ $ can be optimal.

\section{Appendix: proofs of technical results}
\label{S appendix}

\subsection{Proof of Lemma~\ref{L orientation}}
\label{SP L orientation}

\begin{proof}
Suppose that \eqref{Eq orientation} holds and pick a sequence $\{ \beta_i \in \mathcal T \}_{i \in \mathbb N}$ such that $$\lim\limits_{i \rightarrow \infty} \left\| g_1-g_2 \circ \beta _i \right\|_{L_\infty[0,+\infty [} = 0.$$

For each $i \in \mathbb N$, let $\alpha_{1,i}$ denote the inverse function of $t \mapsto t + \beta_i(t)$, and let $\alpha_{2,i}= \beta_i \circ \alpha_{1,i} $.
Since
\[
\dot \alpha_{1,i} = \frac{1}{1+\dot \beta_i \circ \alpha_{1,i} } , \qquad
\dot \alpha_{2,i} = \frac{\dot \beta_i \circ \alpha_{1,i}}{1+\dot \beta_i \circ \alpha_{1,i} },
\]
the sequence $(\alpha_{1,i}, \alpha_{2,i} )$ is uniformly bounded and equicontinuous in compact intervals. Due to the Ascoli-Arzel\`a theorem, it admits a subsequence converging uniformly in compact intervals towards some absolutely continuous nondecreasing functions $(\alpha_1, \alpha_2)$. Due to continuity of $g_1,g_2$, $(\alpha_1, \alpha_2)$ satisfy {\bf (a)}.

Since $\alpha_{1,i}+\alpha_{2,i}=Id$, it follows that $\alpha_1+\alpha_2=Id$ and therefore {\bf (b)} holds.

Suppose that $ \alpha_1(\infty ) = T < +\infty $. Due to continuity of $g_1$, $g_1(T^-)= g_1(T)$. For any $t>T$, and any $i \in \mathbb N$:
\begin{align*}
&
\left| g_1(T)- g_1(t) \right| =
\left| g_1(T)- g_1\circ \alpha_{1,i}\left( \alpha_{1,i}^{-1}(t)\right) \right| \leq
\\ \leq &
\left| g_1(T)- g_2\circ \alpha_{2,i}\left( \alpha_{1,i}^{-1}(t)\right) \right| +
\left| g_2\circ \alpha_{2,i}\left( \alpha_{1,i}^{-1}(t)\right)- g_1\circ \alpha_{1,i}\left( \alpha_{1,i}^{-1}(t)\right) \right| \leq
\\ \leq &
\left| g_1(T)- g_2\circ \alpha_{2,i}\left( \alpha_{1,i}^{-1}(t)\right) \right| +
\left\| g_2\circ \alpha_{2,i} - g_1\circ \alpha_{1,i} \right\|_{L_\infty[0,+\infty[} .
\end{align*}
By assumption, $\lim\limits_{i \rightarrow \infty} \alpha_{1,i}^{-1}(t) = + \infty$ and therefore $\lim\limits_{i \rightarrow \infty} \alpha_{2,i} \left( \alpha_{1,i}^{-1}(t)\right) = + \infty$. Since the condition {\bf (a)} implies that $\lim\limits_{s \rightarrow + \infty} g_2(s) = g_1(T)$, {\bf (c)} holds.

Now, suppose there are $\alpha_1, \alpha_2$ satisfying {\bf (a)}, {\bf (b)}, and {\bf (c)}.

First, consider the case where $\alpha_1([0,+\infty[ ) = \alpha_2([0,+\infty[ ) =[0,+\infty[ $.
Then, there is a sequence  $\{T_j \}_{j \in \mathbb N}$ such that
\[
\lim T_j = +\infty, \quad \text{and}  \quad  \alpha_i(T_j) < \alpha_i(T_{j+1}) \ \ \forall j \in \mathbb N, \ i =1,2 .
\]
For any sequence $\{ \varepsilon_j \in]0,1[ \}_{j \in \mathbb N}$, the functions
\begin{align*}
\alpha_i^\varepsilon (t) = \sum_{j=1}^\infty \Bigg( &
\alpha_i(T_{j-1}) +
(1-\varepsilon_j) (\alpha_i(t) - \alpha_i(T_{j-1}) ) +
\\ & +
\varepsilon_j \frac{\alpha_i(T_j) - \alpha_i(T_{j-1}) }{T_j-T_{j-1}}(t-T_{j-1})
\Bigg)
\chi_{[T_{j-1},T_j[}(t)
\end{align*}
belong to $\mathcal T$ and therefore $\alpha_2^\varepsilon \circ \left( \alpha_1^\varepsilon \right)^{-1} \in \mathcal T$. Also,
\[
\left| \alpha_i^\varepsilon (t) - \alpha_i (t) \right| \leq
\sum_{j=1}^\infty
\varepsilon_j \left| \alpha_i(T_j) - \alpha_i(T_{j-1})  \right| \chi_{[T_{j-1},T_j[}(t) \qquad
\forall t \geq 0 .
\]
Since $g_1, g_2$ are uniformly continuous in compact intervals, for every $\delta >0$ there is some sequence $\{ \varepsilon_j \in]0,1[ \}_{j \in \mathbb N}$ such that
$
\left\| g_1 \circ \alpha_1^\varepsilon - g_2 \circ \alpha_2^\varepsilon \right\|_{L_\infty[0,+\infty[} < \delta $.
Since
$
\left\| g_1 \circ \alpha_1^\varepsilon - g_2 \circ \alpha_2^\varepsilon \right\|_{L_\infty[0,+\infty[} =
\left\| g_1 - g_2 \circ \alpha_2^\varepsilon \circ \left(\alpha_1^\varepsilon \right)^{-1}  \right\|_{L_\infty[0,+\infty[}
$, we see that \eqref{Eq orientation} holds.

In the case where $\alpha_1([0,+\infty[ ) =[0,+\infty[ $ and $ \alpha_2(\infty ) = T < +\infty$, there is a sequence  $\{T_j \}_{j \in \mathbb N}$ such that
\[
\lim T_j = +\infty, \quad \text{and}  \quad  \alpha_1(T_j) < \alpha_1(T_{j+1}) \ \ \forall j \in \mathbb N.
\]
Then we can apply a similar argument to the functions
\begin{align*}
\alpha_1^\varepsilon (t) = \sum_{j=1}^\infty \Bigg( &
\alpha_1(T_{j-1}) +
(1-\varepsilon_j) (\alpha_1(t) - \alpha_1(T_{j-1}) ) +
\\ & +
\varepsilon_j \frac{\alpha_1(T_j) - \alpha_1(T_{j-1}) }{T_j-T_{j-1}}(t-T_{j-1})
\Bigg)
\chi_{[T_{j-1},T_j[}(t), \\
\alpha_2^\varepsilon (t) = \alpha_2(t) &+ \varepsilon_1 t ,
\end{align*}
and this completes the proof.
\end{proof}

\subsection{Proof of Lemma~\ref{L AC reparameterization}}
\label{SP L AC reparameterization}

\begin{proof}
To prove {\bf (a)}:
Pick $t \in  \left] \alpha(0), \alpha(\infty) \right[$, and let $\hat \theta = \alpha^\#(t)$.
By continuity of $\alpha$, there is some $s \in ]0,+\infty[$ such that $t=\alpha(s)$, and $\hat \theta= \max \left\{ \theta: \alpha(\theta) = \alpha (s) \right\}$, that is, $\alpha (\hat \theta) = \alpha (s) =t$.
The equality $\dot \alpha \circ \alpha^\#(t) =0$ reduces to $\dot \alpha (\hat \theta) =0$.
Therefore, $\dot \alpha \circ \alpha^\#(t) =0$ implies $t \in \alpha \left(\left\{ \theta: \dot \alpha =0 \right\} \right)$.
Since this set has zero Lebesgue measure, we proved {\bf (a)}.

To prove {\bf (b)} and {\bf (c)}:
Let $A= \left\{ t: \dot \alpha (t) = 0, \ \dot g (t) \neq 0 \right\}$, and let $\mu$ denote the Lebesgue measure.

For each $\varepsilon >0$ there is a sequence of intervals $\left\{ ]a_i,b_i[ \right\}_{i \in \mathbb N}$ such that
\[
A \subset \bigcup_{i=1}^\infty]a_i,b_i[, \qquad
\sum_{i=1}^\infty (b_i-a_i) < \mu(A)+\varepsilon , \qquad
\sum_{i=1}^\infty \left(\alpha(b_i)-\alpha(a_i) \right) < \varepsilon .
\]
Fix $\varepsilon$ and a sequence as above and let
\[
t_i= \alpha (b_i), \quad
s_i=\alpha(a_i) - \frac{\varepsilon}{2^i}, \qquad
i \in \mathbb N .
\]
Notice that $\sum\limits_{i=1}^\infty (t_i-s_i) < 2 \varepsilon $ and $\alpha^\#(t_i) \geq b_i$, $\alpha^\#(s_i)< a_i$ for every $i \in \mathbb N$.
Therefore,
\begin{align*}
\sum_{i=1}^\infty \int_{\alpha^\#(s_i)}^{\alpha^\#(t_i)} | \dot g(s)| ds \geq
\sum_{i=1}^\infty \int_{a_i}^{b_i} | \dot g(s)| ds \geq
\int_A | \dot g(s)| ds .
\end{align*}
Thus, $g \circ \alpha^\#$ cannot be absolutely continuous when $\mu (A)>0$.

Now, suppose that $\mu (A) = 0$. In order to prove that $g \circ \alpha^\#$ is absolutely continuous and satisfies \eqref{Eq derivative of reparameterized curve}, we only need to consider the case where $g$ is scalar. Taking the decomposition $g=g^+-g^-$, where $g^+(t) = g(0) + \int_0^t \max \left(0, \dot g(s)\right) ds$ and $g^-(t) = \int_0^t \max \left(0, - \dot g(s)\right) ds$, we only need to consider the case where $g:[0,+\infty[ \mapsto \mathbb R $ and $\dot g (s) \geq 0 $ for a.e. $s \geq 0$.

Fix $t_1< t_2$ with $t_1 \geq 0$, $t_2 < \alpha(\infty) $, and fix $T \in ] \alpha^{\#}(t_2), + \infty[$.
For each $i \in \mathbb N$, let
\[
\alpha_i(s) = \sup\left\{ \alpha (\tilde s) + \frac{s-\tilde s}{i} : \tilde s \in [0,s] \right\} \qquad \forall s \in [0,T] ,
\]
and let $B_i= \left\{ s \in [0,S], \ \alpha(s) < \alpha_i(s) \right\} $.

Since $\dot g (s) = 0$ for a.e. $s \in \left[  \alpha^{\#}(t^-) ,  \alpha^{\#}(t) \right]$, and $\lim\limits_{i \rightarrow \infty} \alpha_i^{-1}(t) = \alpha^{\#}(t^-)$, we have
\begin{align*}
 & g \circ \alpha^{\#}(t_2) - g \circ \alpha^{\#}(t_1)= \int_{ \alpha^{\#}(t_1^-)}^{ \alpha^{\#}(t_2^-)} \dot g ds =
\lim_{i \rightarrow \infty} \int_{ \alpha_i^{-1}(t_1)}^{ \alpha_i^{-1}(t_2)} \dot g ds = \\
= &
\lim_{i \rightarrow \infty} \left(
 \int_{ [\alpha_i^{-1}(t_1), \alpha_i^{-1}(t_2)] \setminus B_i} \dot g ds +
  \int_{ [\alpha_i^{-1}(t_1), \alpha_i^{-1}(t_2)] \cap B_i} \dot g ds
\right) .
\end{align*}
Since $ \bigcap\limits _{i \in \mathbb N} B_i \setminus \{ \dot \alpha =0 \} $ is a set of zero Lebesgue measure, it follows that
\begin{align*}
 & g \circ \alpha^{\#}(t_2) - g \circ \alpha^{\#}(t_1)=
\lim_{i \rightarrow \infty}
\int_{ [\alpha_i^{-1}(t_1), \alpha_i^{-1}(t_2)] \setminus B_i} \dot g ds  .
\end{align*}
Notice that $\alpha_i$ is absolutely continuous and
$
\dot \alpha_i = \dot \alpha \chi_{B_i^c} + \frac 1 i \chi_{B_i} \geq \frac 1 i
$.
Therefore,
\begin{align*}
g \circ \alpha^{\#}(t_2) - g \circ \alpha^{\#}(t_1)= &
\lim_{i \rightarrow \infty}
\int_{ [t_1, t_2] \setminus \alpha_i(B_i)} \frac{\dot g}{\dot \alpha_i} \circ \alpha_i^{-1} ds  = \\
=&
\lim_{i \rightarrow \infty}
\int_{ [t_1, t_2] \setminus \alpha_i(B_i)} \frac{\dot g}{\dot \alpha} \circ \alpha^{\#} ds .
\end{align*}
Since $B_{i+1} \subset B_i$ and the set $\alpha_i(B_i)$ has Lebesgue measure no greater that $\frac T i$, the Lebesgue monotone convergence theorem guarantees that
\[
g \circ \alpha^{\#}(t_2) - g \circ \alpha^{\#}(t_1)=
\int_{t_1}^{ t_2} \frac{\dot g}{\dot \alpha} \circ \alpha^{\#} ds .
\]
Thus, $g \circ \alpha^\#$ is absolutely continuous and satisfies \eqref{Eq derivative of reparameterized curve}.

To prove {\bf (d)}:
Notice that $\alpha (s) = \alpha(t) $ for every $s \in \left[ t, \alpha^\# \circ \alpha(t) \right]$. Since the set $\{ t: \dot \alpha (t) =0, \ \dot g (t) \neq 0 \}$ has zero Lebesgue measure, we see that  $g (s) = g(t) $ for every $s \in \left[ t, \alpha^\# \circ \alpha(t) \right]$. In particular, $g\circ \alpha^\# \circ \alpha(t) = g(t)$. Thus, the result follows from Lemma \ref{L orientation}.
\end{proof}

\subsection{Proof of Proposition \ref{P lift of functions of bounded variation}}
\label{SP P lift of functions of bounded variation}

\begin{proof}
Without loss of generality, we can assume that $x$ has finite variation in the interval $[0,T]$ and it is constant in $[T, + \infty[$.

Consider a sequence of partitions of the interval $[0,T]$
\[
P_k = \left\{ 0 =t_{k,0} < t_{k,1} < \cdots < t_{k,k} =T \right\} \qquad k \in \mathbb N,
\]
such that $P_k \subset P_{k+1}$ for every $k \in \mathbb N$, and $\bigcup\limits_{k \in \mathbb N} P_k$ is dense in $[0,T]$.
Let $x_k:[0,+\infty[ \mapsto \mathbb R^n$ be the piecewise linear function interpolating the points $x(t_{k.i})$, $i=0,1, \ldots k$ and $x_k(t) =x(T) $ for every $t>T$. Then, $\left\{ (\theta_k,y_k)=\left( \ell_{(t,x_k)}^{-1}, x_k \circ \ell_{(t,x_k)}^{-1} \right) \right\}_{k \in \mathbb N} $ is a sequence in $\mathcal Y_n$.

The length of the graph of $x_k$ on the interval $[0,T]$ is
\begin{align*}
\ell_{(t,x_k)}(T)= &
\sum_{i=1}^k\sqrt{(t_i-t_{i-1})^2 + \left|x(t_i)-x(t_{i-1})\right|^2} \leq
T+{\rm V}_{[0,T]}(x) .
\end{align*}
Thus, the sequence $\left\{ \left( \theta_k,y_k \right) \right\} $ is uniformly bounded and equicontinuous on the interval $\left[0,T+{\rm V}_{[0,T]}(x) \right]$, and the Ascoli-Arzel\`a theorem guarantees that it has a subsequence converging uniformly uniformly towards some $(\theta,y) \in \mathcal Y_n$.
Without loss of generality, we assume that this subsequence is $\{ (\theta_k,y_k)\}$.

Notice that $\left\{ \theta_k^{-1}(t) \right\}$ may fail to converge towards $\theta^\#(t)$ if $t$ is a discontinuity point of $\theta^\#$.
Instead, we take the sequence
$\tilde \theta_k =
\left(
\theta_k-
\left\| \theta_k - \theta \right\|_{L_\infty[0,T+{\rm V}_{[0,T]}(x) ]}
\right)^+ $.
Notice that $\left\{ (\tilde \theta_k,y_k) \right\}$ converges uniformly towards $(\theta,y)$ and $\lim \limits_{k \rightarrow \infty} \tilde \theta _k^\#(t) = \theta^\#(t)$ for every $t \in [0,T ]$. Therefore,
\[
\lim_{k \rightarrow \infty} y_k \circ \tilde{\theta}_k^\#(t) = y \circ \theta^\# (t) \qquad \forall t \in [0,T] .
\]

Now, suppose that $x$ is continuous at the point $t \in [0,T]$.
By continuity, for every $\varepsilon>0$ there is some $\delta>0$ such that $|x(\tau )- x(t)|< \varepsilon $ for every $\tau \in \left] t - \delta, t + \delta \right[ $.
This implies $|x_k(\tau) -x(t) | < \varepsilon $ for every sufficiently large $k$ and every $\tau \in \left] t- \frac \delta 2, t+ \frac \delta 2 \right[$, because then $x_k(\tau)$ is a convex combination of points in $B_\varepsilon(x(t))$.
Thus,
\[
y \circ \theta^\# (t) =
\lim_{k \rightarrow \infty} y_k \circ \tilde \theta_k^\# (t) =
\lim_{k \rightarrow \infty} x_k\left(t+ \| \theta_k-\theta \|_{L_\infty[0,T+{\rm V}_{[0,T]}(x)]} \right) =
x(t) .
\]
\end{proof}

\subsection{Proof of Lemma~\ref{L convexity of reduced lagrangean}}
\label{SP L convexity of reduced lagrangean}

\begin{proof}
Notice that
\begin{align*}
&
L \left( y, \frac{\lambda w +(1-\lambda ) \hat w}{\lambda v +(1-\lambda ) \hat v} \right) \left( \lambda v +(1-\lambda ) \hat v \right) =
\\ = &
L \left( y,
\frac{\lambda v}{\lambda v +(1-\lambda ) \hat v} \frac w v +
\frac{(1-\lambda) \hat v}{\lambda v +(1-\lambda ) \hat v} \frac{\hat w}{\hat v}
\right) \left( \lambda v +(1-\lambda ) \hat v \right) \leq
\\ \leq &
\left(
\frac{\lambda v}{\lambda v +(1-\lambda ) \hat v} L \left( y, \frac w v \right) +
\frac{(1-\lambda) \hat v}{\lambda v +(1-\lambda ) \hat v} L \left( y, \frac{\hat w}{\hat v} \right)
\right)
\left( \lambda v +(1-\lambda ) \hat v \right) =
\\ = &
\lambda L \left( y, \frac w v \right) v +
(1-\lambda)  L \left( y, \frac{\hat w}{\hat v} \right) \hat v .
\end{align*}
Therefore, $(v,w) \mapsto L \left( y, \frac w v \right) v$ is convex.

The inequality
\begin{align*}
& \liminf_{\scriptsize \begin{array}{c}
(y,v,w) \rightarrow (\hat y, \hat v, \hat w) \\
v>0
\end{array}} L\left( y, \frac w v \right) v \leq
\\ \leq &
\liminf_{\scriptsize \begin{array}{c}
(v,w) \rightarrow (\hat v, \hat w) \\
v>0 
\end{array}} L\left( \hat y, \frac w v \right) v \leq
\liminf_{v \rightarrow \hat v, \ v>0} L\left( \hat y, \frac {\hat w} v  \right) v
\end{align*}
holds trivially. Therefore, we only need to prove that
\begin{align}
&
\limsup_{v \rightarrow \hat v, \ v>0} L\left( \hat y, \frac {\hat w} v  \right) v
\leq
\liminf_{\scriptsize \begin{array}{c}
(y,v,w) \rightarrow (\hat y, \hat v, \hat w) \\
v>0
\end{array}} L\left( y, \frac w v \right) v .
\label{Z001}
\end{align}
Due to continuity of $L$, this inequality holds for every $\hat v >0$. Suppose $\hat v =0$ and fix $b< \limsup\limits_{v \rightarrow 0^+} L \left( \hat y, \frac{\hat w}{v} \right) v $, and $\varepsilon >0$.
Then, we can pick $a \in ]0, \varepsilon]$ such that
$L \left( \hat y , \frac{ \hat w}{ a} \right) > b \frac{1}{a}$.
By continuity of $L$, there is some $\delta >0$ such that
\begin{equation}
\label{Z025}
L\left( y, \frac{w}{a} \right) > \frac{b}{a},
\quad \text{and} \quad
\left| L(y,w)- L(\hat y, \hat w) \right| < \varepsilon
\end{equation}
for every $(y,w)$ such that $|y- \hat y| < \delta$ and  $|w-\hat w| < \delta$.

Due to convexity of $w \mapsto L(y,w)$, we have
\begin{align*}
L\left( y, \frac w v \right) \geq &
L(y,w) + \frac{L\left( y, \frac{w}{a}\right) - L(y,w)}{\frac{1}{a}-1 } \left( \frac 1 v -1 \right) =
\\ = &
L(y,w) + \frac{a}{1-a} \left( L\left( y, \frac{w}{a}\right) - L(y,w)\right) \frac{1-v}{v} \qquad
\forall v \in ]0, a] .
\end{align*}
Using the estimates \eqref{Z025}, this yields
\begin{align*}
L \left( y, \frac w v \right) v \geq &
\left( L \left( \hat y, \hat w \right) - \varepsilon \right) v + \frac{a}{1-a} \left( \frac b a -L\left(\hat y, \hat w \right) - \varepsilon \right) (1-v) =
\\ = &
\frac{1-v}{1-a}b + L\left( \hat y , \hat w \right) \left( v-a \frac{1-v}{1-a} \right) - \varepsilon \left( v+a \frac{1-v}{1-a} \right) ,
\end{align*}
that is,
\[
\liminf_{\scriptsize \begin{array}{c}
(y,v,w) \rightarrow (\hat y, \hat v, \hat w) \\
v>0
\end{array}} L\left( y, \frac w v \right) v  \geq
\frac{1}{1-a}b - \left( L\left( \hat y , \hat w \right) +\varepsilon \right) \frac{a}{1-a}  .
\]
Making $\varepsilon$ tend to zero and $b$ tend to  $\limsup\limits_{v \rightarrow 0^+} L \left( \hat y, \frac{\hat w}{v} \right) v $, this implies \eqref{Z001}.
\end{proof}

\subsection{Proof of Proposition \ref{P Filippov}}
\label{SP P Filippov}

\begin{proof}
Fix  $(C,\theta,y)$, a trajectory of the differential inclusion \eqref{Eq nonparametric dynamics}, and let $V_t= \left( \dot \theta(t), \dot y(t) \right)$ for almost every $t \in [0,T]$.

For each compact set $K \subset \mathbb R^{1+k}$, consider the function $F_K:[0,T] \mapsto \overline{\mathbb R}$, defined defined almost everywhere by
\[
F_K(t) = \inf \left\{ \lambda (y(t),v,w) : (v,w) \in B^+ \cap K , \left( v,f(y(t))v+G(y(t))w \right) =V_t \right\},
\]
being understood that $\inf \emptyset = + \infty$.

First, we show that the functions $F_K$ are measurable.

For any set $A \subset \mathbb R^k$ and any $\varepsilon >0$, let $B_\varepsilon(A) = \bigcup\limits_{x \in A} B_\varepsilon(x)$.
Then, lower semicontinuity of $\lambda$ implies that for any $\alpha \in \mathbb R$,
\begin{align*}
&
F_K^{-1}\left( ]-\infty, \alpha [ \right) =
\\ = &
\left\{ t:
\exists (v,w) \in B^+ \cap K ,
\left( v,f(y(t))v+G(y(t))w \right) =V_t,
\lambda(y(t),v,w) < \alpha
\right\} =
\\ = &
\bigcap_{i \in \mathbb N}
\begin{array}[t]{l}
\Big\{ t:
\exists (v,w) \in B_{\frac 1 i} \left( B^+ \cap K \right),
\left| \left( v,f(y(t))v+G(y(t))w \right) -V_t \right|< \frac 1 i, \\
\hspace{2cm}
\lambda(y(t),v,w) < \alpha
\Big\} .
\end{array}
\end{align*}
Due to Lemma \ref{L convexity of reduced lagrangean}, this is
\begin{align*}
&
F_K^{-1}\left( ]-\infty, \alpha [ \right) =
\\ = &
\bigcap_{i \in \mathbb N}
\bigcup_{v \in \mathbb Q \cap ]0,1]}
\begin{array}[t]{l}
\Big\{ t:
\exists w \in \mathbb R^k,
(v,w) \in B_{\frac 1 i} \left( B^+ \cap K \right) , \\
\hspace{0.3cm}
\left| \left( v,f(y(t))v+G(y(t))w \right) -V_t \right|< \frac 1 i,
L\left( y(t),\frac w v \right)v < \alpha
\Big\} .
\end{array}
\end{align*}
Due to continuity of $L$, this further reduces to
\begin{align*}
&
F_K^{-1}\left( ]-\infty, \alpha [ \right) =
\\ = &
\bigcap_{i \in \mathbb N}
\bigcup_{v \in \mathbb Q \cap ]0,1]}
\bigcup_{{\scriptsize \begin{array}{c}w \in \mathbb Q^k: \\ (v,w) \in B_{\frac 1 i} \left( B^+ \cap K \right)\end{array}}}
\begin{array}[t]{l}
\Big\{ t:
\left| \left( v,f(y(t))v+G(y(t))w \right) -V_t \right|< \frac 1 i, \\
\hspace{1cm}
L\left( y(t),\frac w v \right)v < \alpha
\Big\} .
\end{array}
\end{align*}
Since $y$, $V$ are measurable and $f,G,L$ are continuous, it follows that $F_K$ is measurable.

Now, we construct a sequence $\{ \mathcal A_i \}_{ i \in \mathbb N }$ with the following properties:
\begin{itemize}
\item[{\rm (a)}]
Each $\mathcal A_i = \{ A_{i,1}, A_{i,2}, \ldots , A_{i,h_i} \}$ is a finite ordered collection of measurable subsets of $B^+ $;
\item[{\rm (b)}]
All the members of each collection $\mathcal A_i$ are pairwise disjoint, $B^+ = \bigcup \limits _{A \in \mathcal A_i} A$, and each element of $\mathcal A_i$ is contained in a ball of radius $\frac 1 i$;
\item[{\rm (c)}]
For any $i<j$, every element of $\mathcal A_j$ is a subset of some element of $\mathcal A_i$. All elements of $\mathcal A_j$ that are contained in $A_{i,h}$ precede (in the order of $\mathcal A_j$) any element of $\mathcal A_j$ contained in $A_{i,h+1}$.
\end{itemize}
To see that such sequences exist, let $\mathcal A_0=\left\{ B^+ \right\}$. For each $i \in \mathbb N$, let $\mathcal B_i = \{ B_{i,1},B_{i,2}, \ldots , B_{i,j_i} \}$, a finite cover of $B^+ $ by balls of radius $\frac 1 i$, and let
\[
C_{i,h} = B_{i,h} \setminus \bigcup_{l<h}B_{i,l}, \qquad h=1,2, \ldots , j_i .
\]
For each $i \in \mathbb N$, let $\mathcal A_i$ be the collection of intersections
\[
A \cap C_{i,h}, \qquad A \in \mathcal A_{i-1}, \quad h = 1,2, \ldots , j_i ,
\]
ordered in any way such that any $C_{i,h}\cap A_{i-1,l}$ precedes every $C_{i,s}\cap A_{i-1,l+1}$ (discard empty intersections).
So, $\{ \mathcal A_i \}_{i \in \mathbb N }$ satisfies (a)--(c).

Fix a sequence $\{ \mathcal A_i \}_{i \in \mathbb N }$ as above and for each $i \in \mathbb N$, $j \in \{ 1, 2, \ldots , j_i\}$, fix $(v,w)_{i,j}=(v_{i,j}, w_{i,j}) \in A_{i,j}$.
For each $i \in \mathbb N$, consider a function $j(i, \cdot ):[0,T] \mapsto \mathbb N$ defined almost everywhere  by
\begin{equation}
\label{Z002}
j(i,t) = \min \left\{ h\in \{ 1,2, \ldots , j_i\}: F_{\overline{A_{i,h}}}(t)=F_{B^+ }(t) \right\} ,
\end{equation}
and consider the sequence $\{(v_i,w_i): [0,T] \mapsto B^+ \}_{i \in \mathbb N}$ defined as
\[
(v_i,w_i)(t)=(v,w)_{i,j(i,t)}, \qquad i \in \mathbb N , \ t \in [0,T].
\]
Notice that $(v_i,w_i)([0,T])\subset \left\{ (v,w)_{i,j}, j \in \{ 1, 2, \ldots , j_i\} \right\}$ is a finite set and
\begin{align*}
&
\{ t: (v_i,w_i)(t)=(v,w)_{i,j} \} =
\left\{ t: F_{\overline{A_{i,j}}}(t)=F_{B^+ }(t) \right\} \setminus \bigcup_{h<j}
\left\{ t: F_{\overline{A_{i,h}}}(t)=F_{B^+ }(t) \right\} .
\end{align*}
Therefore, measurability of $F_K$ guarantees measurability of $(v_i,w_i)$.

For almost every $t \in [0,T]$, we have:
\begin{itemize}
\item[]
$\left(v_i(t),f(y(t))v_i(t) + G(y(t))w_i(t) \right)= V_t \qquad \forall i \in \mathbb N$, and
\item[]
$ \{ (v_i,w_i)(t) \}_{i \in \mathbb N} $ is a Cauchy sequence.
\end{itemize}
Thus,  $(v,w)(t)= \lim\limits_{i \rightarrow \infty} (v_i,w_i)(t)$ is a measurable function satisfying
\[
\left(v(t),f(y(t))v(t) + G(y(t))w(t) \right)= V_t \qquad \text{a.e. }t \in [0,T].
\]
Lower semicontinuity of $\lambda$ and \eqref{Z002} imply that
\begin{align*}
&
\lambda (y(t),v(t),w(t)) =
\\ = &
\inf\left\{
\lambda (y(t),\tilde v, \tilde w): (\tilde v, \tilde w) \in B^+ , \left(\tilde v, f(y(t))\tilde v +G(y(t)) \tilde w \right)=V_t
\right\} \leq
\\ \leq &
\dot C(t)
\end{align*}
for almost every $t \in [0,T]$.
\end{proof}

\section*{ACKNOWLEDGMENTS}

The research of the first coauthor has been supported by  FCT--Funda\c c\~ao para a Ci\^encia e Tecnologia (Portugal) via  strategic project PEst-OE/EGE/UI0491/2013, he is grateful to INDAM (Italy) for supporting his visit to the University of Florence in January 2014. The research of the second coauthor has been supported by MIUR (Italy) via national project (PRIN) 200894484E of MIUR (Italy); he is also grateful to CEMAPRE (Portugal) for supporting his research stay at  ISEG,   University of Lisbon in May 2013.

\end{document}